\documentclass[12pt]{article}
\usepackage{amsmath, amssymb, amsthm, tikz, subcaption, longtable, array, listings, mathtools, enumitem, lineno, setspace, adjustbox}
\usetikzlibrary{positioning,fit,calc,decorations.pathreplacing}
\usetikzlibrary{fit,backgrounds}
\usepackage[colorlinks = true]{hyperref}
\usepackage[margin = 2 cm]{geometry}
\usepackage{tocloft}

\newcolumntype{P}[1]{>{\centering\arraybackslash}p{#1}}

\title{Structural Characterizations and Algebraic Realizations of a Family of Regular Integral Graphs}
\author{Tapa Manna$^a$\thanks{Email: \texttt{mannatapa24@gmail.com}}, Supriyo Dutta$^a$\thanks{Email: \texttt{dosupriyo@gmail.com}}, Baby Bhattacharya$^a$\thanks{Email: \texttt{babybhatt75@gmail.com}}  \\ 
	$^a$ \small{Department of Mathematics, National Institute of Technology Agartala, Jirania, Tripura, India - 799046}\\ }
\date{}

\newtheorem{lemma}{\bf Lemma}[section]

\newtheorem{theorem}{\bf Theorem}[section]
\newtheorem{proposition}{\bf Proposition}[section]
\newtheorem{corollary}{\bf Corollary}[section]
\newtheorem{definition}{\bf Definition}[section]

\newtheorem{remark}{\bf Remark}[section]

\newtheorem{note}{\bf Note}[section]
\newtheorem{procedure}{\bf Procedure}[section]

\DeclareMathOperator{\diag}{diag}

\begin{document}
	
	\maketitle
	
	\begin{abstract}
		All the eigenvalues of an integral graphs are integers. Integral graphs are extremely rare. They form an asymptotically vanishing fraction $2^{-\Omega(n)}$ among all graphs on $n$ vertices. It makes the construction of a new family of integral graphs a challenging task. Also, most of the known infinite family of integral graphs rely on Cayley graphs over Abelian groups. In this article, we introduce a new family of integral graphs obtained from the groups. The construction of our graphs from groups is different from the construction of Cayley graphs. A spectral uniqueness theorem is established, which shows that each member of the infinite family is determined by its adjacency spectrum among all finite simple graphs. We also present recursive constructions that generates larger members of the family from smaller ones, providing a scalable class of integral graphs. Finally, we investigate algebraic realizations of these graphs as complements of Proper Prime Order Element Graphs of finite $2$-groups and obtain conditions characterizing such realizations. We also observe that the graphs obtained from different non-isomorphic groups have cospectral graphs.
		
		\noindent \textbf{Keywords:} Integral graph; Adjacency spectrum; Regular graph; Join of Graphs; Algebraic realization; Proper POE Complement Graph; Graphs from Groups.
	\end{abstract}

	\tableofcontents
	
	\section{Introduction}

	A graph is an integral graph if all its eigenvalues are integers. Constructing a new family of integral graphs is a challenging problem because there is no necessary and sufficient condition to characterize the integral graphs till now. It makes enlisting all integral graphs or classifying the graphs with integral spectra is difficult. The problem of finding integral graphs was first introduced in \cite{harary2006graphs}. In the recent years we observe a number of important developments in this direction \cite{bali2002nska,bussemaker1976there,stevanovic20034,stevanovic2007walks,balinska2001nonregular,so2006integral}. In this article, we illustrate a new family of graphs with integral spectra.

	There is no concrete rule to categorize graphs with integral spectra. The literature contains a plenty of methods to construct these graphs. These graphs form a restrictive but structurally rich class within spectral graph theory. Article \cite{roitman1984infinite} constructed an infinite family of integral graphs. Article \cite{wang2004integral} demonstrated the complete integral multipartite graphs. Further constructions were obtained in \cite{hic2008new}, and \cite{wang2008integral}. Integral trees are discussed in  \cite{brouwer2007integral,brouwer2008small}. The Cayley graphs act an important role in constructing integral graphs. Fundamentally, they build up a bridge between algebra and combinatorics. The integral Cayley graphs over the finite Abelian groups are discussed in \cite{klotz2010integral}. The connection between the algebraic structure of a group and the integrality of its Cayley graphs was developed by \cite{alperin2012integral}. These works suggest the importance of groups in the construction of the integral graphs \cite{abdollahi2014groups,ahmady2014integral}. In this article, we establish another connection that allows  us to construct a new family of integral graphs using the ideas from group theory. To the best of our knowledge, this is the first alternative of the idea of Cayley graphs when we construct integral graphs from the finite groups. Apart from the Cayley graphs, there are many other approaches for constructing graphs from groups, which provide important structural characteristics of the group. Important examples of graphs defined on groups include the commuting graph, power graph, enhanced power graph, generating graph, and their variants \cite{chakrabarty2009undirected,cameron2010power, aalipour2016structure,arunkumar2022super,cameron2021graphs}.
	 Prime Order Element Graphs have recently been investigated from both structural and spectral viewpoints \cite{manna2025prime,manna2024forbidden,manna2026structural}. Motivated by these studies, we consider the complement of the Proper Prime Order Element Graph of a finite group. We denote this graph by $D^*(G)$. Our main interest lies in finite $2$-groups, where the adjacency condition takes a particularly simple form and leads naturally to an infinite two-parameter family of connected regular integral graphs.
	 
	 The second part of the paper addresses the corresponding realization problem with a group-theoretic approach. Rather than merely listing examples of realizing groups, we obtain an exact characterization of those finite $2$-groups $G$ for which $D^*(G)\cong X_{h,r}.$ We then study the non-uniqueness of such realizations. In particular,
	 we exhibit non-isomorphic Abelian and non-Abelian finite $2$-groups that give rise to the same graph $X_{h,r}$ and hence to the same adjacency spectrum. Thus, although $X_{h,r}$ is determined by its adjacency spectrum, its underlying group realization need not be unique.
	
	In \autoref{forward_realization}, we prove that the graph $D^*(G)$ is isomorphic to a two parameter family of graphs, denoted by $X_{h,r}$, which has a
	structured join decomposition consisting of an independent set and $r$ number of disjoint $K_2$ graphs. Conversely, we investigate the realization problem of determining
	which members of the family $X_{h,r}$ arise as $D^*(G)$ for finite $2$-groups $G$. Now, we have the following results:
	
	\begin{enumerate}
		\item The graph $X_{h,r}$ is an integral graph with  $\Lambda (X_{h,r})
		=\left\{[rh]^1, [1-h]^r, [1]^{r\left(\frac{h}{2}-1\right)},\,
		[0]^{h-2},[-1]^{\frac{rh}{2}} \right\}.$
		
		\item Let $G$ be a finite $2$-group such that $D^*(G)\cong X_{h,r}.$ Let $n=|V(D^*(G))|=|G|-1$ and let $\rho$ denote the adjacency spectral radius of the adjacency matrix of $D^*(G)$. Then the adjacency spectrum of $D^*(G)$ determines the group-theoretic quantities $|G|$, $r_2(G)$, $|H(G)|$, and $[G:H(G)]$, where $H(G)=\{x\in G:x^2=e\}.$ More precisely, $|G|=n+1, |H(G)|=n-\rho+1, r_2(G)=n-\rho,$ and $[G:H(G)]=1+\frac{\rho}{\,n-\rho+1\,}.$
		
		\item Let $D^{*}(G)\cong X_{h,r}$.Then there exist positive integers $m$ and $d$ such that $h=2^{m} \text{ and } r=2^{d}-1.$ Equivalently, every graph in the family $X_{h,r}$ that is realizable as $D^{*}(G)$ has the form $X_{2^{m},\,2^{d}-1}.$
		
		\item For every integer $m\geq3$, the graph $X_{2^m,\,1}$
		admits both an Abelian and a non-Abelian realization. In particular, $D^{*}\left(
		\mathbb Z_4\times\mathbb Z_2^{\,m-1} \right) \cong
		D^{*}(G_{m,\theta}) \cong X_{2^m,\,1}.$
		
		\item The graph $X_{h,r}$ is determined by its adjacency spectrum.
	\end{enumerate}
	
	A further significance of the family lies in the contrast between spectral rigidity and algebraic non-uniqueness. Each graph $X_{h,r}$ is determined by its adjacency spectrum among all finite simple graphs; in other words, $X_{h,r}$ is  determined by its adjacency spectrum. On the other hand, non-isomorphic finite $2$-groups may realize the same graph $X_{h,r}$ and therefore have identical associated adjacency spectra. Thus, the family provides an algebraically motivated class of graphs that are determined by their adjacency spectra and connects the present group-theoretic construction with the spectral characterization problem studied in \cite{van2003graphs}.

	For standard terminology and results from group theory, linear algebra, and graph theory, we refer to \cite{dummitbasic}, \cite{friedberg1997linear}, \cite{west2001introduction}. For elementary definitions we recommend to check the referred textbooks.

	\section{Preliminaries}
	
	\subsection*{Group-Theory Essentials} \label{subsec:group-terminology}

	 Given a subgroup $H$ of a group $G$ the left(or right) cosets with cardinality $|H|$ are defined by $gH=\{gh: h\in H\}$ (or $Hg=\{hg: h\in H\}$) for some $g\in G$. A subgroup $H$ is a normal subgroup of $G$ if $gH=Hg \forall g\in G$, and we denote $H\trianglelefteq G$. The number of distinct left cosets of $H$ in $G$ is called the \emph{index} of $H$ in $G$ and is denoted by $ [G:H].$ For a finite group, $[G:H]=\frac{|G|}{|H|}.$ A subgroup $H\leq G$ is called \emph{characteristic} in $G$ if $\varphi(H)=H$ for every automorphism $\varphi\in\operatorname{Aut}(G).$ The quotient group of $G$  by the normal subgroup $H$ is $G/H=\{gH:g\in G\}$ under the operation $(gH)(kH)=gkH$, with order $|G/H|=[G:H].$

	 A group $G$ is called a $2$-group if $|G|=2^n$ for $n\in \mathbb{N}$. Order of any element in a $2$-group is a power of $2$. The \emph{exponent} of $G$, denoted by $\exp(G),$ is the least positive integer $m$ such that $g^m=e$ for every $g\in G$. An element $g\in G$ is called an \emph{involution} if $g\neq e, \text{ the identity element, and } g^2=e.$ The set of involutions of $G$ is $I(G)=\{x\in G:x\neq e,\ x^2=e\}$. Denote $r_2(G)=|I(G)|$. The set $H(G)=I(G)\cup \{e\}$, need not be a subgroup of $G$. Whenever $H(G)$ is a subgroup, we state the fact explicitly.

	 The \emph{center} of $G$ is the set $Z(G)=\{z\in G : zg=gz \text{ for every } g\in G\}.$

	 The \emph{direct product} of two groups $G$ and $K$, denoted by $ G\times K, $ and is defined by $\{(g,k):g\in G,\ k\in K\}$ where $ (g_1,k_1)(g_2,k_2) = (g_1g_2,k_1k_2).$ A group $E$ is called an \emph{elementary Abelian $2$-group} if $E\cong\mathbb{Z}_2 \times \mathbb{Z}_2\times \cdots \mathbb{Z}_2$ for some $m\in \mathbb{N}$.

	  An action of a group $K$ on a group $N$ by automorphisms is a homomorphism $ \theta:K\longrightarrow\operatorname{Aut}(N).$ For $k\in K$ and $n\in N$, the image of $n$ under the automorphism $\theta(k)$ is denoted by $ \theta(k)(n).$ Let $N$ and $K$ be groups, and let $\theta:K\longrightarrow\operatorname{Aut}(N)$ be a homomorphism. The \emph{semidirect product} of $N$ by $K$ with respect to $\theta$, denoted by $ N\rtimes_{\theta}K,$ is the group whose underlying set is $N\times K$ and whose operation is $ (n_1,k_1)(n_2,k_2) = \bigl(n_1\theta(k_1)(n_2),k_1k_2\bigr).$ When the action $\theta$ is clear from the context, we simply write $N\rtimes K.$ 
	  
	  The quaternion group $Q_8$ is a non-Abelian group of order $8$ given by $ Q_8 = \langle a,b: a^4=e,\ a^2=b^2,\ b^{-1}ab=a^{-1}\rangle. $ Its unique involution is $-1$ that is $a^2=b^2$.
	  
	  For two disjoint sets $A$ and $B$, we write
	  $A\dot\cup B$ for their disjoint union.

	\subsection*{Graph-Theory Essentials}\label{subsec:graph-terminology}
	
	A finite simple undirected graph is a pair $\mathcal{G}=(V(\mathcal{G}),E(\mathcal{G})),$ where $V(\mathcal{G})$ is the \emph{vertex set}, and $ E(\mathcal{G}) \subseteq \bigl\{(u,v):u,v\in V(\mathcal{G}),\ u\neq v\bigr\}$ is the \emph{edge set}. Let $V(\mathcal{G})=\{v_1,v_2,\cdots, v_n\}$ Two vertices $u$ and $v$ in $V(\mathcal{G})$ are adjacent if an edge $(u,v)\in E(\mathcal{G})$. The \emph{adjacency matrix} of $\mathcal{G}$ is the $n\times n$ matrix $A(\mathcal{G})=(a_{ij}),$ where 
	\begin{equation}\label{adjacency_equation}
		a_{ij} = \begin{cases} 1,&\text{if }v_i\sim_{\mathcal{G}}v_j,\\ 0,&\text{otherwise}. \end{cases}
	\end{equation} The \emph{adjacency spectrum} of $\mathcal{G}$ is the multiset of all eigenvalues of $A(\mathcal{G})$, counted according to their algebraic multiplicities. If the distinct eigenvalues of $\mathcal{G}$ are $ \lambda_1,\lambda_2,\ldots,\lambda_s$ with respective multiplicities $m_1,m_2,\ldots,m_s,$ then we write $\Lambda (\mathcal{G}) = \left\{ [\lambda_1]^{m_1}, [\lambda_2]^{m_2}, \ldots, [\lambda_s]^{m_s} \right\}.$ Recall that $m_1+m_2+\cdots +m_s=|V(\mathcal(G))|$.
	
	The degree of a vertex $v$ of a graph $\mathcal{G}$, denoted by $d^{\mathcal{G}}_v$ is the number of vertices adjacent to it. \emph{Degree Matrix} of a graph $\mathcal{G}$ is $D(\mathcal{G})=\diag \{d_{v_1},d_{v_2},\cdots,d_{v_n}\}$, where $d_{v_i}$ is the degree of the vertex $v_i$. Also, the Laplacian matrix is $L(\mathcal{G})=D(\mathcal{G})-A(\mathcal{G})$. Two graphs are called \emph{cospectral} if they have the same
	adjacency spectrum. A graph $G$ is said to be \emph{determined by its adjacency spectrum}, or briefly \emph{adjacency-DS}, if every graph cospectral with $G$ is isomorphic to $G$.
	
	 A graph $\mathcal{G}$ is called \emph{$k$-regular} if $ d^{\mathcal{G}}_v=k$ for every $v\in V(\mathcal{G})$. The graph with $n$ vertices is complete graph $K_n$ where every pair of distinct vertices is adjacent. A graph with $n$ vertices and no edges is called the \emph{null graph}, which is denoted by $\overline{K}_n$. 
	 
	 A subset $S\subseteq V(\mathcal{G})$ is called an \emph{independent set} if no two distinct vertices of $S$ are adjacent in $\mathcal{G}$. The \emph{subgraph of $\mathcal{G}$ induced by $S\subseteq V(\mathcal{G})$}, denoted by $\mathcal{G}[S]$, is the graph with vertex set $S$ and edge set $ E(\mathcal{G}[S]) = \bigl\{(u,v)\in E(\mathcal{G}):u,v\in S\bigr\}.$ The \emph{complement} of $\mathcal{G}$, denoted by $\overline{\mathcal{G}}$, is the graph with $ V(\overline{\mathcal{G}})=V(\mathcal{G})$ such that, for distinct vertices $u,v\in V(\mathcal{G})$, $ u\sim_{\overline{\mathcal{G}}}v \quad\Longleftrightarrow\quad u\not\sim_{\mathcal{G}}v.$ Equivalently, $ E(\overline{\mathcal{G}}) = E(K_{|V(\mathcal{G})|})\setminus E(\mathcal{G}).$

	 The \emph{join} of $\mathcal{G}_1$ and $\mathcal{G}_2$, denoted by $ \mathcal{G}_1\vee\mathcal{G}_2,$ has the vertex set $V(\mathcal{G}_1\cup\mathcal{G}_2)$ and edge set $ E(\mathcal{G}_1\vee\mathcal{G}_2) = E(\mathcal{G}_1)\cup E(\mathcal{G}_2) \cup \bigl\{(u,v):u\in V(\mathcal{G}_1), v\in V(\mathcal{G}_2)\bigr\}.$ More generally, for pairwise vertex-disjoint graphs $\mathcal{G}_1,\ldots,\mathcal{G}_r$, the notation $\bigvee_{i=1}^{r}\mathcal{G}_i$ denotes their successive join. 
	 
	  Two graphs $\mathcal{G}_1$ and $\mathcal{G}_2$ are said to be isomorphic if there is a bijection, in other words a graph isomorphism $\varphi:V(\mathcal{G}_1)\longrightarrow V(\mathcal{G}_2)$ such that $ u\sim_{\mathcal{G}_1}v \Longleftrightarrow \varphi(u)\sim_{\mathcal{G}_2}\varphi(v)$ for every pair of distinct vertices $u,v\in V(\mathcal{G}_1)$. We write $\mathcal(G)_1\cong \mathcal(G)_2$.

	 \begin{lemma}\label{Rayleigh_quotient}\cite{friedberg1997linear}
	 	Let $A$ be a real symmetric matrix with eigenvalues
	 	$\lambda_1\geq \lambda_2\geq\cdots\geq\lambda_n.$
	 	Then $\lambda_n \leq \frac{x^{T}Ax}{x^{T}x}\leq\lambda_1$
	 	for every nonzero vector $x\in\mathbb{R}^n$. Moreover, $\lambda_1 = \max_{x\neq 0} \frac{x^{T}Ax}{x^{T}x},\quad
	 	\lambda_n = \min_{x\neq 0} \frac{x^{T}Ax}{x^{T}x}.$
	 	Equality holds if and only if $x$ is an eigenvector corresponding to the respective extremal eigenvalue.
	 \end{lemma}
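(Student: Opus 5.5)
The statement is the Rayleigh quotient bound for a real symmetric matrix $A$, and the plan is to diagonalize $A$ by the spectral theorem. Fix an orthonormal basis of eigenvectors $u_1,\dots,u_n$ of $\mathbb{R}^n$ with $Au_i=\lambda_i u_i$; this exists because $A$ is real symmetric. Expand an arbitrary nonzero $x$ as $x=\sum_i c_i u_i$, with $c_i=u_i^{T}x$ and not all $c_i$ zero. Orthonormality then gives
\[
x^{T}x=\sum_{i=1}^{n} c_i^2, \qquad x^{T}Ax=\sum_{i=1}^{n}\lambda_i c_i^2 .
\]
So the quotient $\frac{x^{T}Ax}{x^{T}x}$ is a convex combination of the $\lambda_i$, with nonnegative weights $c_i^2/\sum_j c_j^2$ summing to $1$.

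The two inequalities follow directly from this. Since $\lambda_n\le\lambda_i\le\lambda_1$ for every $i$, we get $\lambda_n\sum_i c_i^2\le \sum_i\lambda_i c_i^2\le\lambda_1\sum_i c_i^2$; dividing by $\sum_i c_i^2>0$ gives the bounds. For the max and min statements, take $x=u_1$ and $x=u_n$: the quotient equals $\lambda_1$ and $\lambda_n$ respectively, so both bounds are attained.

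The equality clause is the only step needing care. Suppose the quotient equals $\lambda_1$. Then
\[
\sum_{i=1}^{n}(\lambda_1-\lambda_i)c_i^2=0,
\]
and every term is nonnegative, so each vanishes. Hence $c_i=0$ whenever $\lambda_i<\lambda_1$, which places $x$ in the span of the $u_i$ with $\lambda_i=\lambda_1$. That span is exactly the $\lambda_1$-eigenspace, so $Ax=\lambda_1 x$.

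The main point to watch is that $\lambda_1$ may be a repeated eigenvalue. The argument therefore has to conclude that $x$ lies in the whole eigenspace, not that $x$ is a multiple of a single basis vector $u_1$. Conversely, if $Ax=\lambda_1x$, then $x^{T}Ax=\lambda_1x^{T}x$ and equality holds. The case of $\lambda_n$ is the same argument with the terms $(\lambda_i-\lambda_n)c_i^2$, or equivalently it follows by applying the $\lambda_1$ case to $-A$.
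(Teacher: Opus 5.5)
Your argument is correct and complete. This includes the equality case when $\lambda_1$ or $\lambda_n$ is repeated, where you rightly conclude only that $x$ lies in the whole eigenspace. The paper gives no proof of this lemma and simply cites Friedberg--Insel--Spence; your expansion in an orthonormal eigenbasis is exactly the standard textbook argument behind that citation.
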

	 
	 \begin{lemma}\label{lem:laplacian-connectedness}\cite{west2001introduction}
	 	A finite simple graph $\mathcal{G}$ is connected iff its Laplacian matrix has exactly one zero eigenvalue.
	 \end{lemma}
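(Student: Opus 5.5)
The plan is to prove the standard fact that a finite simple graph $\mathcal{G}$ is connected if and only if $0$ is an eigenvalue of $L(\mathcal{G})$ of multiplicity exactly one. The main tool is the quadratic form of the Laplacian together with Lemma~\ref{Rayleigh_quotient}.

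First, we observe that $L=L(\mathcal{G})=D(\mathcal{G})-A(\mathcal{G})$ is real symmetric. Expanding the quadratic form gives
\[
x^{T}Lx=\sum_{(v_i,v_j)\in E(\mathcal{G})}(x_i-x_j)^2\geq 0 ,
\]
where each edge is counted once. So $L$ is positive semidefinite, and by Lemma~\ref{Rayleigh_quotient} its smallest eigenvalue is $\min_{x\neq0} x^TLx/x^Tx\geq 0$. Because $L$ is symmetric, the algebraic multiplicity of $0$ equals $\dim\ker L$. Moreover, for positive semidefinite $L$ we have $Lx=0$ if and only if $x^TLx=0$: write $L=B^TB$, or use the spectral decomposition. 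From the displayed formula, $x^TLx=0$ holds exactly when $x_i=x_j$ for every edge $(v_i,v_j)$, that is, when $x$ is constant on each connected component.

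Second, suppose $\mathcal{G}$ has components $C_1,\dots,C_k$. The indicator vectors $\mathbf{1}_{C_1},\dots,\mathbf{1}_{C_k}$ lie in $\ker L$. They are linearly independent because their supports are disjoint. By the previous step they also span $\ker L$, since a vector constant on each component is a combination of them. Hence the multiplicity of $0$ equals $k$, the number of components. In particular it is always at least $1$, because $L\mathbf{1}=0$. Therefore it equals $1$ exactly when $k=1$, which is the statement that $\mathcal{G}$ is connected.

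The only step needing care is the equivalence $x^TLx=0\iff Lx=0$ for semidefinite $L$. I would prove it by expanding $x=\sum c_i u_i$ in an orthonormal eigenbasis. Then $x^TLx=\sum \lambda_i c_i^2$ with all $\lambda_i\geq0$, so $x^TLx=0$ forces $c_i=0$ whenever $\lambda_i>0$, and hence $Lx=0$. Everything else is a routine calculation.
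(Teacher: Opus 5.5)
Your proof is correct and complete. The identity $x^{T}Lx=\sum_{(v_i,v_j)\in E(\mathcal{G})}(x_i-x_j)^2$ gives positive semidefiniteness. You justify the equivalence $x^{T}Lx=0\iff Lx=0$ properly through an orthonormal eigenbasis. A vector whose differences vanish across every edge is exactly one that is constant on each component. Together these show that the multiplicity of $0$ as an eigenvalue of $L$ equals $\dim\ker L=k$, the number of components.

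The paper gives no argument of its own here; it simply cites the lemma from West. So the only comparison is with the usual textbook route, which takes an oriented incidence matrix $M$ with $L=MM^{T}$ and uses $\operatorname{rank}M=n-k$. That is the same idea as your remark about writing $L=B^{T}B$, phrased combinatorially rather than through the quadratic form.

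What your version buys is the stronger count: the multiplicity of $0$ equals the number of components, not merely that it is $1$ exactly for connected graphs. The paper actually relies on this stronger form in the proof of \autoref{thm:DS-Xhr}. There it asserts that, for the $(h-2)$-regular graph $Z$, the multiplicity of the eigenvalue $h-2$ equals the number of components of $Z$. Since $L(Z)=(h-2)I-A(Z)$, this follows immediately from your argument, but not from the lemma as stated.
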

	 
	 \begin{lemma}\label{lem:complement-spectrum}\cite{west2001introduction}
	 	Let $\mathcal{G}$ be a $k$-regular graph on $n$ vertices with adjacency spectrum $\Lambda(\mathcal{G})=
	 	\{[k]^1,\lambda_2,\ldots,\lambda_n\}.$ Then the complement $\overline{\mathcal{G}}$ is $(n-k-1)$-regular and $\Lambda (\overline{\mathcal{G}})
	 	=\{[n-k-1]^1,-1-\lambda_2,\ldots,-1-\lambda_n\}.$
	 	
	 \end{lemma}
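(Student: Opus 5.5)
The statement concerns a $k$-regular graph $\mathcal{G}$ on $n$ vertices and its complement. Write $A=A(\mathcal{G})$ and $J$ for the $n\times n$ all-ones matrix, so that
\[
A(\overline{\mathcal{G}})=J-I-A .
\]
The degree claim comes first and is immediate. Each vertex has $n-1$ other vertices, of which exactly $k$ are neighbours in $\mathcal{G}$. Hence it has $n-1-k$ neighbours in $\overline{\mathcal{G}}$, so $\overline{\mathcal{G}}$ is $(n-k-1)$-regular.

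For the spectrum, the key observation is that regularity makes $A$ commute with $J$. Indeed, the all-ones vector $\mathbf{1}$ satisfies $A\mathbf{1}=k\mathbf{1}$, so $AJ=kJ=JA$. Since $A$ is real symmetric, we choose an orthonormal eigenbasis that contains $\mathbf{1}/\sqrt{n}$ as the eigenvector for $k$. The remaining basis vectors $x_2,\ldots,x_n$ are orthogonal to $\mathbf{1}$ and satisfy $Ax_i=\lambda_i x_i$.

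Next we evaluate $J-I-A$ on this basis.
\begin{itemize}
\item On $\mathbf{1}$ we get $(n-1-k)\mathbf{1}$.
\item On each $x_i$ with $i\ge 2$, we have $Jx_i=0$ because $x_i\perp\mathbf{1}$, so $(J-I-A)x_i=(-1-\lambda_i)x_i$.
\end{itemize}
Because these $n$ vectors form a basis, the listed values are the full spectrum of $\overline{\mathcal{G}}$, with multiplicities. This gives $\{[n-k-1]^1,-1-\lambda_2,\ldots,-1-\lambda_n\}$.

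The only delicate point is the case where $k$ has multiplicity greater than one, which happens exactly when $\mathcal{G}$ is disconnected. We must then ensure that the basis separates $\mathbf{1}$ from the other eigenvectors for $k$. Choosing the remaining vectors of the $k$-eigenspace orthogonal to $\mathbf{1}$, by Gram--Schmidt inside that eigenspace, resolves this. Those vectors then contribute the value $-1-k$ in the list, which is consistent with the statement.
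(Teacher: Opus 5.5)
Your proof is correct, and it is the standard argument: write $A(\overline{\mathcal{G}})=J-I-A$ and use an orthonormal eigenbasis of $A$ that contains $\mathbf{1}/\sqrt{n}$, so that $J$ annihilates the remaining basis vectors. The paper gives no proof of its own here and simply cites the result from West.

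Your handling of a repeated eigenvalue $k$ is the right reading of the notation $[k]^1$, namely one distinguished copy of $k$. This is the form the paper needs when it applies the lemma to the disconnected graph $\overline{X_{h,r}}$, whose eigenvalue $h-2$ has multiplicity $r+1$. One small point: the identity $AJ=JA$ that you call the key observation is true but is never used; only $J\mathbf{1}=n\mathbf{1}$ and $Jx_i=0$ for $x_i\perp\mathbf{1}$ matter.
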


	\section{The Proper POE Complement Graphs and their properties}
	
	We now introduce the graph construction that forms the algebraic basis of our study. We first recall the Proper Prime Order Element Graph $\Gamma^*(G)$ introduced from Prime Order Element Graph in \cite{manna2025prime}. We then consider its complement and denote the resulting graph by $D^*(G)$. This graph will play a central role in the subsequent structural and spectral investigations.
	
		\begin{definition}
			 Let $G$ be a finite group with identity element $e$, and let $G^*=G\setminus\{e\}$. The Proper Prime Order Element Graph $\Gamma^*(G)$ is the graph with vertex set $G^*$ in which two distinct vertices $x,y\in G^*$ are adjacent if and only if $o(xy)$ is prime.
		\end{definition}

\begin{definition}\label{complement_POE}
	Let $G$ be a finite group. The \emph{Proper POE Complement Graph} of $G$, denoted by $D^*(G)$, is defined by
	$D^*(G)=\overline{\Gamma^*(G)}.$ Thus, $V(D^*(G))=G^*,$ and two distinct vertices $x,y\in G^*$ are adjacent in $D^*(G)$
	if and only if $o(xy)\text{ is not prime}.$ 
\end{definition}

 \begin{definition}\label{realizable_graph}
 	 We define a graph $\mathcal{G}$ is
 	\emph{realizable as a Proper Order POE Complement Graph} if there exists a finite group $G$ such that $D^{*}(G)\cong \mathcal{G}.$ The group $G$ is called a \emph{group realization} of the graph $\mathcal{G}$.
 \end{definition}
 
\begin{lemma}\label{adjacent_lemma}
	Let $G$ be a finite $2$-group. For two distinct vertices
	$x,y\in G^*$, $x\sim_{D^*(G)}y \iff o(xy)\neq 2.$
\end{lemma}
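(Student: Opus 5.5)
The plan is to unwind Definition~\ref{complement_POE} and then use the fact that in a $2$-group the only prime that can occur as an element order is $2$.

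First, by Definition~\ref{complement_POE}, two distinct vertices $x,y\in G^*$ satisfy $x\sim_{D^*(G)}y$ if and only if $o(xy)$ is not prime. So it suffices to show that, for a finite $2$-group $G$, the order $o(xy)$ is prime exactly when $o(xy)=2$.

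Next, let $|G|=2^n$. By Lagrange's theorem, $o(xy)$ divides $2^n$, so $o(xy)=2^k$ for some $0\le k\le n$. This uses the remark in the Group-Theory Essentials that every element of a $2$-group has order a power of $2$. A number $2^k$ is prime if and only if $k=1$: for $k=0$ we get $1$, which is not prime, and for $k\ge 2$ the number $2^k$ is divisible by $2$ and exceeds $2$. Hence $o(xy)$ is prime if and only if $o(xy)=2$.

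Combining the two steps, $x\sim_{D^*(G)}y$ if and only if $o(xy)\neq 2$. The case $xy=e$ (that is, $y=x^{-1}$) needs no separate treatment: then $o(xy)=1$, which is not prime and not equal to $2$, so the pair is adjacent under both descriptions.

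There is no real obstacle. The only point needing care is the case $o(xy)=1$, and it is handled by the observation that $1$ is not prime.
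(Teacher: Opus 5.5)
Your proof is correct and takes the same approach as the paper: you unwind the definition of $D^*(G)$ and use the fact that every element order in a $2$-group is a power of $2$, so $2$ is the only prime value $o(xy)$ can take. Your explicit check of the case $o(xy)=1$ adds a detail that the paper leaves implicit.
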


\begin{proof}
	Since $G$ is a finite $2$-group, the order of every element of $G$ is a power of $2$. Hence the only possible prime order of $xy$ is $2$. By the definition of $D^*(G)$,
	$x\sim_{D^*(G)}y \iff o(xy)\text{ is not prime},$ and therefore
	$x\sim_{D^*(G)}y\iff o(xy)\neq2.$
\end{proof}
 
 \begin{theorem}\label{degree_theorem}
 	Let $r_2(G)$ denote the number of involutions in a finite $2$-group $G$. Then, for every $x\in G^*$,
 	\[
 	\deg_{D^*(G)}(x)=
 	\begin{cases}
 		|G|-1-r_2(G), & \text{if }o(x)=2\text{ or }4,\\[2mm]
 		|G|-2-r_2(G), & \text{if }o(x)\geq8.
 	\end{cases}
 	\]
 	
 \end{theorem}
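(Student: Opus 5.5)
Fix $x\in G^*$. By Lemma~\ref{adjacent_lemma}, $x$ is adjacent in $D^*(G)$ to exactly those $y\in G^*\setminus\{x\}$ with $o(xy)\neq 2$. So I will count the non-neighbours of $x$ inside $G\setminus\{x\}$ and subtract. The count is organised by the substitution $z=xy$, i.e.\ $y=x^{-1}z$. Since left multiplication by $x^{-1}$ is a bijection $G\to G$, the map $y\mapsto xy$ puts the $y\in G$ with $o(xy)=2$ in bijection with the involutions $z\in I(G)$. Hence exactly $r_2(G)$ elements $y\in G$ satisfy $o(xy)=2$, namely the elements $y=x^{-1}z$ with $z\in I(G)$.

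Not all of these $y$ are legitimate vertices distinct from $x$, so two corrections are needed.

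\emph{First correction, $y=e$.} This happens iff $z=x$, which requires $x\in I(G)$, that is $o(x)=2$.

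\emph{Second correction, $y=x$.} This happens iff $z=x^2$ is an involution, that is $o(x^2)=2$, which is equivalent to $o(x)=4$.

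Since $o(x)$ is a power of $2$ and $x\neq e$, the cases $o(x)=2$, $o(x)=4$ and $o(x)\ge 8$ are exhaustive and mutually exclusive, so at most one correction applies, and exactly one applies precisely when $o(x)\in\{2,4\}$. Consequently the number of non-neighbours of $x$ in $G^*\setminus\{x\}$ is $r_2(G)-1$ when $o(x)\in\{2,4\}$, and $r_2(G)$ when $o(x)\ge 8$.

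The set $G^*\setminus\{x\}$ has $|G|-2$ elements, so subtracting gives
\[
\deg_{D^*(G)}(x)=(|G|-2)-(r_2(G)-1)=|G|-1-r_2(G)\quad\text{if } o(x)\in\{2,4\},
\]
\[
\deg_{D^*(G)}(x)=|G|-2-r_2(G)\quad\text{if } o(x)\ge 8,
\]
which is exactly the statement.

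The main care point is the bookkeeping of the excluded vertices $y=e$ and $y=x$. One must check that they cannot both occur, since $o(x)$ cannot be both $2$ and $4$. One must also check that in the case $o(x)=4$ the element $x^2$ really is an involution, so that the excluded $y=x$ was genuinely counted among the $r_2(G)$ solutions. Everything else is a direct consequence of the bijection $y\mapsto xy$.
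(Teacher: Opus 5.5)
Your proof is correct and follows essentially the same route as the paper. Both arguments count the $r_2(G)$ solutions $y=x^{-1}t$ of $xy=t$ as $t$ ranges over the involutions, discard the invalid solution $y=e$ (when $o(x)=2$) or $y=x$ (when $o(x)=4$), and subtract the remaining count from $|G|-2$; your explicit check that exactly one exclusion applies precisely when $o(x)\in\{2,4\}$ is a slightly tidier version of the paper's three-case argument.
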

 \begin{proof}
 	By the \autoref{adjacent_lemma}, two distinct non-identity elements $x$ and $y$ are adjacent in $D^*(G)$ when $o(xy)\neq 2$. Thus, to determine the degree of $x$ in $D^*(G)$, we count
 	the vertices that are non-adjacent to $x$ in $\Gamma^*(G)$.
 	Equivalently, these are the vertices adjacent to $x$ in
 	$\overline{\Gamma^*(G)}$, namely those $y\in G^*$ for which
 	$o(xy)\neq 2$. Let $I_G$ be the set of involutions in $G$. Then, $|I_G|=r_2(G)$. For every involution $t\in I_G$, the equation $xy=t$ has exactly one solution, that is, $y=x^{-1}t$. Therefore, initially, there are $r_2(G)$ possible neighbors of $x$ in $\Gamma^*(G)$. Also, we need to consider the case of $y=e$ and $y=x$ further. Hence, we have the following cases. 
 	
 	\textit{Case-I:} If $o(x)=2$, that is if $x$ itself is an involution. Take $t=x$, then, $y=x^{-1}t=x^{-1}x=e$. But $e$ is not the vertex of the graph $D^*(G)$. Therefore, one of the $r_2(G)$ solutions of the equation $xy=t$ is unacceptable. It indicates that the number of neighbors of $x$ in $\Gamma^*(G)$ is $(r_2(G)-1)$. \autoref{complement_POE} indicates that $D^*(G)=\overline{\Gamma^*(G)}$. Since, $x\in G^*$ cannot be adjacent to $x$ and $x^{-1}$ in $\Gamma^*(G)$, the degree of $x$ in $D^*(G)$ is $\deg(D^*(G)(x)) = (|G|-2)-(r_2(G)-1) = |G|-1-r_2(G)$. 
 	
 	\textit{Case-II:} If $o(x)=4$, that is if $x^2$ is an involution. Take $t=x^2$, then $y=x^{-1}t=x^{-1}x^2=x$. But we cannot consider $y=x$ because $\Gamma^*(G)$ is a simple graph having no loop. Hence again, one of the $r_2(G)$ solutions of the equation $xy=t$ is unacceptable. In this case, the number of neighbors of $x$ in $\Gamma^*(G)$ is $r_2(G)-1$. Therefore, $\deg(D^*(G)(x))=|G|-1-r_2(G)$.
 	
 	\textit{Case-III:} If $o(x)\geq 8$, then neither $x$ nor $x^2$ is an involution. Thus, no involution $t$ provides $y=e$ or $y=x$. Therefore, all $r_2(G)$ solutions of the equation $xy=t$ represents neighbors in $\Gamma^*(G)$. Hence, $\deg(D^*(G)(x))=|G|-2-r_2(G)$.
 \end{proof}
 
 \begin{corollary}\label{exponent_corollary}
 	Let $G$ be a finite $2$-group with the number of involutions $r_2(G)$. Then $D^*(G)$ is regular if and only if $exp(G)\leq 4$.
 \end{corollary}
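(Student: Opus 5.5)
The corollary follows almost immediately from \autoref{degree_theorem}. The plan is to read off the two possible degree values and decide when both occur. Since $G$ is a finite $2$-group, every non-identity element has order $2^k$ with $k\geq 1$. So every $x\in G^*$ has order $2$, $4$, or at least $8$. \autoref{degree_theorem} then partitions $G^*$ into two classes:
\begin{itemize}
\item elements of order $2$ or $4$, which have degree $|G|-1-r_2(G)$;
\item elements of order at least $8$, which have degree $|G|-2-r_2(G)$.
\end{itemize}
These two values differ by exactly $1$. Hence $D^*(G)$ is regular if and only if $G^*$ does not contain elements from both classes.

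For the \emph{if} direction, assume $\exp(G)\leq 4$. Then every $x\in G^*$ satisfies $x^4=e$, so $o(x)\in\{2,4\}$. By \autoref{degree_theorem}, every vertex has degree $|G|-1-r_2(G)$, and $D^*(G)$ is regular. This includes the degenerate case $|G|=2$, where there is a single vertex.

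For the \emph{only if} direction, argue by contrapositive and assume $\exp(G)\geq 8$. Then there is an element $g$ with $o(g)=2^k\geq 8$, and this $g$ has degree $|G|-2-r_2(G)$. Now consider $t=g^{2^{k-1}}$, which is an involution. It lies in $G^*$ and has degree $|G|-1-r_2(G)$. So $D^*(G)$ contains vertices of two different degrees and is not regular.

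The only step needing any care is this last one: we must exhibit a vertex in the order-$2$ or order-$4$ class whenever an element of order at least $8$ exists. Choosing an appropriate power of $g$ to obtain an involution settles it, because a $2$-group containing an element of order at least $8$ also contains an involution. I do not expect any real obstacle beyond this point.
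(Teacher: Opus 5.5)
Your proof is correct and follows the same route as the paper: you read off the two possible degrees from \autoref{degree_theorem}, and for the converse you produce both an element of order at least $8$ and an involution, whose degrees differ by $1$. The only cosmetic difference is that you exhibit the involution explicitly as $g^{2^{k-1}}$, whereas the paper simply invokes the fact that every nontrivial finite $2$-group contains an involution.
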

 \begin{proof}
 	Let $exp(G)\leq 4$, then every non-identity element $x\in G$ has order either $2$ or $4$. By \autoref{degree_theorem}, for every vertex $x\in G^*$, we have $\deg(D^*(G)(x))=|G|-1-r_2(G)$. Thus, $D^*(G)$ is a regular graph.
 	
 	Conversely, let $D^*(G)$ be a regular graph. Suppose, to the contrary, that, assume $exp(G)\geq 8$. Then there exists an element $x\in G$ such that $o(x)\geq 8$. Since every finite $2$-group contains an involution, we take $z\in G$ such that $o(z)=2$. Following \autoref{degree_theorem}, $\deg_{D^*(G)}(x) = |G|-2-r_2(G) \neq \deg_{D^*(G)}(z) = |G|-1-r_2(G)$. It contradicts the regularity of $D^*(G)$. Hence $exp(G)\leq 4$. 
 \end{proof}

	\section{The Graph Family $X_{h,r}$: Structure and Spectrum}
 	
		In \autoref{subsec:graph-terminology}, we define the join of two graphs. In this section, we use this idea for constructing the integral graphs. We have already mentioned that there are groups $G$ corresponding to these graphs such that $D^*(G)\cong X_{h,r}$ for the two parameters $h$ and $r$. 
 
		\begin{definition}\label{infinite_graph_family}
			Let $h\geq 2$ be an even integer and let $r\geq 1$. Define $X_{h,r}
		 	=
		 	\overline{K}_{h-1}
		 	\vee
		 	\left(
		 	\bigvee_{i=1}^{r} \left(\frac{h}{2}\right)K_2
		 	\right),$ where $\overline{K}_{h-1}$ is the null graph on $h-1$ vertices and
		 	$\left(\frac{h}{2}\right)K_2$ is the disjoint union of $\frac{h}{2}$ copies of $K_2$ graphs. 
		\end{definition}
		From now onward, we assume $h,r\in \mathbb{Z}$ such that $h$ is even with $h\geq 2$ and $r\geq 1$. \autoref{all-scalable-family} illustrates the examples for $h=2,4,8,16$ and $r=1,2,3,4$. Now we discuss a number of characteristics of these graphs. 

		\begin{lemma}\label{X_h_r_basic}
			The graph $X_{h,r}$ is a connected $rh$-regular graph with $(r+1)h-1$ vertices. Moreover, the complement of $X_{h,r}$ is isomorphic to $K_{h-1}\,\dot{\cup}\,
			r\left(K_h-\left(\frac{h}{2}\right)K_2\right).$
		\end{lemma}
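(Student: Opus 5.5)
The plan is to read everything off the join structure in \autoref{infinite_graph_family}. Write $V(X_{h,r})=A\,\dot\cup\,B_1\,\dot\cup\cdots\dot\cup\,B_r$. Here $A$ is the vertex set of $\overline{K}_{h-1}$, so $|A|=h-1$. Each $B_i$ is the vertex set of the $i$-th copy of $\left(\frac{h}{2}\right)K_2$, so $|B_i|=h$. Summing gives $(h-1)+rh=(r+1)h-1$ vertices.

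Next I record the edges of the iterated join explicitly. Two vertices are adjacent exactly when one of the following holds:
\begin{enumerate}[label=(\roman*)]
\item they lie in different blocks among $A,B_1,\dots,B_r$;
\item they lie in the same $B_i$ and form one of its $\frac h2$ matching edges.
\end{enumerate}
There are no edges inside $A$. This relies on the successive join $\bigvee_{i=1}^r$ joining every pair of distinct blocks.

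Regularity then follows by counting neighbours. A vertex of $A$ is adjacent to all of $B_1\cup\cdots\cup B_r$, giving degree $rh$. A vertex of $B_i$ is adjacent to all of $A$, to all of $B_j$ for $j\neq i$, and to its one matching partner in $B_i$. Its degree is therefore $(h-1)+(r-1)h+1=rh$. Connectivity is immediate because $r\ge 1$ and $h\geq 2$. Every vertex of $A$ is adjacent to every vertex of $B_1$. Any two vertices of $\bigcup_i B_i$ are joined through a vertex of $A$, since $h-1\geq 1$. So the graph has diameter at most $2$, and no spectral tool such as \autoref{lem:laplacian-connectedness} is needed.

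For the complement I use the same partition. In $\overline{X_{h,r}}$ no edges run between different blocks, because all such pairs were edges of $X_{h,r}$. Hence $\overline{X_{h,r}}$ is the disjoint union of the complements of the induced subgraphs on each block. On $A$ the induced graph is $\overline{K}_{h-1}$, whose complement is $K_{h-1}$. On $B_i$ the induced graph is the perfect matching $\left(\frac h2\right)K_2$ on $h$ vertices, whose complement is $K_h-\left(\frac h2\right)K_2$. This yields $K_{h-1}\,\dot\cup\, r\left(K_h-\left(\frac h2\right)K_2\right)$.

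As a consistency check, the complement should be $((r+1)h-2-rh)=(h-2)$-regular, and both components are. The only step needing care is to state clearly that the iterated join adds every edge between distinct blocks, not merely between consecutive ones. Everything else is routine counting.
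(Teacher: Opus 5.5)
Your proof is correct and follows essentially the same route as the paper: you partition the vertices into the null block and the $r$ matching blocks, count vertices and degrees block by block, get connectivity from the complete joins between distinct blocks, and take complements block-wise to turn the join into a disjoint union. Your explicit diameter-at-most-$2$ argument is simply a more careful version of the paper's remark that a join of nonempty graphs is connected.
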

		\begin{proof}
			Note that the vertex set of $X_{h,r}$ admits a partition $V(X_{h,r}) = V_0\dot\cup V_1\dot\cup\cdots\dot\cup V_r$, such that $X_{h,r}[V_0]\cong \overline{K}_{h-1}$, $X_{h,r}[V_i] \cong \left(\frac{h}{2}\right)K_2$ for $1 \leq i \leq r$, The graph $\overline{K}_{h-1}$ has $h-1$ vertices, while each of the
			$r$ copies of $\left(\frac{h}{2}\right)K_2$ has $h$ vertices. Hence the number of vertices in $X_{h,r}$ is $(h-1) + rh =(r+1)h-1$. 
			
			Due to the structure generated by graph join, any two vertices belonging to distinct vertex-partitions are adjacent. Thus, the vertex $v\in V_0$ has no neighbor in $V_0$. But it is adjacent to every vertex in $V_1\cup\cdots\cup V_r$. Therefore, $\deg(v)=rh.$ If $v\in V_i$ for some $1\leq i\leq r$, then $v$ has exactly one neighbor in $V_i$. In addition, $v$ is adjacent to all $h-1$ vertices of $V_0$ and to all the vertices in the $r-1$ vertex partitions $V_j$ for $j = 1, 2, \dots r$, $j \neq i$. Hence, $\deg(v)
			= 1+(h-1)+(r-1)h = rh.$ It indicates that, $X_{h,r}$ is an $rh$-regular graph. 
			
			For $r\geq 1$, the graph $X_{h,r}$ is the join of at least two non-empty graphs, every vertex in one part is adjacent to every vertex in each of the other parts. Therefore, $X_{h,r}$ is connected. 
			
			Finally, taking complements converts the join into a disjoint union. Moreover, $\overline{\overline{K}_{h-1}}=K_{h-1}$ and $\overline{\left(\frac{h}{2}\right)K_2} = K_h - \left(\frac{h}{2}\right)K_2.$ Thus, $\overline{X_{h,r}} \cong K_{h-1}\,\dot{\cup}\, r\left(K_h-\left(\frac{h}{2}\right)K_2\right).$ 
	 	\end{proof}
	 	
	 	The graph $K_{h-1}\dot\cup r\left(K_n-\left(\frac{h}{2}\right)K_2\right)$ is a disconnected graph. Its connected components are isomorphic to $K_{h-1}$ and $K_h-\left(\frac{h}{2}\right)K_2$. Moreover, the graph $K_h-\left(\frac{h}{2}\right)K_2$ is obtained from $K_h$ by deleting one edge incident with each vertex and that is why it is regular with degree $h-2$.

		\begin{lemma}\label{cocktail_lemma}
			The graph $K_h-\left(\frac{h}{2}\right)K_2$ is integral with adjacency spectrum 
			$$\Lambda\left(K_h-\left(\frac{h}{2}\right)K_2\right) = \left\{ [h-2]^1,\, [0]^{h/2},\,[-2]^{h/2-1} \right\}.$$ 
		\end{lemma}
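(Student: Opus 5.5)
The plan is to get the spectrum of the cocktail party graph $K_h-\left(\frac{h}{2}\right)K_2$ by complementation, using \autoref{lem:complement-spectrum}. Its complement is exactly the perfect matching $\left(\frac{h}{2}\right)K_2$, which is $1$-regular on $h$ vertices. So we first compute the spectrum of $\left(\frac{h}{2}\right)K_2$ and then transform it.

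\textbf{Step 1: the spectrum of the matching.} The adjacency matrix of $K_2$ is $\begin{pmatrix}0&1\\1&0\end{pmatrix}$, with eigenvalues $1$ and $-1$. With a suitable vertex ordering, the adjacency matrix of $\left(\frac{h}{2}\right)K_2$ is the block-diagonal sum of $\frac{h}{2}$ copies of this matrix. Hence
\[
\Lambda\left(\left(\tfrac{h}{2}\right)K_2\right)=\left\{[1]^{h/2},[-1]^{h/2}\right\}.
\]

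\textbf{Step 2: apply \autoref{lem:complement-spectrum}.} Take $k=1$ and $n=h$, and single out one copy of the eigenvalue $1$ as the regular eigenvalue $k$. The complement is then $(h-2)$-regular. Its spectrum consists of $h-1-1=h-2$ with multiplicity $1$, together with $-1-\lambda$ for each remaining eigenvalue $\lambda$.
\begin{itemize}
\item The $\frac{h}{2}-1$ remaining copies of $1$ give $-2$, with multiplicity $\frac{h}{2}-1$.
\item The $\frac{h}{2}$ copies of $-1$ give $0$, with multiplicity $\frac{h}{2}$.
\end{itemize}
The multiplicities sum to $1+\frac{h}{2}+\frac{h}{2}-1=h$, which is the number of vertices. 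All eigenvalues are integers, so the graph is integral.

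\textbf{Potential obstacle.} \autoref{lem:complement-spectrum} pairs the eigenvalue $k$ with the all-ones eigenvector. Because the matching is disconnected, the eigenvalue $1$ has multiplicity $\frac{h}{2}$, so we must check that only the all-ones direction maps to $h-2$. The cleanest fix is to argue directly: $A(\overline{\mathcal G})=J-I-A(\mathcal G)$, and $A(\mathcal G)$ commutes with $J$. Choose an orthogonal eigenbasis of $A(\mathcal G)$ that contains $\mathbf{1}$. Every other basis vector $x$ satisfies $Jx=0$, so $A(\overline{\mathcal G})x=(-1-\lambda)x$. This recovers the stated multiplicities exactly.

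As a sanity check, for $h=2$ the graph is $\overline{K}_2$ with spectrum $\{[0]^2\}$, which agrees with the formula since $[-2]^0$ is empty.
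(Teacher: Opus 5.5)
Your argument is correct, but it runs in the opposite direction from the paper's proof.

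The paper works on $K_h-\left(\frac{h}{2}\right)K_2$ itself and exhibits eigenvectors directly:
$\mathbf{1}$ for $h-2$;
the $\frac{h}{2}$ vectors $e_{u_i}-e_{v_i}$ for $0$, since the two ends of each deleted edge have the same neighbourhood;
and the vectors constant on each pair $\{u_i,v_i\}$ with zero coordinate sum for $-2$.
It then concludes by checking that the dimensions add up to $h$.

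You instead read off the spectrum of the block-diagonal matrix of $\left(\frac{h}{2}\right)K_2$ and transport it through $A(\overline{\mathcal G})=J-I-A(\mathcal G)$. Underneath, the two proofs use the same eigenbasis. The $(-1)$-eigenvectors $e_{u_i}-e_{v_i}$ of the matching become the $0$-eigenvectors. The $(+1)$-eigenvectors in the span of the $e_{u_i}+e_{v_i}$ that are orthogonal to $\mathbf{1}$ are exactly the paper's pair-constant zero-sum vectors for $-2$.

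Your route gets completeness and exact multiplicities for free from the orthogonal eigenbasis. You also rightly flag and repair the one delicate point: \autoref{lem:complement-spectrum} is phrased for a simple top eigenvalue, but the matching is disconnected and $1$ has multiplicity $\frac{h}{2}$. That same refinement is what justifies the paper's later use of the complement rule on the disconnected graph $\overline{X_{h,r}}$ in \autoref{Xhr_spectrum}. The paper's route, in exchange, is self-contained and shows the eigenspaces explicitly without appealing to complementation.
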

		\begin{proof}
		 	Note that, $K_h$ and $\left(\frac{h}{2}\right)K_2$ have the same set of vertices. Also, $$\left(\frac{h}{2}\right)K_2
		 	=
		 	\{(u_1,v_1),(u_2,v_2),\ldots,(u_{h/2},v_{h/2})\}.$$ The graph $K_h-\left(\frac{h}{2}\right)K_2$ is obtained from $K_h$ by deleting one edge
		 	incident with each vertex. Thus, every vertex has degree $h-2$, and $K_h-\left(\frac{h}{2}\right)K_2$ is a regular graph, with regularity $(h-2)$. Hence, $h-2$ is an eigenvalue with all-ones eigenvector $\mathbf{1}$.
		 	
		 	Let $e_u$ be a vector corresponding to the vertex $u$ such that $e_u=(0,0,0,\cdots,1(\text{ u-th position }),\cdots, 0)^t$ of length $h$. For each $1\leq i\leq h/2$, consider the vector $(e_{u_i}-e_{v_i}).$ The vertices $u_i$ and $v_i$ are non-adjacent and have same neighbourhood in $K_h-\left(\frac{h}{2}\right)K_2$. Note that, $A\left(e_{u_i}-e_{v_i}\right)=0.$ There are $\frac{h}{2}$ vectors $(e_{u_i}-e_{v_i})$ which are linearly independent. Hence, $0$ is an eigenvalue with multiplicity $\frac{h}{2}$. 
		 	
		 	Next, we consider vectors that are constant on each pair $\{u_i,v_i\}$ and whose total coordinate sum is zero. This subspace has dimension $\left(\frac{h}{2}-1\right)$. Any vector $w$ in this subspace has the form the form $(\alpha_1,\alpha_1,\alpha_2,\alpha_2,\cdots , \alpha_{\frac{h}{2}},\alpha_{\frac{h}{2}})$, where $2(\alpha_1+\alpha_2+\cdots+\alpha_{\frac{h}{2}})=0$. The adjacency matrix of $K_h-\left(\frac{h}{2}\right)K_2$ is $$\begin{pmatrix}
		 		O_{2\times 2} & J_{2\times 2} & \cdots & J_{2\times 2}\\
		 		J_{2\times 2} & O_{2\times 2} & \cdots & J_{2\times 2}\\
		 		\cdots & \cdots & \cdots & \cdots \\
		 		J_{2\times 2} & J_{2\times 2} & \cdots & O_{2\times 2}\\
		 	\end{pmatrix}$$, where $O_{2\times 2}$ is the zero matrix of order $2$ and $J_{2\times 2}$ is the all one matrix of order $2$. Therefore, $Aw=-2w$. Thus, $-2$ is an eigenvalue with multiplicity at least $\left(\frac{h}{2}-1\right)$. 
		 	
		 	The total multiplicity corresponding to all the eigenvalues is $1+\frac{h}{2}+\left(\frac{h}{2}-1\right)=h,$ which is equal to the number of vertices. Hence $\Lambda\left(K_h-\left(\frac{h}{2}\right)K_2\right)
		 	=
		 	\left\{
		 	[h-2]^1,\,
		 	[0]^{h/2},\,
		 	[-2]^{h/2-1}
		 	\right\}.$
		 \end{proof}
		 
		 	\begin{note}
		 		Putting $h=2$ in \autoref{cocktail_lemma}, the graph is isomorphic to $K_2-\left(\frac{2}{2}\right)K_2$ which is two isolated vertices. Therefore, displayed spectrum will be $\{[2-2]^1,[0]^1\}$ or $\{[0]^2\}$.
		 	\end{note}

 \begin{theorem}\label{Xhr_spectrum}
 	The graph $X_{h,r}$ is an integral graph with  \[\Lambda(X_{h,r})
 	=
 	\left\{
 	[rh]^1, [1-h]^r, [1]^{r\left(\frac{h}{2}-1\right)},\,
 	[0]^{h-2},[-1]^{\frac{rh}{2}} \right\}.\]  
 \end{theorem}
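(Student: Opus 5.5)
The plan is to compute the spectrum of the complement first and then transfer it back with \autoref{lem:complement-spectrum}. By \autoref{X_h_r_basic}, $X_{h,r}$ is $rh$-regular on $n=(r+1)h-1$ vertices. Its complement is the disjoint union
\[
\overline{X_{h,r}}\cong K_{h-1}\,\dot\cup\, r\left(K_h-\left(\tfrac{h}{2}\right)K_2\right),
\]
which is $(h-2)$-regular, since $n-rh-1=h-2$. The adjacency matrix of a disjoint union is block diagonal, so its spectrum is the multiset union of the spectra of the components.

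The spectrum of $K_{h-1}$ is $\{[h-2]^1,[-1]^{h-2}\}$. For each of the $r$ copies of $K_h-\left(\frac{h}{2}\right)K_2$, \autoref{cocktail_lemma} gives $\{[h-2]^1,[0]^{h/2},[-2]^{h/2-1}\}$. Collecting these,
\[
\Lambda\left(\overline{X_{h,r}}\right)=\left\{[h-2]^{r+1},\,[0]^{rh/2},\,[-2]^{r(h/2-1)},\,[-1]^{h-2}\right\}.
\]

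Next I apply \autoref{lem:complement-spectrum} with $\mathcal{G}=\overline{X_{h,r}}$ and $k=h-2$. Since $X_{h,r}$ is the complement, it has eigenvalue $n-k-1=rh$ with multiplicity one. Every other eigenvalue $\lambda$ of $\overline{X_{h,r}}$ becomes $-1-\lambda$. The step needing care is the eigenvalue $h-2$, which has multiplicity $r+1$ in the complement. Exactly one copy (the all-ones eigenvector) is the regularity eigenvalue $k$ removed in the lemma. The remaining $r$ copies are genuine $\lambda_i$'s and map to $-1-(h-2)=1-h$.

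To justify this, I would note that the eigenspace of $h-2$ is spanned by the indicator vectors of the $r+1$ components. The orthogonal complement of $\mathbf 1$ within it has dimension $r$. Since $J$ kills these vectors, $A(X_{h,r})=J-I-A(\overline{X_{h,r}})$ acts on them as $-1-(h-2)$. This is the only real obstacle; the lemma as stated lists $\lambda_2,\ldots,\lambda_n$ with multiplicity, so it applies directly once that point is recognized.

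The remaining eigenvalues transform as follows:
\begin{itemize}
\item $0\mapsto -1$ with multiplicity $rh/2$;
\item $-2\mapsto 1$ with multiplicity $r(h/2-1)$;
\item $-1\mapsto 0$ with multiplicity $h-2$.
\end{itemize}
As a check, the multiplicities sum to
\[
1+r+r\left(\tfrac{h}{2}-1\right)+(h-2)+\tfrac{rh}{2}=rh+h-1=n.
\]
All eigenvalues are integers, so $X_{h,r}$ is integral. The degenerate case $h=2$ is consistent with the Note following \autoref{cocktail_lemma}: there $r(h/2-1)=0$ and $h-2=0$, so those terms are simply absent.
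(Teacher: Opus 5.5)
Your proposal is correct and follows the paper's route. You compute the spectrum of $\overline{X_{h,r}}\cong K_{h-1}\,\dot\cup\, r\left(K_h-\left(\frac{h}{2}\right)K_2\right)$ using \autoref{cocktail_lemma}, then transfer it back via \autoref{lem:complement-spectrum}, keeping one copy of $h-2$ for the all-ones vector and sending the other $r$ copies to $1-h$. Your justification of that last step, via the part of the $(h-2)$-eigenspace orthogonal to $\mathbf{1}$ on which $J$ vanishes, makes explicit what the paper only asserts. One small imprecision: for $h=2$ that eigenspace is all of $\mathbb{R}^{n}$ rather than the span of $r+1$ component indicators. The same computation $A(X_{h,r})=J-I-A(\overline{X_{h,r}})$ still gives $\{[2r]^1,[-1]^{2r}\}$, which matches the paper's separate check $X_{2,r}\cong K_{2r+1}$.
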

 \begin{proof}
 	By \autoref{X_h_r_basic}, $\overline{X_{h,r}}
 	\cong
 	K_{h-1}\,\dot{\cup}\,
 	r\left(K_h-\left(\frac{h}{2}\right)K_2\right).$ Now, $\Lambda(K_{h-1})
 	=
 	\left\{
 	[h-2]^1,\,
 	[-1]^{h-2}
 	\right\},$ and by \autoref{cocktail_lemma}, $\Lambda\left(K_h-\left(\frac{h}{2}\right)K_2\right)
 	=
 	\left\{
 	[h-2]^1,\,
 	[0]^{h/2},\,
 	[-2]^{h/2-1}
 	\right\}.$ As $\overline{X}_{h,r}$ a disconnected graph with connected components $K_{h-1}$ and $r$ copies of $K_h-\left(\frac{h}{2}\right)K_2$,\\ $$\Lambda\left(\overline{X}_{h,r}\right)
 	=
 	\left\{
 	[h-2]^{r+1},\,
 	[-1]^{h-2},\,
 	[0]^{\frac{rh}{2}},\,
 	[-2]^{r\left(\frac{h}{2}-1\right)}
 	\right\}.$$ Since, $X_{h,r}$ has $(r+1)h-1$ vertices and it is $rh$-regular, its complement is $(h-2)$-regular. For a $k$-regular graph on $n=(r+1)h-1$ vertices, if $k,\lambda_2,\ldots,\lambda_n$ are its eigenvalues, then from \cite{bali2002nska}, the eigenvalues of its complement are $n-k-1,\,-1-\lambda_2,\ldots,-1-\lambda_n$. Applying this relation from \autoref{lem:complement-spectrum} to $\overline{X_{h,r}}$, one copy of the
 	eigenvalue $h-2$ corresponding to the all-one vector gives $(r+1)h-2-(h-2)=rh$ as the principal eigenvalue of $X_{h,r}$. The remaining $r$ copies of $h-2$ give the eigenvalue $-1-(h-2)=1-h.$ Similarly, $-1-(-1)=0, -1-0=-1, -1-(-2)=1.$ Therefore, $\Lambda(X_{h,r})
 	=
 	\left\{
 	[rh]^1,\,
 	[1-h]^r,\,
 	[1]^{r\left(\frac{h}{2}-1\right)},\,
 	[0]^{h-2},\,
 	[-1]^{\frac{rh}{2}}
 	\right\}.$ Hence $X_{h,r}$ is integral.
 	
 	Also, for the case $h=2$, the graph $X_{2,r}\cong K_{2r+1}$. Therefore, the spectrum is $\operatorname{Spec}(X_{2,r})=\{[2r]^1,[-1]^{2r}\}$, which is integral.
 \end{proof}
 
 \begin{theorem}\label{thm:DS-Xhr}
 	The graph $X_{h,r}$ is uniquely determined by its adjacency spectrum. 		
 \end{theorem}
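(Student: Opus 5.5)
The plan is to pass to complements, where the known spectrum splits into connected components that can be identified one at a time. Let $Y$ be any graph cospectral with $X_{h,r}$. Then $Y$ has $n=(r+1)h-1$ vertices. Since $\operatorname{tr}A^2=2|E|$ is determined by the spectrum, $Y$ has $\frac{n\cdot rh}{2}$ edges, so its average degree equals its spectral radius $rh$. Applying \autoref{Rayleigh_quotient} with $x=\mathbf 1$ gives $\bar d=\frac{\mathbf 1^TA\mathbf 1}{\mathbf 1^T\mathbf 1}\le\lambda_1=rh$. Equality forces $\mathbf 1$ to be an eigenvector, so $Y$ is $rh$-regular. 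By \autoref{lem:complement-spectrum}, $\overline{Y}$ is then $(h-2)$-regular with the same spectrum as $\overline{X_{h,r}}$, computed in the proof of \autoref{Xhr_spectrum}:
$$\Lambda(\overline{Y})=\left\{[h-2]^{r+1},[-1]^{h-2},[0]^{\frac{rh}{2}},[-2]^{r\left(\frac h2-1\right)}\right\}.$$
The case $h=2$ is immediate: $\overline{Y}$ is $0$-regular, so $Y\cong K_{2r+1}\cong X_{2,r}$. Assume $h\ge4$ from now on.

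For a $k$-regular graph, the multiplicity of $k$ equals the number of connected components; this follows from Perron--Frobenius, or equivalently from \autoref{lem:laplacian-connectedness} applied to $L=kI-A$. Hence $\overline{Y}$ has exactly $r+1$ components $C_1,\dots,C_{r+1}$. Each $C_i$ is connected and $(h-2)$-regular, and its spectrum is $h-2$ once together with $a_i$ zeros, $b_i$ copies of $-1$ and $c_i$ copies of $-2$, where $n_i=1+a_i+b_i+c_i$ is its number of vertices. Two trace identities then constrain each component:
$$\operatorname{tr}A(C_i)=0:\quad (h-2)-b_i-2c_i=0,$$
$$\operatorname{tr}A(C_i)^2=n_i(h-2):\quad (h-2)^2+b_i+4c_i=n_i(h-2).$$
Substituting $b_i=h-2-2c_i$ into the second equation gives
$$n_i=h-1+\frac{2c_i}{h-2}.$$
Since $0\le c_i\le\frac{h-2}{2}$ (because $b_i\ge0$), integrality of $n_i$ leaves only two possibilities.

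The first possibility is $c_i=0$, which gives $n_i=h-1$. An $(h-2)$-regular graph on $h-1$ vertices is $K_{h-1}$. The second is $c_i=\frac{h-2}{2}$, which gives $n_i=h$ and $b_i=0$. An $(h-2)$-regular graph on $h$ vertices is the complement of a $1$-regular graph, that is, $K_h-\left(\frac h2\right)K_2$. Let $p$ components be of the first type. Counting vertices gives $p(h-1)+(r+1-p)h=(r+1)h-1$, so $p=1$. Therefore $\overline{Y}\cong K_{h-1}\dot\cup\, r\left(K_h-\left(\frac h2\right)K_2\right)\cong\overline{X_{h,r}}$ by \autoref{X_h_r_basic}, and taking complements yields $Y\cong X_{h,r}$.

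The main obstacle is the middle step: ruling out components of $\overline{Y}$ that mix the eigenvalues $-1$ and $-2$ in some other way. I expect the two trace equations above to settle this, because the bound $c_i\le\frac{h-2}{2}$ makes the fractional term $\frac{2c_i}{h-2}$ integral only at its two endpoints. Only if this failed would I fall back on $\operatorname{tr}A^3$, which counts triangles, or on an interlacing argument.
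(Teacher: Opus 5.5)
Your proof is correct and follows essentially the same route as the paper. It gets regularity of $Y$ from the Rayleigh-quotient equality between average degree and spectral radius, passes to the $(h-2)$-regular complement with $r+1$ components, and uses the two trace identities per component to force component sizes $h-1$ or $h$; the vertex count then gives exactly one $K_{h-1}$ and $r$ copies of $K_h-\left(\frac h2\right)K_2$. The differences are cosmetic: the paper handles $h=2$ by a direct edge count and bounds component sizes via $q=s-d-1\ge 0$ and nonnegativity of the $(-1)$-multiplicity, where you use integrality of $\frac{2c_i}{h-2}\in[0,1]$.
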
 
 
 \begin{proof}
 	Let $Y$ be a graph satisfying $\Lambda(Y)=\Lambda(X_{h,r}).$ We want to prove that $Y\cong X_{h,r}$. 
 	
 	If $h=2$, then $X_{2,r}\cong K_{2r+1}.$ Let $Y$ be a graph such that $\Lambda(Y)=\Lambda(X_{2,r})=
 	\{[2r]^1,[-1]^{2r}\}.$ Hence, $|V(Y)|=2r+1.$ Moreover, $2|E(Y)|=\operatorname{tr}(A(Y)^2)=(2r)^2+2r=2r(2r+1).$
 	Therefore, $|E(Y)|=r(2r+1)=\binom{2r+1}{2}.$ Thus, $Y$ has the maximum possible number of edges on $2r+1$ vertices, and consequently $Y\cong K_{2r+1}\cong X_{2,r}.$
 	
 	For $h\geq 4$, we break the proof into following parts:
 	\begin{enumerate}
 		\item From the spectral information on $Y$, we find the properties of $\overline{Y}$, which is a regular disconnected graph.
 		\item We find the number of vertices and edges in different connected components of $\overline{Y}.$
 		\item From the information about $\overline{Y}$ in (2), we prove that $Y\cong X_{h,r}$.
 	\end{enumerate}
 	
 	\noindent
 	\textbf{Proof of (1)}
 	
 	If $h\geq 4$, we have, by \autoref{Xhr_spectrum},
 	$\Lambda(X_{h,r})
 	=\left\{[rh]^1,\,[1-h]^r,\,[1]^{\,r\left(\frac{h}{2}-1\right)},\,[0]^{h-2},\,[-1]^{\frac{rh}{2}}\right\}= \Lambda(Y).$
 	Therefore, $|V(Y)|=|V(X_{h,r})|=(r+1)h-1.$ and $|E(Y)|=|E(X_{h,r})|=\frac{rh}{2}\{(r+1)h-1\}\,$. Since $X_{h,r}$ is $rh$-regular, the Handshaking Lemma gives $2|E(X_{h,r})| = rh\,|V(X_{h,r})|.$ Consequently, the average degree of $Y$ is $\frac{2|E(Y)|}{|V(Y)|}=rh$.
 	
 	On the other hand, the largest adjacency eigenvalue of $Y$, $\rho(Y)=rh.$ Let $\mathbf{1}$ denote the all-one vector. By \autoref{Rayleigh_quotient}, $\rho(Y) \geq \frac{\mathbf{1}^{T}A(Y)\mathbf{1}}
 	{\mathbf{1}^{T}\mathbf{1}}=\frac{2|E(Y)|}{|V(Y)|}.$
 	Since equality holds, $\mathbf{1}$ is an eigenvector corresponding to $\rho(Y)$. The entries of $A(Y)\mathbf{1}$ are precisely the degrees of the
 	vertices in $Y$. Therefore, every vertex of $Y$ has degree $rh$, and hence $Y$ is $rh$-regular. Thus, the multiplicity of $rh$ is $1$. Hence, the Laplacian matrix $L(Y)$ has only one zero eigenvalue. Following the \autoref{lem:laplacian-connectedness}, $Y$ is connected.
 	
 	Now put $Z=\overline{Y}.$ For every $v\in V(Y)$, $\deg_Z(v) = |V(Y)|-1-\deg_Y(v).$ Hence $\deg_Z(v) = ((r+1)h-1)-1-rh =h-2.$ Therefore $Z$ is $(h-2)$-regular. 
 	
 	Since $Y$ is regular, the usual complement relation for adjacency eigenvalues may be applied. The principal eigenvalue $rh$ of $Y$ gives $|V(Y)|-rh-1=h-2,$ while every other adjacency eigenvalue $\lambda$ of $Y$ is
 	transformed into $-1-\lambda$. Therefore,
 	\[
 	\Lambda(Z)
 	=
 	\left\{
 	[h-2]^{r+1},\,
 	[-1]^{h-2},\,
 	[0]^{\frac{rh}{2}},\,
 	[-2]^{\,r\left(\frac{h}{2}-1\right)}
 	\right\}.
 	\] Since $Z$ is $(h-2)$-regular, the multiplicity of the eigenvalue $h-2$ is the number of connected components of $Z$.
 	
 	\noindent
 	\textbf{Proof of (2)}
 	
 	Let $C$ be an arbitrary connected component of $Z$, and put
 	$d=h-2\text{ and } s=|V(C)|.$ Since $C$ is a connected $d$-regular graph, $d$ is a simple eigenvalue of $C$. All the remaining eigenvalues of $C$ must belong to $\{-1,0,-2\}.$ Suppose, their respective multiplicities are $a,b,c$. Then $\operatorname{Spec}(C) = \{[d]^1,[-1]^a,[0]^b,[-2]^c\}.$ Since the trace of the adjacency matrix of $C$ is zero, $d-a-2c=0,$ such that $a+2c=d.$ Also, since $C$ is $d$-regular on $s$ vertices,
 	$\operatorname{tr}(A(C)^2)=2|E(C)|=sd.$ Using its spectrum, we also have $\operatorname{tr}(A(C)^2) = d^2+a+4c.$ Hence $d^2+a+4c=sd.$ Substituting $a=d-2c$ gives
 	$d^2+d+2c=sd,$ and therefore $2c=d(s-d-1).$ Since a $d$-regular simple graph has at least $d+1$ vertices, put
 	$q=s-d-1\geq0.$ Then $c=\frac{dq}{2}.$ Consequently, $a=d-2c=d-dq=d(1-q).$ Since $a\geq0$ and $d=h-2>0$, we obtain $q\leq1.$ As $q$ is a non-negative integer, $q\in\{0,1\}.$ Thus $s\in\{d+1,d+2\} = \{h-1,h\}.$ Therefore every connected component of $Z$ has either $h-1$ or $h$ vertices. 
 	
 	Let $t$ denote the number of components of $Z$ having $h$ vertices.
 	Since $Z$ has $r+1$ connected components, the remaining $r+1-t$ components have $h-1$ vertices. Hence $|V(Z)|= th+(r+1-t)(h-1).$ Since $|V(Z)|=(r+1)h-1,$ we obtain $th+(r+1-t)(h-1)=(r+1)h-1.$ Simplifying gives $t=r.$
 	Thus $Z$ has exactly one component on $h-1$ vertices and exactly $r$ components on $h$ vertices.
 	The component on $h-1$ vertices is $(h-2)$-regular. Therefore every vertex is adjacent to all the other $h-2$ vertices, and hence this component is isomorphic to $K_{h-1}.$
 	
 	\noindent
 	\textbf{Proof of (3)}
 	
 	Now consider any component of $Z$ having $h$ vertices. It is $(h-2)$-regular. Hence its complement, taken on the same $h$ vertices, is $1$-regular. Since $h$ is even, every $1$-regular graph on $h$ vertices is a collection of $h/2$ disjoint $K_2$ graphs. Therefore, each such component is isomorphic to $K_h-\left(\frac{h}{2}\right)K_2.$ Consequently, $Z \cong K_{h-1} \mathbin{\dot\cup} r\left(K_h-\left(\frac{h}{2}\right)K_2\right).$
 	Taking complements, we obtain
 	\[
 	Y
 	\cong
 	\overline{K}_{h-1}
 	\vee
 	\left(
 	\bigvee_{i=1}^{r}
 	\left(\frac{h}{2}\right)K_2
 	\right)
 	=
 	X_{h,r}.
 	\]
 	Hence $X_{h,r}$ is determined by its adjacency spectrum.
 \end{proof}
 
 \section{Group Realizations of the Graph $X_{h,r}$}
 
 The purpose of this section is to solve the realization problem for the graph family $X_{h,r}$. More precisely, we seek to characterize those finite $2$-groups $G$ for which $D^*(G)\cong X_{h,r}.$ After obtaining an exact realization criterion, we investigate the extent to which the realizing group is uniquely determined by the graph. In particular, we show that distinct Abelian and non-Abelian groups may give rise to the same graph $X_{h,r}$.

\begin{theorem}\label{forward_realization}
		Let $\exp(G)=4$, and $H=\{x\in G:x^2=e\}$ is a subgroup of $G$. Put $h=|H|, r=[G:H]-1.$ Then $D^{*}(G)\cong X_{h,r}.$
\end{theorem}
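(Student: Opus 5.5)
The plan is to partition $G^*$ by the cosets of $H$ and read off the adjacency rule of \autoref{adjacent_lemma} on each pair of blocks. Since $\exp(G)=4$, every element of $G^*$ has order $2$ or $4$. Hence for distinct $x,y\in G^*$ the product $xy$ has order $1$, $2$ or $4$, and \autoref{adjacent_lemma} gives
\[
x\sim_{D^*(G)}y \iff xy=e \ \text{or}\ o(xy)=4 \iff xy\notin H\setminus\{e\}.
\]

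\textbf{Preliminary facts about $H$.}
\begin{enumerate}[label=(\roman*)]
\item $H$ is normal, indeed characteristic, in $G$. Automorphisms preserve the property $x^2=e$, so they map the set $H$ onto itself, and $H$ is a subgroup by hypothesis.
\item $h=|H|$ is a power of $2$ with $h\geq 2$, so $h$ is even.
\item $r=[G:H]-1\geq 1$. Indeed $G\neq H$, because $G$ contains an element of order $4$.
\item For every $x\in G$ we have $x^2\in H$, since $x^4=e$. Therefore $x^{-1}=x\cdot x^{-2}\in xH$, i.e. $x^{-1}H=xH$.
\end{enumerate}

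Let $V_0=H\setminus\{e\}$, which has $h-1$ vertices. Let $V_1,\dots,V_r$ be the $r$ nontrivial cosets of $H$, each with $h$ vertices. Together these partition $V(D^*(G))=G^*$. I will then check the four types of pairs.

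\begin{enumerate}[label=(\alph*)]
\item \emph{Both $x,y\in V_0$.} Here $xy\in H$, and $xy\neq e$ because $y\neq x=x^{-1}$. So $x\not\sim y$, and $V_0$ induces $\overline{K}_{h-1}$.
\item \emph{$x\in V_0$ and $y\notin H$.} Then $xy\notin H$, so $x\sim y$.
\item \emph{$x,y$ in distinct nontrivial cosets.} By normality and fact (iv), $xy\in H$ iff $yH=x^{-1}H=xH$. This fails here, so $xy\notin H$ and $x\sim y$.
\item \emph{$x,y$ in the same nontrivial coset $V_i$.} Now $xy\in H$, so $x\sim y$ exactly when $xy=e$, i.e. $y=x^{-1}$. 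Since $o(x)=4$, we have $x^{-1}\neq x$, and by (iv) $x^{-1}$ lies in $V_i$. Thus the map $x\mapsto x^{-1}$ is a fixed-point-free involution on $V_i$. Consequently $D^*(G)[V_i]$ is a perfect matching, namely $\left(\frac h2\right)K_2$.
\end{enumerate}

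Combining (a)–(d), every pair of distinct blocks is completely joined. The block $V_0$ is independent, and each $V_i$ with $i\geq 1$ induces $\left(\frac h2\right)K_2$. This is exactly the structure of \autoref{infinite_graph_family}, so $D^*(G)\cong X_{h,r}$.

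The only step needing care is (c)/(d). It is the identity $x^{-1}H=xH$ together with normality of $H$ that keeps cross-coset products outside $H$ and keeps inverses inside the same coset. So the main point is establishing fact (iv) and the characteristic property (i) cleanly before the case analysis.
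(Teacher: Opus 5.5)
Your proof is correct and follows essentially the same route as the paper: partition $G^*$ into $H^*$ and the nontrivial cosets of $H$, then verify the four types of pairs (independence of $H^*$, complete joins between distinct blocks, and the inverse-pair matching inside each nontrivial coset). The differences are cosmetic. You obtain the cross-coset and same-coset behaviour directly from $x^{-1}H=xH$, whereas the paper uses the fact that $G/H$ is elementary Abelian to get $C_iC_i=H$ and $C_iC_j\neq H$. You also explicitly check that $h$ is even and $r\geq 1$, which the definition of $X_{h,r}$ requires but the paper leaves implicit.
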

	  
 	\begin{proof}
 		Since $H=\{x\in G:x^2=e\}$
 		is assumed to be a subgroup, every nonidentity element of $H$
 		has order $2$. Moreover, $H$ is abelian. Indeed, for any
 		$x,y\in H$, we have $xy=(xy)^{-1}=y^{-1}x^{-1}=yx.$
 		Therefore, $H$ is an elementary abelian $2$-group.
 		
 		 Let $\varphi\in\operatorname{Aut}(G)$ and $x\in H$. Then $\varphi(x)^2 =\varphi(x^2) = \varphi(e)=e.$
 		Hence $\varphi(x)\in H$. Thus, $H$ is invariant under every
 		automorphism of $G$, and therefore $H$ is characteristic in
 		$G$. Consequently, $H$ is a normal subgroup of $G$, that is $H\trianglelefteq G$.
 
 		Since $G$ has exponent $4$, we have $x^4=e$ for every $x\in G$. Therefore, $(x^2)^2=e,$ and hence $x^2\in H.$ It follows that $(xH)^2=x^2H=H$ for every $x\in G$. Thus, every nonidentity element of $G/H$ has order $2$. Consequently, $G/H$ is an elementary Abelian
 		$2$-group.
 		
 		Put $q=[G:H]
 		\text{ and } r=q-1.$ Then $G/H=\{H,C_1,C_2,\ldots,C_r\},$
 		where $C_1,\ldots,C_r$ are the nontrivial cosets of $H$.
 		Moreover, $|C_i|=|H|=h$ for every $i\in\{1,\ldots,r\}$. Let $C_i=xH$ be a non-trivial coset of $H$. Since $G/H$ is a non-trivial elementary Abelian $2$-group, every non-identity element of $G/H$ has order $2$. Hence, $(C_i)^2=(xH)^2=x^2H=H$, which implies $C_iC_i=H$. 
 			
 			Also, for $C_i=xH$ and $C_j=yH$. Then, $C_iC_j=(xH)\cdot (yH)=(xy)H$. If we assume that $C_iC_j=H$, then $xy\in H$. Equivalently, $yH=x^{-1}H$. Therefore, $C^{-1}_i=C_j$. Since every non-identity element of $G/H$ is of order $2$, thus, $C_i=C^{-1}_i$. Hence, $C_i=C_j$ contradicting $i\neq j$. Thus, $C_iC_j\neq H$ for $i\neq j$. Therefore, $C_iC_j=H$ is impossible for two distinct non-trivial cosets. 
 		
 		Since $G$ is a finite $2$-group, by
 		\autoref{complement_POE}, two distinct vertices $x,y\in G^*$ are adjacent in $D^*(G)$ if and only if $o(xy)\neq2.$

 		We now establish the required structural properties of the graph $D^*(G)$.
 		
 		\medskip
 		\noindent
 		\textbf{(1) The set $H^*$ is independent in the graph $D^*(G)$.}
 		
 		Let $x,y\in H^*$ with $x\neq y$. Since, $H$ is a subgroup, $xy\in H.$ Also, $xy\neq e$, because $xy=e$ would imply $y=x^{-1}=x,$
 		contrary to $x\neq y$. Hence $xy\in H^*$ and $o(xy)=2.$
 		Therefore, $x$ and $y$ are not adjacent in $D^*(G)$, and
 		$H^*$ is independent set of vertices in the graph.
 		
 		\medskip
 		\noindent
 		\textbf{(2) The set of vertices $H^*$ is completely joined to every nontrivial coset $C_i$.}
 		
 		Let $x\in H^*$ and $y\in C_i$. Since $x\in H$ and $H$ is
 		normal, $xy\in C_i.$ Because $C_i$ is a nontrivial coset, it is disjoint from $H$.Thus, $xy\notin H.$
 		Every element outside $H$ has order $4$, and hence $o(xy)=4.$ Therefore, $x\sim_{D^*(G)}y$. Thus,
 		$H^*$ is completely joined to every $C_i$.
 		
 		\medskip
 		\noindent
 		\textbf{(3) Each $C_i$ induces a subgraph isomorphic to $\frac{h}{2}K_2$ in $D^*(G)$.}
 		
 		Fix $i\in\{1,\ldots,r\}$ and let $x,y\in C_i$ be distinct.
 		Since $C_iC_i=H$, $xy\in H.$
 		Hence $o(xy)$ is either $1$ or $2$. Thus, $x\sim_{D^*(G)}y
 		\quad\Longleftrightarrow\quad
 		o(xy)=1
 		\quad\Longleftrightarrow\quad
 		xy=e
 		\quad\Longleftrightarrow\quad
 		y=x^{-1}.$
 		
 		Since $x\notin H$, the element $x$ has order $4$, and therefore $x\neq x^{-1}.$
 		Moreover, $	x^{-1}H=xH,$
 		so $x^{-1}\in C_i$. Hence the inverse map partitions $C_i$
 		into $\frac h2$ disjoint inverse pairs. Therefore, $D^*(G)[C_i]\cong \left(\frac{h}{2}\right)K_2.$
 		
 		\medskip
 		\noindent
 		\textbf{(4) Every Element of $C_i$ is adjacent to every element of $C_j$ for $i\neq j$.}

 		Since $C_iC_j\neq H$, for $i\neq j$, $C_iC_j$ is another nontrivial coset, say $C_k$.
 		Hence $xy\in C_k\subseteq G\setminus H.$ Thus, $o(xy)=4,$
 		and consequently $x$ and $y$ are adjacent in $D^*(G)$.
 		Therefore, every pair of distinct nontrivial cosets is
 		completely joined. It follows that $V(D^*(G)) = H^*\cup C_1\cup\cdots\cup C_r,$ where
 		$D^*(G)[H^*]\cong\overline K_{h-1}$
 		and $D^*(G)[C_i]\cong {\left(\frac{h}{2}\right)K_2}$.  Since every pair of distinct parts is completely joined, we obtain $D^*(G) \cong \overline K_{h-1} \vee \left(\bigvee_{i=1}^{r}{\left(\frac{h}{2}\right)K_2}\right) =X_{h,r}$.

 	\end{proof}

 \begin{corollary}\label{forward_integral}
 	Under the hypotheses of \autoref{forward_realization},
 	the graph $D^{*}(G)$ is connected, $rh$-regular, and integral with  $$\Lambda(D^{*}(G))
 	=
 	\left\{
 	[rh]^1,\,
 	[1-h]^r,\,
 	[1]^{r\left(\frac{h}{2}-1\right)},\,
 	[0]^{h-2},\,
 	[-1]^{\frac{rh}{2}}
 	\right\}.$$
 	\end{corollary}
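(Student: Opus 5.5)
The corollary follows by transferring the graph-theoretic results of the previous section along the isomorphism established in \autoref{forward_realization}. First, under the stated hypotheses ($\exp(G)=4$ and $H=\{x\in G:x^2=e\}$ a subgroup, with $h=|H|$ and $r=[G:H]-1$), \autoref{forward_realization} gives $D^{*}(G)\cong X_{h,r}$. Connectedness, regularity and the adjacency spectrum are all invariant under graph isomorphism. The last point holds because isomorphic graphs have permutation-similar adjacency matrices, $A(D^*(G))=P^{T}A(X_{h,r})P$. Hence it suffices to verify these properties for $X_{h,r}$.

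Before invoking the earlier results, I will check that the parameters fall in the range where $X_{h,r}$ was defined, namely $h$ even with $h\geq 2$ and $r\geq 1$.
\begin{itemize}
\item Since $G$ is a $2$-group, $H$ is a subgroup of a $2$-group, so $h=|H|$ is a power of $2$.
\item Every finite $2$-group with $\exp(G)=4>1$ is nontrivial and contains an involution, so $h\geq 2$; in particular $h$ is even.
\item Since $\exp(G)=4$, some element has order $4$ and therefore lies outside $H$. Thus $[G:H]\geq 2$, giving $r\geq 1$.
\end{itemize}
This parameter check is the only step needing any care.

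With the parameters confirmed, \autoref{X_h_r_basic} shows that $X_{h,r}$ is connected and $rh$-regular. \autoref{Xhr_spectrum} shows that $X_{h,r}$ is integral, with spectrum $\{[rh]^1,[1-h]^r,[1]^{r(\frac h2-1)},[0]^{h-2},[-1]^{\frac{rh}{2}}\}$. Transporting these facts through the isomorphism gives the statement for $D^*(G)$.

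As a consistency check, the regularity degree agrees with \autoref{degree_theorem}: $|G|-1-r_2(G)=(r+1)h-1-(h-1)=rh$.
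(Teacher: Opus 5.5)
Your proof is correct and follows the same route as the paper: \autoref{forward_realization} gives $D^*(G)\cong X_{h,r}$, and connectedness, $rh$-regularity and the spectrum then come from \autoref{X_h_r_basic} and \autoref{Xhr_spectrum}. Your check that $h$ is a power of $2$ with $h\geq 2$ (hence even) and that $r\geq 1$ is sound; the paper leaves this implicit, but it is what places $X_{h,r}$ within the range of \autoref{infinite_graph_family}.
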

 \begin{proof}
 	By \autoref{forward_realization}, $D^{*}(G)\cong X_{h,r}.$ The result follows from \autoref{X_h_r_basic} and \autoref{Xhr_spectrum}.
 	\end{proof}
 
\begin{theorem}\label{converse_realization}
 	 If $D^{*}(G)\cong X_{h,r},$ then $\exp(G)=4$, and $H=\{x\in G:x^{2}=e\}$ is a subgroup of $G$. Moreover, $|H|=h
 	\text{ and }
 	[G:H]=r+1.$
 	\end{theorem}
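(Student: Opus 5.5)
Throughout, as in the rest of this section, $G$ is taken to be a finite $2$-group, so that \autoref{adjacent_lemma}, \autoref{degree_theorem} and \autoref{exponent_corollary} apply. The plan is to read off the group-theoretic data from the graph $X_{h,r}$ in three stages: first the exponent, then $|G|$ and $r_2(G)$ by counting, and finally closure of $H=I(G)\cup\{e\}$, which I expect to be the only real difficulty.

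\textbf{Exponent and counting.} By \autoref{X_h_r_basic}, $X_{h,r}$ is $rh$-regular on $(r+1)h-1$ vertices. Since $D^*(G)\cong X_{h,r}$, the graph $D^*(G)$ is regular, and \autoref{exponent_corollary} gives $\exp(G)\le 4$. Then $|G|-1=(r+1)h-1$, so $|G|=(r+1)h$. By \autoref{degree_theorem} the common degree is $|G|-1-r_2(G)=rh$, hence $r_2(G)=h-1$ and $|H|=h$. This also yields $[G:H]=|G|/|H|=r+1$ once $H$ is known to be a subgroup. To exclude $\exp(G)\le 2$, note that if $G$ were elementary abelian, then for distinct $x,y\in G^*$ the product $xy$ would be a non-identity element of order $2$. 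By \autoref{adjacent_lemma}, $D^*(G)$ would then be edgeless, contradicting $rh\ge 2>0$. Hence $\exp(G)=4$.

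\textbf{Closure of $H$: reduction.} It remains to show that $H$ is closed under multiplication. The case $h=2$ is immediate, since $H=\{e,t\}$ is then a subgroup. For $h\ge 4$ the idea is to identify $I(G)$ with the part $V_0$ of $X_{h,r}$ that induces $\overline{K}_{h-1}$. Once $I(G)=V_0$ is established, closure follows at once: two distinct involutions $x,y$ are non-adjacent in $D^*(G)$, so by \autoref{adjacent_lemma} $o(xy)=2$, i.e.\ $xy\in I(G)\subseteq H$. Together with $x\cdot x=e$, this makes $H$ a subgroup.

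\textbf{Closure of $H$: identifying $V_0$ (main obstacle).} To pin down $V_0$ intrinsically, I would pass to $\Gamma^*(G)=\overline{D^*(G)}\cong K_{h-1}\dot\cup r\bigl(K_h-\frac h2K_2\bigr)$ by \autoref{X_h_r_basic}, which is $(h-2)$-regular. For an element $a$ of order $4$, the vertices $a$ and $a^{-1}$ are non-adjacent in $\Gamma^*(G)$, because $aa^{-1}=e$. I would show that $a^{-1}$ lies in the same $\Gamma^*$-component as $a$, by exhibiting a common $\Gamma^*$-neighbour. The candidate is an element $a^{-1}t$ (or $at$) for a suitable involution $t\neq a^2$, chosen so that both relevant products are involutions, using $a^2\in I(G)$ and $r_2(G)=h-1\ge 3$. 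A component containing a non-adjacent pair cannot be the complete component $K_{h-1}$. Hence every order-$4$ element lies in a cocktail component, so the $h-1$ vertices of $K_{h-1}$ are precisely the $h-1$ involutions, i.e.\ $V_0=I(G)$.

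The step I expect to be hardest is constructing this common neighbour, because products of involutions with $a^{\pm1}$ need not be involutions in a non-abelian group. If it fails, the fallback is a pure counting argument. Under $\exp(G)=4$, each order-$4$ element $a$ has exactly $h-2$ $\Gamma^*$-neighbours $a^{-1}t$ with $t\neq a^2$. I would compare the number of $\Gamma^*$-edges between $I(G)$ and $G\setminus H$ with the structure of the components, and derive that no such edges exist; this again forces $V_0=I(G)$.
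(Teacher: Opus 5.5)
\textbf{Gap: the common neighbour of $a$ and $a^{-1}$ is never constructed.} Your first stage (exponent, $|G|=(r+1)h$, $r_2(G)=h-1$) is correct. Your reduction is also correct. For $h\ge 4$, once the vertex set of the $K_{h-1}$-component of $\Gamma^*(G)$ is shown to be $I(G)$, closure of $H$ follows from \autoref{adjacent_lemma} as you say. A common neighbour of $a$ and $a^{-1}$ would indeed keep every order-$4$ element out of that component.

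The problem is that the step carrying all the group theory is left open. For $y=a^{-1}t$ to be a common $\Gamma^*(G)$-neighbour of $a$ and $a^{-1}$, both $ay=t$ and $a^{-1}y=a^{2}t$ must be involutions. That means you need an involution $t\neq a^{2}$ that \emph{commutes} with $a^{2}$. Neither $a^{2}\in I(G)$ nor $r_2(G)=h-1\ge 3$ supplies this, because counting involutions says nothing about commutation.

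Your counting fallback does not help as sketched. A $\Gamma^*(G)$-edge from an involution $x$ to an element of order $4$ is exactly $x\sim xt$ with $t$ an involution not commuting with $x$. So showing that no such edges exist \emph{is} the statement that involutions commute pairwise, and degree counts alone will not give it.

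\textbf{The fix.} The missing ingredient is the one the paper uses: a nontrivial finite $2$-group has a central involution $z$. If $z\neq a^{2}$, take $t=z$. If $z=a^{2}$, then $a^{2}$ is central, and any of the $h-2\ge 2$ involutions $t\neq a^{2}$ works. Then $y=a^{-1}t$ is a valid vertex: it differs from $a$ (as $t\neq a^{2}$), from $a^{-1}$ (as $t\neq e$), and from $e$ (as $o(a)=4$). Moreover $o(ay)=o(a^{-1}y)=2$. With this inserted, your proof is complete.

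\textbf{Comparison with the paper.} Once completed, your route genuinely differs from the paper's. The paper uses $z$ only to place all involutions in a single component of $\Gamma^*(G)$. It then shows that a non-commuting pair of involutions $a,b$ yields a second involution $b(ab)^{2}$ that also fails to commute with $a$. That is impossible both in $K_{h-1}$ and in $K_h-\frac{h}{2}K_2$, where each vertex has at most one non-neighbour. Your argument avoids this auxiliary construction and directly identifies the independent part $V_0$ of $X_{h,r}$ with $I(G)$. The paper's argument instead proves pairwise commutation of involutions head-on.
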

 \begin{proof}
 	Suppose $D^{*}(G)\cong X_{h,r}.$ By \autoref{X_h_r_basic}, the graph $X_{h,r}$ has $(r+1)h-1$ vertices and is $rh$-regular. Hence $|G|=(r+1)h.$ Since $D^*(G)\cong X_{h,r}$ is regular, \autoref{exponent_corollary} implies that $\exp(G)\leq 4.$
 	We first discard the case $\exp(G)=2$. If $\exp(G)=2$, every non-identity element of $G$ is an involution, and hence $r_2(G)=|G|-1.$ By \autoref{degree_theorem}, every vertex of $D^*(G)$ then has degree $|G|-1-r_2(G)=0,$ which contradicts the fact that $X_{h,r}$ is $rh$-regular with $r,h\geq1$. Therefore, $\exp(G)=4.$
 	
 	Consequently, every non-identity element of $G$ has order either $2$ or $4$. Hence, by \autoref{degree_theorem}, every vertex of $D^*(G)$ has degree $|G|-1-r_2(G).$ Since $D^*(G)$ is $rh$-regular, we obtain $rh=|G|-1-r_2(G).$ Using $|G|=(r+1)h$, it follows that $rh=(r+1)h-1-r_2(G),$ and therefore $r_2(G)=h-1.$ Thus $|H|=r_2(G)+1=h.$ 
 	
 	Now, we shall prove that $H$ is a subgroup of $G$. If $h=2$, then $G$ has exactly one involution. Hence $H$ consists of
 	the identity and this unique involution. Therefore, $H$ is a
 	subgroup of order $2$. Putting $h\geq 4$ in \autoref{complement_POE}, we have, $\Gamma^{*}(G)\cong \overline{X_{h,r}}.$ From \autoref{X_h_r_basic}, $\overline{X_{h,r}}
 	\cong
 	K_{h-1}\,\dot{\cup}\,
 	r\left(K_h-{\left(\frac{h}{2}\right)K_2}\right).$ For $h\geq4$, each graph $K_h-{\left(\frac{h}{2}\right)K_2}$ is connected. Choose a central involution $z\in Z(G)$. Such an element exists since
 	$G$ is a nontrivial finite $2$-group. If $t$ is any involution
 	distinct from $z$, then $zt=tz$ and $(zt)^2=e$. Since $zt\neq e$, we have $o(zt)=2$. Hence $z\sim_{\Gamma^{*}(G)}t.$ Thus, all $h-1$ involutions of $G$ belong to the same connected
 	component of $\Gamma^{*}(G)$.
 	
 	We claim that every two involutions of $G$ commute. Suppose, to the
 	contrary, that $a$ and $b$ are non-commuting involutions. Then
 	$o(ab)=4$. Put $c=ab
 	\text{ and }
 	u=c^2.$ The element $u$ is an involution and is central in
 	$\langle a,b\rangle$. Hence $b'=bu$ is also an involution, with $b'\neq b$. Moreover, $ab'=abu=cu=c^3=c^{-1},$whereas $b'a=bua=bau=c^{-1}u=c.$ Since $c\neq c^{-1}$, the involutions $a$ and $b'$ do not commute.
 	Thus $a$ fails to commute with at least two distinct involutions, namely $b$ and $b'$.
 	
 	On the other hand, all involutions lie in one connected component of $\Gamma^{*}(G)$. If this component is $K_{h-1}$, then all involutions are pairwise adjacent and hence pairwise commute. If it is one of the graphs $K_h-{\left(\frac{h}{2}\right)K_2}$, then every vertex has exactly one non-neighbour
 	within that component. Consequently, an involution can fail to
 	commute with at most one other involution. This contradicts the
 	preceding claim. Therefore all involutions of $G$ commute pairwise.
 	
 	Now, let $x,y\in H$. If either $x=e$ or $y=e$, then $xy\in H$. If $x$ and $y$ are non-identity involutions, then $xy=yx$, and hence $(xy)^2=x^2y^2=e.$ Thus $xy\in H$. Therefore $H$ is closed under multiplication.
 	Since every element of $H$ is its own inverse, $H$ is a subgroup of $G$. Finally, $[G:H] =\frac{|G|}{|H|} =\frac{(r+1)h}{h}
 	= r+1.$ Hence $|H|=h
 	\text{ and }
 	[G:H]=r+1.$
 	\end{proof}

 	\begin{theorem}\label{exact_characterization}
 	 $D^{*}(G)\cong X_{h,r}$ if and only if $\exp(G)=4$ and $H(G)=\{x\in G:x^{2}=e\}$ is a subgroup of $G$ satisfying $|H(G)|=h
 	\text{ and }
 	[G:H(G)]=r+1.$ Equivalently, $D^{*}(G)
 	\cong
 	\overline{K}_{h-1}\vee
 	\left(
 	\bigvee_{i=1}^{r}{\left(\frac{h}{2}\right)K_2}
 	\right)$ if and only if $G$ has exponent $4$ and the identity together with all involutions of $G$ forms a subgroup of order $h$ and index $r+1$. Also, the adjacency spectrum of $D^*(G)$ determines $|G|,r_2(G),|H(G)|$, and $[G:H(G)]$; precisely, if $n=|V(D^*(G))|$ and $\rho$ is its adjacency spectral radius, then $|G|=n+1; |H(G)|=n-\rho+1, r_2(G)=n-\rho,$ and $[G:H(G)]=1+\frac{\rho}{n-\rho+1}$.
 	\end{theorem}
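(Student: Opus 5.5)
The equivalence itself is obtained by combining the two preceding theorems, and the spectral formulas follow from vertex counting together with regularity. For the direction ``$\Leftarrow$'', assume $\exp(G)=4$ and that $H(G)$ is a subgroup with $|H(G)|=h$ and $[G:H(G)]=r+1$. Then \autoref{forward_realization} applies with exactly these values of $h$ and $r=[G:H(G)]-1$, so $D^*(G)\cong X_{h,r}$. For the direction ``$\Rightarrow$'', \autoref{converse_realization} gives $\exp(G)=4$, shows that $H(G)$ is a subgroup, and yields $|H(G)|=h$ and $[G:H(G)]=r+1$. The ``equivalently'' reformulation only restates $X_{h,r}$ via \autoref{infinite_graph_family} and $H(G)$ as the identity together with $I(G)$. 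One remark is needed: \autoref{forward_realization} yields a genuine member of the family, since $h=|H(G)|$ is even and at least $2$ (as $H(G)$ is a nontrivial $2$-group), and $r\geq 1$ because $\exp(G)=4$ forces $H(G)\neq G$.

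For the spectral statement, assume $D^*(G)\cong X_{h,r}$ and put $n=|V(D^*(G))|$. By definition $V(D^*(G))=G^*$, so $|G|=n+1$. Next, by \autoref{exponent_corollary} the graph $D^*(G)$ is regular, and since $\exp(G)=4$, \autoref{degree_theorem} shows that every vertex has degree $|G|-1-r_2(G)=n-r_2(G)$. For a regular graph the spectral radius $\rho$ equals the common degree, because the all-ones vector is an eigenvector (\autoref{Rayleigh_quotient}); alternatively, \autoref{Xhr_spectrum} gives $\rho=rh$ directly. Therefore $\rho=n-r_2(G)$, which gives $r_2(G)=n-\rho$ and $|H(G)|=r_2(G)+1=n-\rho+1$. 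Finally, $[G:H(G)]=|G|/|H(G)|=(n+1)/(n-\rho+1)=1+\rho/(n-\rho+1)$. As a consistency check, these agree with $|G|=(r+1)h$, $\rho=rh$ and $h=n-\rho+1$.

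I do not expect a genuine obstacle, since the substantive work is already in \autoref{forward_realization} and \autoref{converse_realization}. The two points needing care are confirming that $\rho$ equals the degree rather than merely bounding it, which is handled by regularity, and checking that the parameters produced in the ``$\Leftarrow$'' direction are admissible for \autoref{infinite_graph_family}.
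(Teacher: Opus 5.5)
Your proposal is correct and follows the paper's proof: both directions of the equivalence come, as you say, from Theorem~\ref{forward_realization} and Theorem~\ref{converse_realization}. The only difference is in the spectral formulas. You get $r_2(G)=n-\rho$ directly from the degree formula of Theorem~\ref{degree_theorem}, whereas the paper reads $\rho=rh$ and $n=(r+1)h-1$ off Lemma~\ref{X_h_r_basic} and solves for $h$ and $r$. This is an inessential variation, though note that the all-ones eigenvector alone only shows the degree is an eigenvalue (a lower bound for $\rho$), so your fallback via Theorem~\ref{Xhr_spectrum} is the clean way to get $\rho=rh$.
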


 	\begin{proof}
 	Let $\exp(G)=4$ and $H(G)=\{x\in G:x^2=e\}$ is a subgroup satisfying $|H(G)|=h,[G:H(G)]=r+1.$ Then \autoref{forward_realization} gives $D^{*}(G)\cong X_{h,r}.$

 	Conversely, let $D^{*}(G)\cong X_{h,r},$ then by \autoref{converse_realization}, $\exp(G)=4, H(G)\leq G$, together with $|H(G)|=h \text{ and } [G:H(G)]=r+1$. Hence the two conditions are equivalent. Also, $r_2(G)=h-1$.
 	
 	By \autoref{X_h_r_basic}, the graph $X_{h,r}$ is
 		$rh$-regular and has $n=(r+1)h-1$ vertices. Since $X_{h,r}$ is connected, its spectral radius equals
 		its regular degree. Hence $\rho=rh.$ Therefore, $n-\rho
 		=(r+1)h-1-rh = h-1,$ which gives $h=n-\rho+1.$ Since $\rho=rh$, it follows that $r=\frac{\rho}{h}=\frac{\rho}{\,n-\rho+1\,}$ and so, $[G:H(G)]=r+1=1+ \frac{\rho}{\,n-\rho+1\,}$. Thus both parameters are determined by the adjacency spectrum. 
 	\end{proof}
 	
 	\begin{remark}\label{spectral_information_loss}Although the adjacency spectrum determines the parameters $h$ and $r$, and therefore the quantities above, it does not in general determine the isomorphism type of the realizing group. Indeed, there are several non-isomorphic groups having exactly the same graph and hence the same adjacency spectrum.
 	\end{remark}
 	
\begin{corollary}\label{parameter_restriction}
 Let $D^{*}(G)\cong X_{h,r}$.Then, there exist positive integers $m$ and $d$ such that $h=2^{m} \text{ and } r=2^{d}-1.$ Equivalently, every graph in the family $X_{h,r}$ that is realizable as $D^{*}(G)$ has the form $X_{2^{m},\,2^{d}-1}.$
 \end{corollary}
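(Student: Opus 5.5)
The plan is to derive the result directly from \autoref{converse_realization} (equivalently \autoref{exact_characterization}) together with Lagrange's theorem. Suppose $D^{*}(G)\cong X_{h,r}$. Here $G$ is understood to be a finite $2$-group, as throughout this section, so $|G|=2^{n}$ for some $n\in\mathbb{N}$. By \autoref{converse_realization}, $\exp(G)=4$, the set $H=H(G)=\{x\in G:x^2=e\}$ is a subgroup of $G$, $|H|=h$, and $[G:H]=r+1$.

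First, since $H\le G$, Lagrange's theorem gives $|H|\mid |G|=2^n$. Hence $h=2^{m}$ for some $0\le m\le n$. The definition of the family requires $h\ge 2$. This also follows directly: $G$ is a nontrivial $2$-group and so contains an involution. Therefore $m\ge 1$.

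Second, $[G:H]=|G|/|H|=2^{n-m}$. Putting $d=n-m$, we get $r+1=2^{d}$, that is, $r=2^{d}-1$. To see that $d$ is positive, note that $r\ge 1$ by the standing assumption on the family, so $2^d=r+1\ge 2$ and $d\ge1$. This is consistent with $\exp(G)=4$: some element has order $4$, lies outside $H$, and so $H\ne G$. Together these give the form $X_{2^m,\,2^d-1}$ with $m,d\ge1$.

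There is essentially no obstacle beyond what \autoref{converse_realization} has already done. The one point needing care is the hidden hypothesis that $G$ is a $2$-group. This is needed both to invoke \autoref{adjacent_lemma} and \autoref{degree_theorem} inside \autoref{converse_realization}, and to conclude that divisors of $|G|$ are powers of $2$. I would state this hypothesis explicitly at the start of the proof. As an optional remark, $G/H$ is elementary abelian of order $2^d$, as shown in the proof of \autoref{forward_realization}, which gives a structural reason for the form of $r$.
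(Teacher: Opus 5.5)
Your proof is correct, but it reaches the powers of $2$ by a different route from the paper's. After invoking \autoref{exact_characterization}, the paper argues structurally. Every element of $H(G)$ has order at most $2$, so $H(G)\cong\mathbb{Z}_2^{m}$. Next, $H(G)$ is characteristic and hence normal, and $\exp(G)=4$ gives $x^2\in H(G)$ for all $x$. So $G/H(G)$ is elementary abelian, $G/H(G)\cong\mathbb{Z}_2^{d}$, and therefore $r+1=2^d$.

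You instead use only Lagrange's theorem together with $|G|=2^n$: $|H|$ divides $2^n$, and $[G:H]=2^{n-m}$. Your route is shorter. It needs neither the normality of $H(G)$ nor any further use of the exponent beyond what \autoref{converse_realization} already supplies. You also justify $m\geq 1$ and $d\geq 1$ explicitly, which the paper only asserts; its claim that $d$ is positive tacitly uses $G/H(G)\neq 1$, which follows from $r\geq 1$ or $\exp(G)=4$.

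The paper's route yields the extra structural facts $H(G)\cong\mathbb{Z}_2^m$ and $G/H(G)\cong\mathbb{Z}_2^d$, which you mention only as an optional remark. It also never needs to know in advance that $|G|$ is a power of $2$. As you correctly note, though, the $2$-group hypothesis is needed anyway to obtain \autoref{converse_realization}. Once $\exp(G)=4$ is known, $G$ is a $2$-group by Cauchy's theorem in any case.
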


\begin{proof}
	By \autoref{exact_characterization}, $H=H(G)=\{x\in G:x^{2}=e\}$ is a subgroup of $G$ satisfying $|H|=h \text{ and } [G:H]=r+1$. Every element of $H$ has order at most $2$. Hence $H$ is an
	elementary abelian $2$-group. Therefore, $H\cong \mathbb{Z}_{2}^{\,m}$ for some positive integer $m$, and consequently $h=|H|=2^{m}.$ Moreover, $H$ is characteristic in $G$, because every automorphism of
	$G$ preserves the property $x^{2}=e$. Hence $H\trianglelefteq G$ and
	the quotient group $G/H$ is well defined.
	Since $\exp(G)=4$, for every $x\in G$ we have $(x^{2})^{2}=x^{4}=e,$ and thus, $x^2\in H$. It follows that $(xH)^{2}=x^{2}H=H$ for every $xH\in G/H$. Thus, every non-identity element of $G/H$ has
	order $2$, which implies that, $G/H$ is an elementary Abelian $2$-group.
	Therefore, $G/H\cong \mathbb{Z}_{2}^{\,d}$ for some positive integer $d$. Consequently, $r+1=[G:H]=|G/H|=2^{d},$ and hence $r=2^d-1$.  
	\end{proof}

 	\begin{remark}\label{realizable_family}
 		The family $X_{h,r}$ is defined purely graph-theoretically for every even integer $h\geq 2$ and every $r\geq 1$. \autoref{Xhr_spectrum} shows that each such graph is integral. However, \autoref{parameter_restriction} shows that only a restricted subfamily can arise as a Proper POE Complement Graph of a finite $2$-group. In particular, a realizable graph must have the form $X_{2^{m},\,2^{d}-1}.$ This leads us to the following  realization problem:   
 		\begin{enumerate}
 			\item for which pairs
 			$(m,d)$, does there exist a finite $2$-group $G$ such that $D^{*}(G)\cong X_{2^{m},\,2^{d}-1}$?
 			\item how many non-isomorphic groups can realize the same graph?
 		\end{enumerate}   
 		\end{remark}
 	
 	\subsection{Realizations of the Graphs $X_{2^m,\,2^d-1}$}
 	
 	By \autoref{parameter_restriction}, every member of the family $X_{h,r}$ that arises as a \emph{Proper POE Complement Graph} of a finite $2$-group must have the form $X_{2^m,\,2^d-1}$, where $m,d\in \mathbb{N}$. We now check the converse realization problem. We first determine the Abelian realization completely and then construct non-Abelian realizations. The graph $X_{2^m,\,2^d-1}$ has $|V(X_{2^m,\,2^d-1})|=2^{m+d}-1$ vertices and $|E(X_{2^m,\,2^d-1})| = 2^{m-1}(2^d-1)(2^{m+d}-1)$ edges. One specific example of non-Abelian realization is shown in \autoref{order16-full}.

 	\subsubsection{Abelian Realizations}
 	
 \begin{theorem}\label{abelian_realization}
 			The graph $X_{2^m,\,2^d-1}$ admits an Abelian realization as $D^*(G)$ for a finite Abelian $2$-group $G\cong \mathbb Z_4^{\,d}\times \mathbb Z_2^{\,m-d}$ which is unique up to isomorphism if and only if $1 \leq d\leq m$. 
 	\end{theorem}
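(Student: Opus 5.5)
By \autoref{exact_characterization}, the statement reduces to a pure group-theoretic question. We must decide when a finite Abelian $2$-group $G$ has $\exp(G)=4$ and $H(G)=\{x:x^2=e\}$ of order $2^m$ and index $2^d$. For an Abelian group, $H(G)$ is automatically a subgroup, since $(xy)^2=x^2y^2$. So only the exponent, $|H(G)|$ and $[G:H(G)]$ need to be checked.

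The plan is to invoke the structure theorem for finite Abelian groups. Any Abelian $2$-group of exponent at most $4$ has the form $G\cong \mathbb Z_4^{\,a}\times\mathbb Z_2^{\,b}$ with $a,b\geq 0$, and $\exp(G)=4$ exactly when $a\geq1$.

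We then compute $H(G)$ coordinatewise. In $\mathbb Z_4$ the elements with $2x=0$ are $\{0,2\}$, and in $\mathbb Z_2$ every element qualifies. Hence $H(G)\cong \mathbb Z_2^{\,a+b}$, giving $|H(G)|=2^{a+b}$ and $[G:H(G)]=2^{2a+b}/2^{a+b}=2^a$.

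The realization conditions $|H|=2^m$ and $[G:H]=2^d$ therefore become the linear system $a=d$ and $a+b=m$. This forces $a=d$ and $b=m-d$.

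For the \emph{if} direction, assume $1\leq d\leq m$. Then $a=d\geq1$ and $b=m-d\geq0$ are admissible, so $G=\mathbb Z_4^{\,d}\times\mathbb Z_2^{\,m-d}$ satisfies the hypotheses of \autoref{forward_realization}. That theorem gives $D^*(G)\cong X_{2^m,2^d-1}$.

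For the \emph{only if} direction, together with uniqueness, suppose an Abelian $G$ realizes $X_{2^m,2^d-1}$. By \autoref{converse_realization}, $\exp(G)=4$, $|H(G)|=2^m$ and $[G:H(G)]=2^d$. The computation above then forces the invariants $(a,b)=(d,m-d)$. Since $b\geq0$, we get $d\leq m$, and $d\geq1$ holds by \autoref{parameter_restriction}. Because the invariants of a finite Abelian group determine it up to isomorphism, $G\cong\mathbb Z_4^{\,d}\times\mathbb Z_2^{\,m-d}$ is unique.

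The main obstacle is interpretive rather than technical. The statement bundles existence and uniqueness under one \emph{if and only if}, so the write-up must state explicitly that uniqueness holds whenever the realization exists. It must also state that no Abelian realization exists when $d>m$, because then $b=m-d<0$ is impossible. The only genuine computation is $|H(G)|$, and I would present it via the isomorphism $H(G)\cong\mathbb Z_2^{\,a+b}$.
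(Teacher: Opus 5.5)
Your proposal is correct and follows essentially the same route as the paper: write an exponent-$4$ Abelian $2$-group as $\mathbb Z_4^{\,a}\times\mathbb Z_2^{\,b}$ with $a\geq1$, compute $|H(G)|=2^{a+b}$ and $[G:H(G)]=2^a$, then match $a=d$ and $b=m-d\geq0$ using the realization criterion in both directions. The only differences are cosmetic: you cite \autoref{forward_realization} and \autoref{converse_realization} separately rather than their combination \autoref{exact_characterization}, and you note explicitly that $H(G)$ is automatically a subgroup in the Abelian case.
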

 	
 	\begin{proof}
 	
 			Let $1\leq d\leq m$, and set $G=\mathbb Z_4^{\,d}\times
 			\mathbb Z_2^{\,m-d}.$ Clearly, $\exp(G)=4$. Let $H(G)=\{x\in G:x^2=e\}.$ In each copy of $\mathbb Z_4$, exactly two elements are annihilated by
 			squaring, while every element of $\mathbb Z_2$ has order at most $2$.
 			Hence $H(G)\cong
 			\mathbb Z_2^{\,d}\times
 			\mathbb Z_2^{\,m-d}
 			\cong
 			\mathbb Z_2^{\,m}.$ Therefore, $|H(G)|=2^m.$ Moreover, $|G|
 			= 4^d\,2^{m-d}
 			= 2^{m+d},$ and consequently, $[G:H(G)]
 			= \frac{2^{m+d}}{2^m}
 			= 2^d.$ By \autoref{exact_characterization}, $D^{*}(G)
 			\cong
 			X_{2^m,\,2^d-1}.$
 		
 		Conversely, let $G$ be a finite Abelian $2$-group satisfying $D^{*}(G)\cong X_{2^m,\,2^d-1}.$ By \autoref{exact_characterization}, $\exp(G)=4,
 			|H(G)|=2^m,
 			[G:H(G)]=2^d.$ Since, $G$ is a finite Abelian $2$-group of exponent $4$, there exist
 			non-negative integers $a,b$, with $a\geq1$, such that $G\cong
 			\mathbb Z_4^{\,a}\times
 			\mathbb Z_2^{\,b}.$ For this group, $H(G)\cong
 			\mathbb Z_2^{\,a+b},$ and hence $|H(G)|=2^{a+b}.$ Also, $[G:H(G)]=2^a.$ Comparing these expressions with $|H(G)|=2^m
 			\text{ and }
 			[G:H(G)]=2^d,$ we obtain $a=d$ and $a+b=m$. Therefore, $b=m-d$. Since $b\geq0$, necessarily $d\leq m$. Hence $G\cong
 			\mathbb Z_4^{\,d}\times
 			\mathbb Z_2^{\,m-d}.$ Combining we get, an Abelian realization exists, and is unique up to isomorphism, if and only if $1\leq d\leq m$.
 			\end{proof}
 	
 	\subsubsection{Non-Abelian Realizations}
 	
 	\begin{theorem}\label{Q_8_realization}
 			If $ G_m=Q_8\times \mathbb Z_2^{\,m-1},$ then $D^*(G_m)\cong X_{2^m,\,3}$, as graphs, when $m\geq 1$.
 	\end{theorem}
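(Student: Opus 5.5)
Following the pattern of \autoref{abelian_realization}, the plan is to check the hypotheses of \autoref{exact_characterization} (equivalently \autoref{forward_realization}) for $G_m=Q_8\times\mathbb Z_2^{\,m-1}$. Concretely, we must verify three things: $\exp(G_m)=4$; the set $H(G_m)=\{x\in G_m:x^2=e\}$ is a subgroup; and $|H(G_m)|=2^m$ together with $[G_m:H(G_m)]=4=3+1$. Once these hold, \autoref{exact_characterization} gives $D^*(G_m)\cong X_{2^m,3}$ directly.

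\textbf{Step 1: the exponent.} Since $Q_8$ has exponent $4$ and $\mathbb Z_2^{\,m-1}$ has exponent at most $2$, and orders in a direct product are least common multiples of the componentwise orders, we get $\exp(G_m)=\operatorname{lcm}(4,2)=4$. When $m=1$, $G_1=Q_8$ and the factor $\mathbb Z_2^{0}$ is trivial, so this still holds.

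\textbf{Step 2: the set $H(G_m)$.} An element $(q,v)$ satisfies $(q,v)^2=(q^2,v^2)=(q^2,e)$, because every element of $\mathbb Z_2^{\,m-1}$ squares to the identity. Hence $(q,v)^2=e$ if and only if $q^2=1$ in $Q_8$, that is, $q\in\{1,-1\}=Z(Q_8)$, using the fact recalled in the preliminaries that $-1$ is the unique involution of $Q_8$. Therefore
\[
H(G_m)=\{\pm1\}\times\mathbb Z_2^{\,m-1},
\]
which is a direct product of subgroups and so a subgroup, isomorphic to $\mathbb Z_2^{\,m}$. This is the step that actually uses the structure of $Q_8$: the involutions together with $e$ form a subgroup only because $Q_8$ has a unique involution. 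Once that fact is invoked, the step is routine.

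\textbf{Step 3: the counting.} We have $|H(G_m)|=2\cdot2^{m-1}=2^m$ and $|G_m|=8\cdot2^{m-1}=2^{m+2}$, so $[G_m:H(G_m)]=4$. Applying \autoref{exact_characterization} with $h=2^m$ and $r=3$ yields $D^*(G_m)\cong X_{2^m,3}$. This value of $r$ is consistent with \autoref{parameter_restriction}, since $3=2^2-1$ corresponds to $d=2$. Finally, we would note that $G_m$ is non-Abelian, which is what places this result under the non-Abelian realizations: for $m\ge2$ it realizes the same graph as the Abelian group $\mathbb Z_4^{2}\times\mathbb Z_2^{\,m-2}$ from \autoref{abelian_realization}, while for $m=1$ we have $d=2>m$, so no Abelian realization exists.
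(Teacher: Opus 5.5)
Your proposal is correct and follows essentially the same route as the paper's proof. Both compute $\exp(G_m)=4$, identify $H(G_m)=\{\pm1\}\times\mathbb Z_2^{\,m-1}\cong\mathbb Z_2^{\,m}$ using that $-1$ is the unique involution of $Q_8$, obtain $[G_m:H(G_m)]=2^{m+2}/2^m=4$, and then invoke \autoref{exact_characterization} with $h=2^m$ and $r=3$.
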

 	
 	\begin{proof}
 		Since $Q_8$ is non-Abelian, $G_m$ is non-Abelian and $\exp(G_m)=4$. The only elements of $Q_8$ whose squares are the identity are $1$ and $-1$. Since every element of $\mathbb Z_2^{\,m-1}$ has order at most $2$, we obtain $ H(G_m) = \{x\in G_m:x^2=e\} = \{1,-1\}\times\mathbb Z_2^{\,m-1}.$ Hence $H(G_m)\cong \mathbb{Z}_{2}^m$ and therefore $|H(G_m)|=2^m$. Furthermore, $|G_m|= |Q_8|\,|\mathbb Z_2^{\,m-1}|=2^3\cdot 2^{m-1}=2^{m+2}$. Thus, $[G_m:H(G_m)] = \frac{2^{m+2}}{2^m} = 4.$ Consequently, $[G_m:H(G_m)]-1=3$. By \autoref{exact_characterization}, $D^*(G_m)\cong X_{2^m,\,3}$.
 	\end{proof}
 	
 	The above theorem indicates that for every integer $m\geq 1$, the graph $X_{2^m,3}$ admits a non-Abelian realization. Moreover, for every $m\geq 2$, \autoref{abelian_realization} and \autoref{Q_8_realization} indicate that the graph $X_{2^m,3}$ admits both Abelian and Non-Abelian realization. More precisely, $D^*\left(\mathbb Z_4^2\times\mathbb Z_2^{m-2}\right) \cong D^*\left(Q_8\times\mathbb Z_2^{m-1}\right) \cong X_{2^m,3}.$ The two realizing groups are non-isomorphic, since the first is Abelian whereas the second is non-Abelian.

 	\begin{theorem}\label{general_nonabelian_realization}
 			 Consider $G = Q_8^{\,a}\times
 			\mathbb Z_4^{\,c}\times \mathbb Z_2^{\,b}$, where $a\geq 1$ and $b,c\geq 0$.  Set $m=a+b+c \text{ and } d=2a+c.$ Then $G$ is non-Abelian and $D^*(G)\cong X_{2^m,\,2^d-1}$. 
 	\end{theorem}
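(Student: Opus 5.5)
The plan is to reduce everything to the criterion of \autoref{exact_characterization}. It suffices to check three facts about $G=Q_8^{\,a}\times\mathbb Z_4^{\,c}\times\mathbb Z_2^{\,b}$:
\begin{enumerate}
\item $\exp(G)=4$;
\item $H(G)=\{x\in G:x^2=e\}$ is a subgroup of $G$;
\item $|H(G)|=2^m$ and $[G:H(G)]=2^d$.
\end{enumerate}
Non-Abelianness is immediate. Since $a\geq1$, $G$ contains a copy of $Q_8$ as a direct factor, and for instance $(a_1,e,\ldots,e)$ and $(b_1,e,\ldots,e)$ do not commute, where $a_1,b_1$ are the generators of the first $Q_8$ factor.

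For the exponent, I will use that in a direct product $x^2=e$ holds coordinatewise, so $x^2=e$ if and only if each coordinate squares to the identity. The same holds for $x^4=e$. Every element of $Q_8$, $\mathbb Z_4$ and $\mathbb Z_2$ satisfies $y^4=e$, so $\exp(G)\leq 4$. Equality holds because $Q_8$ contains elements of order $4$.

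For $H(G)$, the coordinatewise description gives
\[
H(G)=\{\pm1\}^{a}\times (2\mathbb Z_4)^{c}\times \mathbb Z_2^{\,b}.
\]
Here I use that the only elements of $Q_8$ squaring to $1$ are $\pm1$, as already noted in the proof of \autoref{Q_8_realization}, and that the elements of $\mathbb Z_4$ of order at most $2$ are $\{0,2\}$. This set is a product of subgroups of the factors, so it is a subgroup of $G$, isomorphic to $\mathbb Z_2^{\,a+c+b}=\mathbb Z_2^{\,m}$. Hence $|H(G)|=2^m$.

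For the index, $|G|=8^a4^c2^b=2^{3a+2c+b}$, so
\[
[G:H(G)]=2^{3a+2c+b-(a+b+c)}=2^{2a+c}=2^d.
\]
Applying \autoref{exact_characterization} with $h=2^m$ and $r=2^d-1$ then yields $D^*(G)\cong X_{2^m,\,2^d-1}$. Alternatively one can apply \autoref{forward_realization} directly, since $\exp(G)=4$ and $H(G)\leq G$.

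There is no real obstacle here. The only point needing care is that $H(G)$ is closed under multiplication even though $G$ is non-Abelian. This is settled by the explicit product description: each factor's set of square-roots of the identity is a subgroup ($\{\pm1\}$ is central in $Q_8$), and products of subgroups of direct factors are subgroups. I will also note that $d=2a+c\geq2$ and $m\geq1$, so the parameters are admissible for \autoref{infinite_graph_family}.
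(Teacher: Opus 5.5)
Your proposal is correct and follows essentially the same route as the paper. Like the paper, you verify $\exp(G)=4$, identify $H(G)\cong\mathbb Z_2^{\,a+b+c}$ coordinatewise, compute $[G:H(G)]=2^{2a+c}$, and then invoke \autoref{exact_characterization}; your explicit check that $H(G)$ is a product of subgroups of the factors, and your remark that the parameters are admissible, make precise two points the paper leaves implicit.
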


 \begin{proof}
 	    Since $a\geq1$, the group $G$ contains a direct factor isomorphic to $Q_8$, and hence $G$ is non-Abelian. Moreover, $\exp(G)=4$. For each copy of $Q_8$, the elements whose squares are the identity, form the subgroup $\{1,-1\}\cong \mathbb{Z}_2$. In each copy of $\mathbb{Z}_4$ the elements whose squares are the
 		identity form a subgroup isomorphic to $\mathbb Z_2$, while every element of $\mathbb Z_2$ has square equal to the identity. Therefore, $H(G)=\{x\in G:x^2=e\}
 		\cong \mathbb Z_2^{\,a+c+b}$ and $|H(G)|
 		= 2^{a+b+c} =2^m.$ On the other hand, $|G|
 		= 8^a4^c2^b =2^{3a+2c+b}.$ Consequently, $[G:H(G)]
 		=2^{3a+2c+b-(a+c+b)} =2^{2a+c}=2^d.$ Thus, $[G:H(G)]-1=2^d-1.$ By \autoref{exact_characterization}, $D^{*}(G) \cong
 		X_{2^m,\,2^d-1}.$
 \end{proof}
 
 \begin{corollary}\label{non-abelian_range}
 		Let $m,d\in \mathbb{N}$ such that $2\leq d\leq 2m.$ Then the graph $X_{2^m,\,2^d-1}$ admits a non-Abelian realization.
 \end{corollary}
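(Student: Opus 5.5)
The plan is to use \autoref{general_nonabelian_realization} directly. The task is to show that, for every pair $(m,d)$ with $2\leq d\leq 2m$, there are integers $a\geq 1$ and $b,c\geq 0$ with
\[
a+b+c=m, \qquad 2a+c=d .
\]
Once such integers are found, \autoref{general_nonabelian_realization} shows that $G=Q_8^{\,a}\times\mathbb Z_4^{\,c}\times\mathbb Z_2^{\,b}$ is non-Abelian and satisfies $D^*(G)\cong X_{2^m,\,2^d-1}$. So the proof reduces to a small integer-solvability argument.

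I would split into two cases according to the size of $d$.

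\textbf{Case 1: $2\leq d\leq m+1$.} Take $a=1$, $c=d-2$, and $b=m-1-c=m+1-d$. Then $c\geq 0$ because $d\geq 2$, and $b\geq 0$ because $d\leq m+1$. By construction $a+b+c=m$ and $2a+c=d$.

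\textbf{Case 2: $m+1< d\leq 2m$.} Take $b=0$. The equations then become $a+c=m$ and $2a+c=d$, which give $a=d-m$ and $c=2m-d$. Here $a\geq 2\geq 1$ and $c\geq 0$, since $d\leq 2m$.

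A cleaner uniform choice also works. Set
\[
a=\max(1,\,d-m),\qquad c=d-2a,\qquad b=m-a-c=m-d+a .
\]
I would check the three required inequalities in both regimes:
\begin{itemize}
\item If $a=1$: then $c=d-2\geq 0$, and $b=m-d+1\geq 0$ holds precisely because $d\leq m+1$ in this regime.
\item If $a=d-m$: then $c=2m-d\geq 0$ and $b=0$.
\end{itemize}

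There is no real obstacle. The only care needed is the boundary check that the nonnegativity constraints hold exactly on the range $2\leq d\leq 2m$. This also explains the hypotheses of the corollary:
\begin{itemize}
\item $d\geq 2$ is forced, because a $Q_8$ factor already contributes $2$ to $d$.
\item $d\leq 2m$ is forced, because $d=2a+c\leq 2(a+b+c)=2m$.
\end{itemize}
After exhibiting $(a,b,c)$, I would conclude by invoking \autoref{general_nonabelian_realization}.
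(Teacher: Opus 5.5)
Your proof is correct and is essentially the paper's: the same split at $d=m+1$, with $(a,c,b)=(1,\,d-2,\,m-d+1)$ in the lower range and $(a,c,b)=(d-m,\,2m-d,\,0)$ in the upper range, followed by an appeal to \autoref{general_nonabelian_realization}; your $\max$-formula only packages the two cases together. One caution: your remark that $d\geq 2$ is ``forced'' holds only within the family $Q_8^{\,a}\times\mathbb Z_4^{\,c}\times\mathbb Z_2^{\,b}$, since \autoref{index_two_nonabelian} gives non-Abelian realizations with $d=1$ for $m\geq 3$.
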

 
 \begin{proof}
 	We divide the proof into two cases.
 	
 	\textbf{Case-I:} Let $2\leq d \leq m+1$. Set $a=1, c=d-2, b=m-d+1$. Then $a\geq 1$ and since $2\leq d \leq m+1$, we have, $b,c\geq 0$. Moreover, $a+b+c=1+(m-d+1)+(d-2)=m$ and $2a+c=2+d-2=d$. Thus, by \autoref{general_nonabelian_realization}, $D^{*}
 		\left(
 		Q_8\times
 		\mathbb Z_4^{\,d-2}\times
 		\mathbb Z_2^{\,m-d+1}
 		\right)
 		\cong
 		X_{2^m,\,2^d-1}.$
 	
 	\textbf{Case-II:} Let $m+1\leq d\leq 2m$. Now, we choose $a=d-m$ and $c=2m-d$, $b=0$. Then, $a\geq 1$ and $c\geq 0$, and $a+c=d-m+2m-d=m$, while $2a+c=2(d-m)+(2m-d)=d$. Again, \autoref{general_nonabelian_realization} tells that $D^{*}
 		\left(
 		Q_8^{\,d-m}\times
 		\mathbb Z_4^{\,2m-d}
 		\right)
 		\cong
 		X_{2^m,\,2^d-1}.$ Therefore, every pair satisfying $2\leq d \leq 2m$ has a non-Abelian realization. 
 \end{proof}
 
 Combining \autoref{abelian_realization} and \autoref{non-abelian_range}, we obtain that, for
 $2\leq d\leq m$, the graph $X_{2^m,\,2^d-1}$ admits both an
 Abelian and a non-Abelian realization. More precisely, $D^*\left( \mathbb Z_4^{\,d}\times \mathbb Z_2^{\,m-d} \right)
 \cong D^*\left( Q_8\times \mathbb Z_4^{\,d-2}\times \mathbb Z_2^{\,m-d+1} \right) \cong X_{2^m,\,2^d-1}.$ Thus, for $2\leq d\leq m$, the same graph $X_{2^m,\,2^d-1}$ is realized by at least two non-isomorphic finite $2$-groups, one Abelian and one non-Abelian.

 	\subsubsection{Index-Two Non-Abelian Realizations}
 	
 	\begin{theorem}\label{index_two_nonabelian}
 		Let $\theta\in\operatorname{Aut}(\mathbb Z_2^{\,m-1})$ for $m\geq 3$ be a nontrivial automorphism of
 		order $2$, and let $G_{m,\theta}={\mathbb Z_2^{\,m-1}}\rtimes_{\theta}\mathbb Z_4,$
 		where a generator $g$ of $\mathbb Z_4$ acts on $\mathbb Z_2^{\,m-1}$ as $\theta$.
 		Then $G_{m,\theta}$ is non-Abelian and $D^{*}(G_{m,\theta})\cong X_{2^m,\,1}.$
 	\end{theorem}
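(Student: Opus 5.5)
The plan is to verify the hypotheses of \autoref{exact_characterization} directly for $G=G_{m,\theta}$, with $h=2^m$ and $r=1$. Concretely, I would show three things: $\exp(G)=4$; the set $H(G)=\{x\in G:x^2=e\}$ is a subgroup of order $2^m$; and $[G:H(G)]=2$. Non-Abelianness would be checked separately, and the isomorphism $D^*(G_{m,\theta})\cong X_{2^m,1}$ would then follow immediately from \autoref{exact_characterization}.

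\textbf{Setup.} Write $N=\mathbb Z_2^{\,m-1}$ additively and $\mathbb Z_4=\langle g\rangle$. The action is $g^k\mapsto\theta^k$, which is a well-defined homomorphism $\mathbb Z_4\to\operatorname{Aut}(N)$ because $\theta^2=\mathrm{id}$ and $4$ is even. Elements of $G$ are pairs $(n,g^k)$, and by the semidirect product rule
\[
(n,g^k)^2=\bigl(n+\theta^k(n),\,g^{2k}\bigr).
\]
The hypothesis $m\geq3$ ensures that $\operatorname{Aut}(N)\cong GL(m-1,2)$ actually contains an involution, since $m-1\geq2$.

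\textbf{Computing squares.} I would split into two cases according to the parity of $k$.

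\emph{Case $k$ even ($k=0$ or $2$).} Here $\theta^k=\mathrm{id}$ and $2n=0$, so the square is $(0,e)$. Thus every element of $N\times\{e,g^2\}$ squares to the identity.

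\emph{Case $k$ odd.} The square is $(u,g^2)$ with $u=n+\theta(n)$. This is nontrivial because its second coordinate is $g^2\neq e$. Squaring again gives $(u+\theta^2(u),e)=(2u,e)=(0,e)$, so the element has order exactly $4$.

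Together these give $H(G)=N\times\{e,g^2\}$. This set is a subgroup, since it is closed under the product: $\theta^{\text{even}}$ is the identity and $\{e,g^2\}$ is a subgroup of $\mathbb Z_4$. It has order $2^{m-1}\cdot2=2^m$ and index $2^{m+1}/2^m=2$. Also $\exp(G)=4$, because $g=(0,g)$ has order $4$ and every element has order dividing $4$.

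\textbf{Non-Abelian.} Since $\theta\neq\mathrm{id}$, I would pick $n\in N$ with $\theta(n)\neq n$. Then $(0,g)(n,e)=(\theta(n),g)$ while $(n,e)(0,g)=(n,g)$, and these differ, so $G$ is non-Abelian.

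\textbf{Main obstacle.} The only delicate step is the square computation for odd $k$. I have to keep the semidirect product convention straight and confirm that $g^2$ acts trivially, which is exactly where $\theta^2=\mathrm{id}$ enters. Once that is settled, everything reduces to \autoref{exact_characterization}.
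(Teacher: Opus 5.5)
Your proposal is correct and follows essentially the same route as the paper: you compute squares coset by coset to get $H(G)=\mathbb Z_2^{\,m-1}\times\langle g^2\rangle$ of order $2^m$ and index $2$, check $\exp(G)=4$, and then invoke \autoref{exact_characterization}. Your explicit closure check for $H(G)$ and your explicit non-commuting pair $(0,g),(n,e)$ are slightly more careful than the paper's corresponding one-line assertions.
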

 
 		\begin{proof}
 			Since $\theta$ is nontrivial, the action of $\mathbb Z_4$ on $Z_2^{\,m-1}$ is
 			nontrivial, and hence $G_{m,\theta}$ is non-Abelian.
 			
 			Since $\theta^2=\operatorname{id}_{Z_2^{\,m-1}}$, where $\operatorname{id}_{Z_2^{\,m-1}}$ is the identity automorphism on $Z_2^{\,m-1}$, the element $g^2$ acts
 			trivially on $Z_2^{\,m-1}$. We claim that $H(G_{m,\theta})
 			=
 			\{x\in G_{m,\theta}:x^2=e\}
 			=
 			Z_2^{\,m-1}\times\langle g^2\rangle.$
 			
 			Indeed, every element of $\mathbb{Z}_2^{m-1}$ has order at most $2$. Moreover, for
 			$v\in Z_2^{\,m-1}$, $(vg^2)^2
 			=
 			v\theta^2(v)g^4
 			=
 			v^2
 			=
 			e.$
 			Thus $Z_2^{\,m-1}\times\langle g^2\rangle
 			\subseteq H(G_{m,\theta}).$
 			
 			On the other hand, $(vg)^2=v\theta(v)g^2\neq e,$
 			because its $\mathbb Z_4$-component is $g^2\neq e$. Similarly, $(vg^3)^2=v\theta(v)g^2\neq e.$
 			Hence no element in either of the cosets $\mathbb{Z}_2^{m-1}g$ or $Z_2^{\,m-1}g^3$ belongs to
 			$H(G_{m,\theta})$. Therefore, $H(G_{m,\theta})
 			=
 			Z_2^{\,m-1}\times\langle g^2\rangle.$
 			Consequently, $H(G_{m,\theta})
 			\cong
 			\mathbb Z_2^{\,m},$ so $|H(G_{m,\theta})|=2^m.$
 			Also, $|G_{m,\theta}|=4|Z_2^{\,m-1}|=2^{m+1},$
 			and hence $[G_{m,\theta}:H(G_{m,\theta})]=2.$
 			Since $g^2$ acts trivially on $\mathbb{Z}_2^{m-1}$, it commutes with every element of $\mathbb{Z}_2^{m-1}$. Moreover, $\mathbb{Z}_2^{m-1}$ is elementary Abelian, so $(v\theta(v))^2=e$ for every $v\in \mathbb{Z}_2^{m-1}$. Hence $(vg)^4
 			=\left(v\theta(v)g^2\right)^2=e,(vg^3)^4
 			=\left(v\theta(v)g^2\right)^2=e.$
 			Thus every element of $G_{m,\theta}$ has order dividing $4$. Since $g$ has order $4$, we conclude that
 			$\exp(G_{m,\theta})=4.$ Therefore, by \autoref{exact_characterization}, $D^{*}(G_{m,\theta})
 			\cong X_{2^m,\,2^1-1}=X_{2^m,\,1}.$
 	\end{proof}

 	\begin{remark}\label{index_two_collision}
 		By \autoref{abelian_realization} and \autoref{index_two_nonabelian} we can say that for every integer $m\geq3$, the graph $X_{2^m,\,1}$
 		admits both an Abelian and a non-Abelian realization. In particular, $D^{*}\left(
 		\mathbb Z_4\times\mathbb Z_2^{\,m-1}
 		\right)
 		\cong
 		D^{*}(G_{m,\theta})
 		\cong
 		X_{2^m,\,1}.$
 	\end{remark}

 	\begin{remark}\label{full_collision_range}
 		For every integer $d$ satisfying $1\leq d\leq m,$ with $m\geq 3$, the graph $X_{2^m,\,2^d-1}$ admits both an Abelian and a non-Abelian realization.
 	\end{remark}
 	
 	\begin{theorem}\label{realization_summary}
 		Let $m$ and $d$ be positive integers.
 		
 		\begin{enumerate}
 			\item If $1\leq d\leq 2m,$
 			then the graph $X_{2^m,\,2^d-1}$ is realizable as $D^{*}(G)$ for some finite $2$-group $G$.
 			
 			\item The graph $X_{2^m,\,2^d-1}$ admits an Abelian realization if and only if $1\leq d\leq m.$ In this case the Abelian realization is unique up to isomorphism and is $\mathbb Z_4^{\,d}\times
 			\mathbb Z_2^{\,m-d}.$
 			\item If $2\leq d\leq2m,$ then $X_{2^m,\,2^d-1}$ admits a non-Abelian realization.
 			\item If $m<d\leq2m,$ then $X_{2^m,\,2^d-1}$ admits a non-Abelian realization but admits no Abelian realization.
 		\end{enumerate}
 	\end{theorem}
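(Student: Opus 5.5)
Theorem \ref{realization_summary} collects results already established, so the plan is to assemble it part by part from \autoref{abelian_realization}, \autoref{index_two_nonabelian}, \autoref{general_nonabelian_realization} and \autoref{non-abelian_range}. The only genuinely new point is the case $d=1$ in part (1) when $m\leq 2$, where neither the semidirect construction (which needs $m\geq 3$) nor the $Q_8$-based construction (which needs $d\geq 2$) applies.

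\textbf{Part (2).} This is exactly \autoref{abelian_realization}: an Abelian realization exists if and only if $1\leq d\leq m$. In that case every finite Abelian $2$-group of exponent $4$ with $|H(G)|=2^m$ and $[G:H(G)]=2^d$ is isomorphic to $\mathbb Z_4^{\,d}\times\mathbb Z_2^{\,m-d}$.

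\textbf{Part (3).} This is \autoref{non-abelian_range}. For $2\leq d\leq m+1$ one uses $Q_8\times\mathbb Z_4^{\,d-2}\times\mathbb Z_2^{\,m-d+1}$. For $m+1\leq d\leq 2m$ one uses $Q_8^{\,d-m}\times\mathbb Z_4^{\,2m-d}$. Both constructions are justified through \autoref{general_nonabelian_realization} and \autoref{exact_characterization}.

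\textbf{Part (1).} Split into two cases.
\begin{itemize}
\item If $1\leq d\leq m$, part (2) supplies the Abelian realization $\mathbb Z_4^{\,d}\times\mathbb Z_2^{\,m-d}$. This covers $d=1$ for every $m\geq 1$, since $1\leq m$ always holds.
\item If $m<d\leq 2m$, then $d\geq m+1\geq 2$, so part (3) applies.
\end{itemize}
Together these cover the full range $1\leq d\leq 2m$. The main thing to check is the gap just mentioned: $d=1$ with $m\leq 2$. Since $d=1\leq m$, it falls under the Abelian case, realized by $\mathbb Z_4\times\mathbb Z_2^{\,m-1}$. Here one should confirm that \autoref{exact_characterization} gives $\exp=4$, $|H|=2^m$ and index $2$.

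\textbf{Part (4).} If $m<d\leq 2m$, then $d\geq 2$, so part (3) gives a non-Abelian realization. Since $d>m$, part (2) rules out any Abelian realization.

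The whole proof reduces to bookkeeping of parameter ranges. The only step needing care is making sure the boundary values $d=1$, $d=m$, $d=m+1$ and $d=2m$ are each covered by a construction whose hypotheses hold, in particular $b,c\geq 0$ and $a\geq 1$ in \autoref{general_nonabelian_realization}.
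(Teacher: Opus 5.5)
Your proposal is correct and matches the paper's proof: part (1) combines \autoref{abelian_realization} (for $1\leq d\leq m$) with \autoref{non-abelian_range} (for $m<d\leq 2m$), parts (2) and (3) are direct citations of those results, and part (4) pairs the non-Abelian construction with the failure of $d\leq m$. The case $d=1$ with $m\leq 2$ is not really a gap, since the Abelian case already covers it, as you note, and \autoref{index_two_nonabelian} is never needed.
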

 	
 		\begin{proof}
 			(1) follows by combining
 			\autoref{abelian_realization} with
 			\autoref{non-abelian_range}.
 			
 			(2) is precisely \autoref{abelian_realization}.
 			
 			(3) follows from \autoref{non-abelian_range}.

 			Finally, if $m<d\leq2m$, then
 			\autoref{non-abelian_range} provides a non-abelian
 			realization, whereas \autoref{abelian_realization} shows that no abelian realization is possible. This proves (4).
 		\end{proof}

 		\begin{remark}[An inverse problem for algebraically defined graphs]
 			\label{rem:inverse-problem}
 			A natural inverse problem in algebraic and spectral graph theory is the following: \emph{Can two non-isomorphic algebraic objects give rise to isomorphic graphs under the same graph construction?}
 			
 			For the graph construction considered in this article, the answer is affirmative. Indeed, for $2\leq d\leq m$, consider $G_1=\mathbb Z_4^d\times\mathbb Z_2^{\,m-d}$ and $G_2= Q_8\times \mathbb Z_4^{\,d-2}\times \mathbb Z_2^{\,m-d+1}.$ The group $G_1$ is Abelian whereas $G_2$ is non-Abelian, and hence $G_1\not\cong G_2.$ Nevertheless, $D^*(G_1) \cong D^*(G_2)\cong X_{2^m,\,2^d-1}.$
 			
 			Similarly, for every $m\geq3$, the Abelian group
 				$\mathbb Z_4\times\mathbb Z_2^{\,m-1}$ and the non-Abelian group $\mathbb Z_2^{\,m-1}\rtimes_{\theta}\mathbb Z_4$ give rise to the same graph $X_{2^m,1}$. Thus, the correspondence $G\longmapsto D^*(G)$ is not injective on isomorphism classes of finite groups. In particular, the associated graph, and consequently its adjacency spectrum, does not in general determine the isomorphism type of the underlying group. Hence the family provides infinitely many explicit examples in which distinct algebraic structures are indistinguishable at the level of the associated graph and its adjacency spectrum. This raises the broader inverse problem of determining which group-theoretic information is preserved by $D^*(G)$, and of classifying all non-isomorphic groups that give rise to a fixed graph $X_{h,r}$.
 		\end{remark}

 \section{Scalable Family of Integral Graphs and Spectral Determination}
 
 In this section, we investigate how new members of the family arise from group-theoretic constructions. We subsequently translate these results to the Proper POE Complement Graphs of their realizing groups.
 
	\subsection{Construction of Scalable Family of Integral Graphs}
	
	We now describe a graph-theoretic procedure for obtaining the larger integral graphs from smaller integral graphs.
	
	\begin{procedure}[Recursive construction in the $h$-direction]\label{construction_procedure}
		We have, $V(X_{h,r})= V_0^{(h)}\dot\cup V_1^{(h)}\dot\cup\cdots\dot\cup V_r^{(h)},$ where $X_{h,r}[V_0]\cong\overline K_{h-1}$
		and $X_{h,r}[V_i]\cong {\left(\frac{h}{2}\right)}K_2, \text{ for } i=1,2,\cdots, r.$ Follow the below steps to construct a new graph $Y$.
		
		\begin{enumerate}
			\item \textbf{Vertex Addition:} 
			\begin{enumerate}
				\item Define $V_0'=V_0^{(h)}\cup B_0$, where $B_0$ is a set of $h$ vertices.
				
				\item For each $i=1,\ldots,r$, define $V_i'=V_i^{(h)}\cup B_i$, where each $B_i$ is a set of $h$ new vertices.  
			\end{enumerate}

			\item \textbf{Edge Addition:}
			\begin{enumerate}
				\item For each $i =1,\ldots,r $, pair the $h$ vertices of $B_i$ by $\frac{h}{2}$ disjoint edges, such that
				$Y[B_i]\cong \left(\frac h2\right)K_2.$ 
				\item For every $i,j$ with $0\leq i<j\leq r$, join every vertex of $V_i'$ to every vertex of $V_j'$.
			\end{enumerate}
		\end{enumerate}	
	\end{procedure}
	
	\begin{theorem}
		The graph generated by \autoref{construction_procedure} is isomorphic to $X_{2h,r}$.
	\end{theorem}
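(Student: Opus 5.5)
The plan is to build an explicit isomorphism by checking that the partition $V(Y)=V_0'\dot\cup V_1'\dot\cup\cdots\dot\cup V_r'$ produced by \autoref{construction_procedure} has exactly the same block structure that defines $X_{2h,r}$ in \autoref{infinite_graph_family}. That means three things must hold. First, $Y[V_0']\cong \overline{K}_{2h-1}$. Second, $Y[V_i']\cong hK_2=\left(\frac{2h}{2}\right)K_2$ for $1\leq i\leq r$. Third, every two distinct parts are completely joined. Once these hold, $Y\cong \overline{K}_{2h-1}\vee\left(\bigvee_{i=1}^{r} hK_2\right)=X_{2h,r}$ directly from the definition. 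We may use that $2h$ is even and $2h\geq 2$, so $X_{2h,r}$ is a member of the family.

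\textbf{Sizes of the parts.} $|V_0'|=(h-1)+h=2h-1$, and $|V_i'|=h+h=2h$ for each $i\geq 1$. This already matches the part sizes of $X_{2h,r}$. As a sanity check, the total is $(r+1)(2h)-1$, which agrees with \autoref{X_h_r_basic} applied to $X_{2h,r}$.

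\textbf{Induced subgraphs on the parts.} This is the step needing care, since the procedure must be read as adding no edges other than those listed in Step 2, and the old edges of $X_{h,r}$ are kept.
\begin{enumerate}
\item Inside $V_0'$: $V_0^{(h)}$ was independent in $X_{h,r}$. No edge is placed inside $B_0$, and Step 2(b) only joins distinct parts. So no edge joins $V_0^{(h)}$ to $B_0$, and $V_0'$ is independent.
\item Inside $V_i'$ for $i\geq1$: the old vertices $V_i^{(h)}$ carry $\frac h2$ disjoint edges from $X_{h,r}$, and Step 2(a) places $\frac h2$ disjoint edges on $B_i$. No edge runs between $V_i^{(h)}$ and $B_i$, because Step 2(b) is only for $i<j$ across different parts. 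Hence $Y[V_i']$ is $1$-regular on $2h$ vertices, a perfect matching, and so $Y[V_i']\cong hK_2$.
\end{enumerate}

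\textbf{Joins between parts.} Step 2(b) makes $V_i'$ completely joined to $V_j'$ for all $0\leq i<j\leq r$. Old cross edges of $X_{h,r}$ are already contained in these complete joins, so they create nothing extra.

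Combining the three facts gives $Y\cong X_{2h,r}$. As an independent check, $Y$ is $2rh$-regular: a vertex of $V_0'$ has degree $2rh$, and a vertex of $V_i'$ has degree $1+(2h-1)+(r-1)2h=2rh$. Together with the vertex count, the spectrum could then be compared via \autoref{Xhr_spectrum} and \autoref{thm:DS-Xhr}, but the direct structural argument suffices. The main obstacle is only the interpretive point in the induced-subgraph step: the new vertices $B_i$ must not be joined to the old vertices in the same part, for otherwise $Y[V_i']$ would fail to be a perfect matching.
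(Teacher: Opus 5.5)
Your proposal is correct and follows essentially the same route as the paper: both compare the partition $V_0'\dot\cup V_1'\dot\cup\cdots\dot\cup V_r'$ with the parts of $X_{2h,r}$, checking the part sizes, that $V_0'$ is independent, that each $V_i'$ induces $hK_2$, and that distinct parts are completely joined. Your remark that no edges run between $V_i^{(h)}$ and $B_i$ (nor between $V_0^{(h)}$ and $B_0$) states explicitly the reading of the procedure that the paper's proof uses implicitly.
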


		\begin{proof}
			
		Initially, the graph $X_{2,r}$ has $\text{ vertex set }  V(X_{2,r}) =V_0\dot\cup V_1\dot\cup\cdots\dot\cup V_r$, where, $V_0=\{u\}$ and $V_i=\{x_i,y_i\}, 1\le i\le r$ and $\text{ edge set } E(X_{2,r})=\left(\bigcup_{i=1}^{r}\bigl\{(x_i,y_i)\bigr\}
			\right)\cup\left(\bigcup_{i=1}^{r}\bigl\{(u,x_i),(u,y_i)\bigr\}\right)\cup\left(\bigcup_{1\le i<j\le r}\bigl\{(a,b):a\in V_i,\ b\in V_j\bigr\}\right).$
		 Recall that, every subset $V_1^{(h)}, \cdots, V_r^{(h)}$ of $V(X_{h,r})$ contains $h$ and $V_0^{(h)}$ contains $h-1$ vertices. Also, $V(X_{2h,r})=V_0^{(2h)}\cup V_1^{(2h)}\cup \cdots \cup V_r^{(2h)}$ where every subset $V_i^{(2h)}, i=1,2,\ldots, r$ contains $2h$ and $V_0^{(2h)}$ contains $2h-1$ vertices. As $V_i'=V_i^{(h)}\cup B_i$ for $i=0,1,2,\cdots, r$ and each $B_i$ contains $h$ vertices, the number of vertices in $V_i'$ and in $V_i^{(2h)}$ are equal.

		The induced subgraph $X_{h,r}[V_0^{(h)}]$ has no edge. \autoref{construction_procedure} does not add any edge between two vertices of $V_0'$. Therefore, the induced subgraph $Y[V_0']$ and $X_{2h,r}[V_0^{(2h)}]$ are isomorphic. The induced subgraph $X_{h,r}[V_i^{(h)}]$ is isomorphic to $\frac{h}{2}K_2$. \autoref{construction_procedure} adds edges between distinct pairs of vertices in $B_i$ for $i=1,2,\cdots, r$. Therefore, $Y[V_i']$ is isomorphic to $hK_2$. Also, $X_{2h,r}[V_i^{(2h)}]$ is isomorphic to $hK_2$. Hence, $Y[V_i']$ is isomorphic to $X_{2h,r}[V_i^{(2h)}]$.
		
		In $X_{h,r}$ every vertex in $V_i^{(h)}$ is adjacent to every vertex in $V_j^{(h)}$ for $i\neq j$ and $i,j=0,1,2,\cdots, r$. Similarly, in $X_{2h,r}$, every vertex in $V_i^{(2h)}$ is adjacent to every vertex $V_j^{(2h)}$ for $i\neq j$ and $i,j=0,1,2,\cdots, r$. \autoref{construction_procedure} adds all possible edges between $V_i'$ and $V_j'$ for every pair of distinct indices $i,j\in\{0,1,\ldots,r\}$. Therefore, for $i\neq j$,
		the subgraph $Y\langle V_i',V_j'\rangle$ is isomorphic to
		$X_{2h,r}\langle V_i^{(2h)},V_j^{(2h)}\rangle.$

		Any edge $(u,v)$ in $Y$ is either in $Y[V_i']$ for $i=1,2,\cdots, r$ or in $Y\langle V_i',V_j'\rangle$ for $i\neq j; i,j=0,1,2,\cdots, r$. If $(u,v)\in Y[V_i']$ then $(u,v)\in X_{2h,r}[V_i^{(2h)}]$. If $(u,v)\in Y\langle V_i',V_j' \rangle$ then $(u,v)\in X_{2h,r}\langle V_i^{(2h)},V_j^{(2h)} \rangle$. It holds conversely, that is if $(u,v)\in X_{2h,r}$, then $(u,v)\in Y$. Hence, $Y$ and $X_{2h,r}$ are isomorphic.

		\end{proof}

	\begin{remark}
		\autoref{construction_procedure} constructs $X_{2h,r}$ from $X_{h,r}$ by adding $(r+1)h$ vertices and $\frac{rh}{2}\{3(r+1)h-1\}$. It assists us to construct an infinite family of integral graphs, which is $\{X_{2,r},X_{4,r},\ldots\}$ for $r\geq 1$. A few members of these families are drawn in Appendix.
	\end{remark}
	
	\begin{procedure}[Recursive construction in the $r$-direction]
		\label{proc:r-recursion}
		Let $X_{h,r}=\overline{K}_{h-1}\vee\left(\bigvee_{i=1}^{r}\frac{h}{2}K_2\right).$ Starting from $X_{h,r}$, construct a graph $Y$ as follows.
		
		\begin{enumerate}
			\item \textbf{Vertex Addition:}
			\begin{enumerate}
				\item Add a new set $V_{r+1} = \{x_1,y_1,\ldots,x_{h/2},y_{h/2}\}$ consisting of $h$ new vertices.
			\end{enumerate}

				\item \textbf{Edge Addition:}
				\begin{enumerate}
					\item Add the $\frac{h}{2}$ disjoint edges
					$(x_j,y_j), 1\leq j\leq \frac{h}{2},$ so that
					$Y[V_{r+1}]\cong \frac{h}{2}K_2.$
					
					\item Join every vertex of $V_{r+1}$ to every vertex of
					$X_{h,r}$.
				\end{enumerate}
			\end{enumerate}
	\end{procedure}
	
	\begin{theorem}\label{thm:r-recursion}
		The graph obtained from $X_{h,r}$ by
		\autoref{proc:r-recursion} is isomorphic to $X_{h,r+1}$.
	\end{theorem}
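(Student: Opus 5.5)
The plan is to exhibit an explicit vertex partition of the constructed graph $Y$ that matches the defining join decomposition of $X_{h,r+1}$ in \autoref{infinite_graph_family}, and then verify that the edge sets agree part by part, in the same style as the proof of the $h$-direction construction.

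\textbf{Step 1: vertex partition.} Write $V(X_{h,r})=V_0\dot\cup V_1\dot\cup\cdots\dot\cup V_r$ as in the proof of \autoref{X_h_r_basic}. Here $X_{h,r}[V_0]\cong\overline K_{h-1}$ and $X_{h,r}[V_i]\cong\left(\frac h2\right)K_2$ for $1\le i\le r$, and distinct parts are completely joined. Then $V(Y)=V_0\dot\cup V_1\dot\cup\cdots\dot\cup V_r\dot\cup V_{r+1}$.
\begin{itemize}
\item $V_0$ still has $h-1$ vertices.
\item Each $V_i$, for $1\le i\le r+1$, has $h$ vertices.
\end{itemize}
So $|V(Y)|=(r+2)h-1=|V(X_{h,r+1})|$.

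\textbf{Step 2: induced subgraphs on the parts.} Step 2(b) of \autoref{proc:r-recursion} only adds edges between $V_{r+1}$ and $V(X_{h,r})$. It adds no edges inside any old part. Hence $Y[V_i]=X_{h,r}[V_i]$ for $0\le i\le r$, so $Y[V_0]\cong\overline K_{h-1}$ and $Y[V_i]\cong\left(\frac h2\right)K_2$ for $1\le i\le r$. Step 2(a) gives $Y[V_{r+1}]\cong\left(\frac h2\right)K_2$ directly.

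\textbf{Step 3: edges between parts.} For $0\le i<j\le r$, the edges between $V_i$ and $V_j$ are inherited from $X_{h,r}$, so these parts are completely joined. Step 2(b) completely joins $V_{r+1}$ to every $V_i$ with $i\le r$.

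\textbf{Step 4: conclusion.} Every edge of $Y$ lies either inside some part or between two distinct parts, and the two previous steps describe both kinds exactly. Therefore
\[
Y=\overline K_{h-1}\vee\left(\bigvee_{i=1}^{r+1}\left(\tfrac h2\right)K_2\right),
\]
where the join is associative and commutative up to isomorphism. The right-hand side is $X_{h,r+1}$ by \autoref{infinite_graph_family}.

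As a sanity check, the degree count should match \autoref{X_h_r_basic}. Old vertices gain $h$ neighbours, going from degree $rh$ to $(r+1)h$. A new vertex has $1+(r+1)h-1=(r+1)h$ neighbours. Both agree with the $(r+1)h$-regularity of $X_{h,r+1}$.

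There is no real obstacle here. The only point that needs care is Step 4: the argument must make explicit that no edges beyond those specified exist in $Y$, so that the edge sets of $Y$ and $X_{h,r+1}$ coincide exactly rather than one merely containing the other. I would state this directly from the description of \autoref{proc:r-recursion}.
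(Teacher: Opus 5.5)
Your proposal is correct and follows the same route as the paper. Both arguments observe that $V_{r+1}$ induces $\left(\frac{h}{2}\right)K_2$, is completely joined to every vertex of $X_{h,r}$, and adds no edges inside the old parts, so $Y\cong X_{h,r}\vee\left(\frac{h}{2}\right)K_2\cong X_{h,r+1}$; your version only makes explicit the part-by-part edge verification and degree check that the paper leaves implicit.
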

	
	\begin{proof}
		The original graph has the decomposition
		$X_{h,r} = \overline{K}_{h-1} \vee \left(\bigvee_{i=1}^{r}\frac{h}{2}K_2 \right).$
		The newly added set $V_{r+1}$ induces another copy of
		$\frac{h}{2}K_2$, and by construction every vertex of $V_{r+1}$ is adjacent to every vertex of the original graph. Therefore, $Y \cong \overline{K}_{h-1} \vee \left(\bigvee_{i=1}^{r+1}\frac{h}{2}K_2\right)=X_{h,r+1}.$
	\end{proof}
	
	\begin{remark}[Two-directional recursive structure]
		The family $\{X_{h,r}\}$ admits recursive enlargement with respect to both parameters. The vertical direction doubles the parameter $h$, whereas the horizontal direction increases the parameter $r$ by one:
		\[
		\begin{array}{ccc}
			X_{h,r}
			&\xrightarrow{\ r\mapsto r+1\ }&
			X_{h,r+1}
			\\[2mm]
			\Big\downarrow{\scriptstyle h\mapsto 2h}
			&&
			\Big\downarrow{\scriptstyle h\mapsto 2h}
			\\[2mm]
			X_{2h,r}
			&\xrightarrow{\ r\mapsto r+1\ }&
			X_{2h,r+1}.
		\end{array}
		\]
		
		Here vertical arrow represents the transformation
		$X_{h,r}\longmapsto X_{2h,r},$ while horizontal arrow represents $X_{h,r}\longmapsto X_{h,r+1}.$ Consequently, starting from a small member of the family, one can
		generate larger graphs by increasing either parameter independently. 
		
		It should be noted that the horizontal recursion is purely
		graph-theoretic. Although $X_{h,r}$ is defined for every $r\geq1$, a realization by a finite $2$-group requires $r=2^d-1$. One specific example of the two directional recursive structure is shown in \autoref{recursive_example}.
	\end{remark}

	\begin{remark}\label{vertical_tower}
		The procedure \autoref{construction_procedure} keeps the index $2^d$ fixed with $[G:H(G)]=2^d$ while increasing the involution parameter $m$, where $|H(G)|=2^m$. Thus, in the $(m,d)$-parameter space of realizable graphs, direct multiplication by elementary Abelian $2$-groups produces a vertical realization tower $(m,d)\longmapsto(m+t,d).$
	\end{remark}
	
	\begin{theorem}\label{elementary_extension}
		Let $D^{*}(G)\cong X_{h,r}.$ For every integer $t\geq0$, define $G_t=G\times\mathbb Z_2^{\,t}.$ Then $D^{*}(G_t)\cong X_{2^t h,\,r}.$
	\end{theorem}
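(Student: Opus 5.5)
The plan is to go through the exact characterization, \autoref{exact_characterization}. It applies in both directions, so we can read off the structure of $G$ from the hypothesis and then check the same conditions for $G_t$.

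\emph{Step 1: extract the structure of $G$.} Since $D^{*}(G)\cong X_{h,r}$, \autoref{exact_characterization} (via \autoref{converse_realization}) gives three facts:
\begin{itemize}
\item $\exp(G)=4$;
\item $H(G)=\{x\in G:x^2=e\}$ is a subgroup of $G$;
\item $|H(G)|=h$ and $[G:H(G)]=r+1$.
\end{itemize}

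\emph{Step 2: check the exponent of $G_t$.} Write elements of $G_t$ as pairs $(g,v)$ with $g\in G$ and $v\in\mathbb Z_2^{\,t}$. Then $(g,v)^k=(g^k,v^k)$, and $v^2=e$. Hence $(g,v)^4=(g^4,e)=e$, so $\exp(G_t)$ divides $4$. Conversely, an element $g\in G$ of order $4$ gives $(g,e)$ of order $4$. Therefore $\exp(G_t)=4$.

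\emph{Step 3: identify $H(G_t)$.} We have $(g,v)^2=(g^2,e)$, so $(g,v)^2=e$ exactly when $g\in H(G)$. This gives
\[
H(G_t)=H(G)\times\mathbb Z_2^{\,t}.
\]
This is a direct product of two subgroups, hence a subgroup of $G_t$. Its order is $2^t h$. Its index is
\[
[G_t:H(G_t)]=\frac{|G|\,2^t}{h\,2^t}=[G:H(G)]=r+1.
\]

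\emph{Step 4: conclude.} Apply the forward direction of \autoref{exact_characterization} (equivalently \autoref{forward_realization}) to $G_t$, with parameters $2^t h$ and $r$. This yields $D^{*}(G_t)\cong X_{2^t h,\,r}$. For $t=0$ the statement is just the hypothesis.

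There is no real obstacle. The only point needing care is Step 3: we must show that $H(G_t)$ is genuinely the product subgroup, not merely a subset of it, and that the index is unchanged. Both follow from the coordinatewise computation of squares.
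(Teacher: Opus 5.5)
Your proposal is correct and follows essentially the same route as the paper: you read off $\exp(G)=4$, $H(G)\leq G$, $|H(G)|=h$ and $[G:H(G)]=r+1$ from the exact characterization, compute $H(G_t)=H(G)\times\mathbb Z_2^{\,t}$ coordinatewise (order $2^t h$, index $r+1$), and apply the characterization again to $G_t$. Your explicit checks that $\exp(G_t)$ is exactly $4$ (using $(g,e)$ with $o(g)=4$) and that $H(G_t)$ is a subgroup fill in details the paper's proof leaves implicit.
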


	\begin{proof}
		By \autoref{exact_characterization}, $\exp(G)=4$ and $H(G)=\{x\in G:x^2=e\}$ is a subgroup satisfying $|H(G)|=h \text{ and } [G:H(G)]=r+1.$ Since every element of $\mathbb Z_2^{\,t}$ has square equal to the
		identity, $H(G_t) = H(G)\times\mathbb Z_2^{\,t}.$ Therefore, $|H(G_t)| = 2^t|H(G)| = 2^t h.$
		Moreover, $[G_t:H(G_t)] = \frac{|G|\,2^t}{|H(G)|\,2^t} = [G:H(G)] = r+1.$ Since $\exp(G)=4$ and $\mathbb Z_2^{\,t}$ has exponent at most $2$, we also have $\exp(G_t)=4.$ Hence \autoref{exact_characterization} gives $D^{*}(G_t) \cong X_{2^t h,\,r}.$
	\end{proof}

	The results of this section show that the parameters $(h,r)$ give several fundamental invariants of the family $X_{h,r}$. 
	 Moreover, elementary abelian direct extensions of realizing groups generate infinite towers of realizable graphs while preserving the second parameter $r$.

 \section{Conclusion}
 In this article, we studied the Proper POE Complement Graph $D^*(G)$ and used it to obtain an infinite family of connected regular adjacency integral graphs $X_{h,r}=\overline{K}_{h-1}\vee\left(\bigvee_{i=1}^{r}\frac{h}{2}K_2\right).$ We determined the adjacency spectrum of $X_{h,r}$ and proved that each member of the family is determined by its adjacency spectrum. For finite $2$-groups, we characterized the regularity of $D^*(G)$ and obtained an exact criterion for when
 $D^*(G)\cong X_{h,r}.$ We further described Abelian and non-Abelian realizations of the graphs $X_{2^m,\,2^d-1}$ and showed that non-isomorphic groups may produce the same graph. Thus, although the adjacency spectrum determines the graph, it does not, in general, determine the underlying group.
 Finally, we presented recursive constructions for the family in both parameters and showed that elementary Abelian direct extensions of realizing groups produce infinite sequence of realizable integral graphs.
 
 \section{Acknowledgment}
 
 During the preparation of this manuscript, the authors used QuillBot in order to improve language and readability. The authors reviewed and edited the content as needed and take full responsibility for the mathematical content and conclusions.

 \phantomsection
 \addcontentsline{toc}{section}{Appendix}
 
 \section*{Appendix}
 
 In this work, we consider the groups of order $2^n$ for different values of $n$. In \autoref{infinite_graph_family}, we define the graphs $X_{h,r}$. The relation between the parameter of the group $n$ and the parameters of the graph $h$ and $r$ as follows: $|G|=2^n=h(r+1),
 \text{ equivalently } r=\frac{2^n}{h}-1$ defined in the \autoref{parameter_restriction} and \autoref{exact_characterization}. In this section, we find the number of graphs in the family of integral graphs $X_{h,r}.$ Also, we calculate the number of groups for different graphs $X_{h,r}.$
 
 Using exhaustive computations in SageMath over all isomorphism classes of groups of orders $2^n$, $3\leq n\leq 8$, we record the number of realizations of the graphs $X_{2,3},\ X_{4,3},\ X_{8,3},\ X_{16,3},\ X_{32,3},
 \ \text{ and }\ X_{64,3}.$ Along with the realization counts, we present some basic graph-theoretic and spectral parameters of these graphs, including their numbers of vertices and edges, independence numbers, and adjacency spectra. Finally, we indicate how the realization sets extend under direct products with elementary abelian $2$-groups, thereby producing infinite towers of realizable connected regular integral graphs.
 
 \begin{definition}\label{realization_fibre}
 	For $m,d\in \mathbb{N}$, define the set of Realization groups of $X_{2^m,\,2^d-1}$ by \[\mathcal{R}_{m,d}
 	=
 	\left\{
 	[G]_{\cong} :
 	\begin{array}{l}
 		G \text{ is a finite $2$-group},\\[2mm]
 		\exp(G)=4,\\[1mm]
 		H(G)=\{x\in G:x^2=e\}\leq G,\\[1mm]
 		|H(G)|=2^m,\\[1mm]
 		[G:H(G)]=2^d
 	\end{array}
 	\right\}.\] Here, $[G]_{\cong}$ denotes the isomorphism class of $G$.
 \end{definition}
 
 \begin{proposition}\label{fibre_characterization}
 	For every pair of positive integers $(m,d)$, $\mathcal{R}_{m,d}
 	=
 	\left\{
 	[G]_{\cong} :
 	D^{*}(G)\cong X_{2^m,\,2^d-1}
 	\right\}.$ Thus, $|\mathcal{R}_{m,d}|$ is precisely the number of
 	non-isomorphic finite $2$-groups realizing the graph
 	$X_{2^m,\,2^d-1}$.
 \end{proposition}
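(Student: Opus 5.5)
The plan is to prove the set equality by double inclusion, using \autoref{exact_characterization} with $h=2^m$ and $r=2^d-1$. One subtlety comes first. \autoref{exact_characterization} is stated for a finite $2$-group $G$, because \autoref{adjacent_lemma} and \autoref{degree_theorem} use the $2$-group hypothesis. The right-hand side of the proposition, however, does not say explicitly that $G$ is a $2$-group. I will read both sets as ranging over isomorphism classes of finite $2$-groups, matching the first condition in \autoref{realization_fibre} and the phrase ``non-isomorphic finite $2$-groups'' in the statement.

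\emph{Inclusion $\subseteq$.} Let $[G]_{\cong}\in\mathcal{R}_{m,d}$. Then $G$ is a finite $2$-group with $\exp(G)=4$, and $H(G)$ is a subgroup with $|H(G)|=2^m$ and $[G:H(G)]=2^d$. Put $h=2^m$, which is even and at least $2$, and $r=2^d-1\geq 1$. These values satisfy the standing hypotheses on $X_{h,r}$. The ``if'' direction of \autoref{exact_characterization} (that is, \autoref{forward_realization}) then gives $D^{*}(G)\cong X_{2^m,\,2^d-1}$.

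\emph{Inclusion $\supseteq$.} Let $G$ be a finite $2$-group with $D^{*}(G)\cong X_{2^m,\,2^d-1}$. The ``only if'' direction of \autoref{exact_characterization} (that is, \autoref{converse_realization}) gives $\exp(G)=4$, $H(G)\leq G$, $|H(G)|=h=2^m$ and $[G:H(G)]=r+1=2^d$. Hence $[G]_{\cong}\in\mathcal{R}_{m,d}$.

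Both defining conditions are invariant under group isomorphism. A group isomorphism $G\to G'$ maps $G^*$ bijectively onto $G'^*$ and preserves orders of products, so it induces a graph isomorphism $D^*(G)\cong D^*(G')$. The exponent, $H(G)$ and the index are likewise preserved. So membership depends only on the class $[G]_{\cong}$, and both sets are well-defined sets of isomorphism classes. The counting statement then follows at once, since $|\mathcal{R}_{m,d}|$ counts exactly the isomorphism classes of finite $2$-groups realizing $X_{2^m,\,2^d-1}$.

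The only real obstacle is the scope issue about $2$-groups noted at the start. The substantive mathematics has already been done in \autoref{converse_realization}.
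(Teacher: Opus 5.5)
Your proposal is correct and takes the paper's route: the paper's proof is the single remark that the proposition follows from \autoref{exact_characterization}, and your double inclusion plus the isomorphism-invariance check spells that remark out. You do not need to reinterpret the right-hand side as ranging only over $2$-groups, because any finite group $G$ with $D^{*}(G)\cong X_{2^m,\,2^d-1}$ has $|G|=|V(X_{2^m,\,2^d-1})|+1=2^{m+d}$ and so is automatically a $2$-group.
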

 
 \begin{proof}
 	It follows immediately from \autoref{exact_characterization}.
 \end{proof}

 For $3\leq n\leq8$, exhaustive computation using SageMath \cite{sagemath} over all isomorphism
 classes of groups of order $2^n$ are given in the tables below: 
 \[
 \begin{array}{c|cccccc}
 	n & 3 & 4 & 5 & 6 & 7 & 8\\
 	\hline
 	|G| & 8 & 16 & 32 & 64 & 128 & 256\\
 	\text{Realized graph}
 	& X_{2,3} & X_{4,3} & X_{8,3} & X_{16,3} & X_{32,3} & X_{64,3}
 	\\
 	|V(X_{h,3})|
 	& 7 & 15 & 31 & 63 & 127 & 255
 	\\
 	|E(X_{h,3})|
 	& 21 & 90 & 372 & 1512 & 6096 & 24480
 	\\
 	\alpha(X_{h,3})
 	& 1 & 3 & 7 & 15 & 31 & 63
 	\\
 	$[G:H]$
 	& 4 & 4 & 4 & 4 & 4 & 4
 	\\
 	|\mathcal{R}_{n-2,2}|
 	& 1 & 3 & 7 & 18 & 41 & 90
 	\\
 \end{array}
 \] 
 
 \begin{table}[ht]
 	\centering
 	\small
 	\begin{tabular}{c|c}
 		\hline
 		\text{Graph} & \text{Adjacency spectrum}\\
 		\hline
 		
 		$X_{2,3}$
 		&
 		$\{[6]^1,[-1]^6\}$\\[1mm]
 		
 		$X_{4,3}$
 		&
 		$\{[12]^1,[-3]^3,[1]^3,[0]^2,[-1]^6\}$\\[1mm]
 		
 		$X_{8,3}$
 		&
 		$\{[24]^1,[-7]^3,[1]^9,[0]^6,[-1]^{12}\}$\\[1mm]
 		
 		$X_{16,3}$
 		&
 		$\{[48]^1,[-15]^3,[1]^{21},[0]^{14},[-1]^{24}\}$\\[1mm]
 		
 		$X_{32,3}$
 		&
 		$\{[96]^1,[-31]^3,[1]^{45},[0]^{30},[-1]^{48}\}$\\[1mm]
 		
 		$X_{64,3}$
 		&
 		$\{[192]^1,[-63]^3,[1]^{93},[0]^{62},[-1]^{96}\}$\\
 		
 		\hline
 	\end{tabular}
 \end{table}
 Here, $|G|$ denotes the order of the group $G$,
 $|\mathcal{R}_{n-2,2}|$ denotes the number of isomorphism classes in the corresponding realization set, and $\alpha(X_{h,3})$ denotes the independence number of $X_{h,3}$, that is, the maximum cardinality of an independent set in $X_{h,3}$.

 \begin{remark}\label{parameter_extension}
 	If $D^{*}(G)\cong X_{2^m,\,2^d-1},$ then, for every integer $t\geq0$, $D^{*}\left(G\times\mathbb Z_2^{\,t}\right) \cong
 	X_{2^{m+t},\,2^d-1}.$ In terms of set of Realization groups, $[G]_{\cong}\in\mathcal R_{m,d} \Longrightarrow
 	[G\times\mathbb Z_2^{\,t}]_{\cong}
 	\in\mathcal R_{m+t,d}.$ This can be proved applying \autoref{elementary_extension} with $h=2^m$ and $r=2^d-1$.
 \end{remark}

 \begin{remark}\label{graph_tower}
 	Suppose that $X_{h,r}$ is realizable as $D^{*}(G)$ for some finite $2$-group $G$. Then every graph in the sequence $X_{h,r},\quad X_{2h,r},\quad X_{2^{2}h,r},\quad
 	X_{2^{3}h,r},\quad\ldots$ is realizable as a Proper POE Complement Graph of a finite $2$-group. The realization statement follows directly from
 	\autoref{elementary_extension}. More precisely, $D^{*}\left(G\times\mathbb Z_2^{\,t}\right) \cong X_{2^t h,\,r}$ for every integer $t\geq0$. By \autoref{X_h_r_basic}, $|V(X_{2^t h,r})|= (r+1)2^t h-1.$
 	These orders are distinct for distinct values of $t$, and hence the graphs are pairwise non-isomorphic. Finally, \autoref{X_h_r_basic} and \autoref{Xhr_spectrum} imply that every graph $X_{2^t h,r}$ is connected, regular, and integral. Moreover, these graphs are pairwise non-isomorphic connected regular integral graphs.
 \end{remark}
 
 \begin{remark}
 	Here is the computational results by SageMath for the groups of orders $8$ and $16$.
 	\begin{enumerate}
 		\item For groups of order $8$, computational verification shows that $D^{*}(G)$ is adjacency integral for $4$ of the $5$ groups. Among these, $2$ groups produce graphs belonging to the family $X_{h,r}$ considered in this paper. The remaining $2$ integral graphs are disconnected and do not belong to the family $X_{h,r}$.
 		\item For groups of order $16$, computational verification shows that $D^{*}(G)$ is adjacency integral for $7$ of the $14$ groups. Among these, $5$ groups produce graphs belonging to the family $X_{h,r}$. The remaining $2$ integral graphs are disconnected and lie outside the family $X_{h,r}$.
 		Thus, all connected integral cases of order $16$ are covered by $X_{h,r}$.
 	\end{enumerate}
 \end{remark}

 \begin{figure}[ht]
 	\centering
 	
 	\usetikzlibrary{fit,positioning,calc}
 	
 	\begin{tikzpicture}[
 		scale=0.78,
 		every node/.style={font=\scriptsize},
 		v/.style={
 			circle,
 			draw,
 			fill=white,
 			inner sep=0pt,
 			minimum size=6pt
 		},
 		box/.style={
 			draw,
 			rounded corners=2pt,
 			densely dotted,
 			inner sep=4pt
 		},
 		match/.style={thick},
 		joinedge/.style={
 			very thick,
 			line cap=round
 		},
 		arr/.style={->,thin}
 		]
 		
 		\begin{scope}[shift={(0,5.5)}]
 			
 			\node[v] (a1) at (-2.0,0.6) {};
 			\node[v] (a2) at (-2.0,0.0) {};
 			\node[v] (a3) at (-2.0,-0.6) {};
 			
 			\node[box,fit=(a1)(a2)(a3)] (A) {};
 			\node[above left=-1pt and 2pt of A.north west] {$H^*$};
 			
 			\node[v] (b1) at (1.1,0.5) {};
 			\node[v] (b2) at (2.0,0.5) {};
 			\node[v] (b3) at (1.1,-0.5) {};
 			\node[v] (b4) at (2.0,-0.5) {};
 			
 			\draw[match] (b1)--(b2);
 			\draw[match] (b3)--(b4);
 			
 			\node[box,fit=(b1)(b2)(b3)(b4)] (B) {};
 			\node[above=2pt of B.north] {$C_1$};
 			
 			\draw[joinedge,shorten >=1.5pt,shorten <=1.5pt]
 			(A.east)--(B.west);
 			
 			\node at (0,-1.45) {\large $X_{4,1}$};
 			
 		\end{scope}

 		\begin{scope}[shift={(9.5,5.5)}]
 			
 			\node[v] (c1) at (-2.4,0.6) {};
 			\node[v] (c2) at (-2.4,0.0) {};
 			\node[v] (c3) at (-2.4,-0.6) {};
 			
 			\node[box,fit=(c1)(c2)(c3)] (C) {};
 			\node[above left=-1pt and 2pt of C.north west] {$H^*$};
 			
 			\node[v] (d1) at (1.0,1.45) {};
 			\node[v] (d2) at (1.9,1.45) {};
 			\node[v] (d3) at (1.0,0.65) {};
 			\node[v] (d4) at (1.9,0.65) {};
 			
 			\draw[match] (d1)--(d2);
 			\draw[match] (d3)--(d4);
 			
 			\node[box,fit=(d1)(d2)(d3)(d4)] (D) {};
 			\node[above=2pt of D.north] {$C_1$};
 			
 			\node[v] (e1) at (1.0,-0.65) {};
 			\node[v] (e2) at (1.9,-0.65) {};
 			\node[v] (e3) at (1.0,-1.45) {};
 			\node[v] (e4) at (1.9,-1.45) {};
 			
 			\draw[match] (e1)--(e2);
 			\draw[match] (e3)--(e4);
 			
 			\node[box,fit=(e1)(e2)(e3)(e4)] (E) {};
 			\node[below=2pt of E.south] {$C_2$};
 			
 			\draw[joinedge,shorten >=1.5pt,shorten <=1.5pt]
 			(C.north east)--(D.west);
 			
 			\draw[joinedge,shorten >=1.5pt,shorten <=1.5pt]
 			(C.south east)--(E.west);
 			
 			\draw[joinedge,shorten >=1.5pt,shorten <=1.5pt]
 			(D.south)--(E.north);
 			
 			\node at (0,-2.35) {\large $X_{4,2}$};
 			
 		\end{scope}

 		\begin{scope}[shift={(0,-2.8)}]
 			
 			\node[v] (f1) at (-2.8,0.35) {};
 			\node[v] (f2) at (-2.25,0.35) {};
 			\node[v] (f3) at (-1.70,0.35) {};
 			\node[v] (f4) at (-1.15,0.35) {};
 			
 			\node[v] (f5) at (-2.55,-0.35) {};
 			\node[v] (f6) at (-2.00,-0.35) {};
 			\node[v] (f7) at (-1.45,-0.35) {};
 			
 			\node[
 			box,
 			fit=(f1)(f2)(f3)(f4)(f5)(f6)(f7)
 			] (F) {};
 			
 			\node[above left=-1pt and 2pt of F.north west] {$H^*$};
 			
 			\node[v] (g1) at (0.9,0.45) {};
 			\node[v] (g2) at (1.6,0.45) {};
 			\draw[match] (g1)--(g2);
 			
 			\node[v] (g3) at (2.2,0.45) {};
 			\node[v] (g4) at (2.9,0.45) {};
 			\draw[match] (g3)--(g4);
 			
 			\node[v] (g5) at (0.9,-0.45) {};
 			\node[v] (g6) at (1.6,-0.45) {};
 			\draw[match] (g5)--(g6);
 			
 			\node[v] (g7) at (2.2,-0.45) {};
 			\node[v] (g8) at (2.9,-0.45) {};
 			\draw[match] (g7)--(g8);
 			
 			\node[
 			box,
 			fit=(g1)(g2)(g3)(g4)(g5)(g6)(g7)(g8)
 			] (G) {};
 			
 			\node[above=2pt of G.north] {$C_1$};
 			
 			\draw[
 			joinedge,
 			shorten >=2.5pt,
 			shorten <=2.5pt
 			]
 			(F.east)--(G.west);
 			
 			\node at (0,-1.55) {\large $X_{8,1}$};
 			
 		\end{scope}

 		\begin{scope}[shift={(9.5,-2.8)}]
 			
 			\node[v] (h1) at (-3.0,0.35) {};
 			\node[v] (h2) at (-2.45,0.35) {};
 			\node[v] (h3) at (-1.90,0.35) {};
 			\node[v] (h4) at (-1.35,0.35) {};
 			
 			\node[v] (h5) at (-2.75,-0.35) {};
 			\node[v] (h6) at (-2.20,-0.35) {};
 			\node[v] (h7) at (-1.65,-0.35) {};
 			
 			\node[
 			box,
 			fit=(h1)(h2)(h3)(h4)(h5)(h6)(h7)
 			] (H) {};
 			
 			\node[above left=-1pt and 2pt of H.north west] {$H^*$};

 			\node[v] (i1) at (1.0,1.65) {};
 			\node[v] (i2) at (1.7,1.65) {};
 			\draw[match] (i1)--(i2);
 			
 			\node[v] (i3) at (2.3,1.65) {};
 			\node[v] (i4) at (3.0,1.65) {};
 			\draw[match] (i3)--(i4);
 			
 			\node[v] (i5) at (1.0,0.85) {};
 			\node[v] (i6) at (1.7,0.85) {};
 			\draw[match] (i5)--(i6);
 			
 			\node[v] (i7) at (2.3,0.85) {};
 			\node[v] (i8) at (3.0,0.85) {};
 			\draw[match] (i7)--(i8);
 			
 			\node[
 			box,
 			fit=(i1)(i2)(i3)(i4)(i5)(i6)(i7)(i8)
 			] (I) {};
 			
 			\node[above=2pt of I.north] {$C_1$};

 			\node[v] (j1) at (1.0,-0.85) {};
 			\node[v] (j2) at (1.7,-0.85) {};
 			\draw[match] (j1)--(j2);
 			
 			\node[v] (j3) at (2.3,-0.85) {};
 			\node[v] (j4) at (3.0,-0.85) {};
 			\draw[match] (j3)--(j4);
 			
 			\node[v] (j5) at (1.0,-1.65) {};
 			\node[v] (j6) at (1.7,-1.65) {};
 			\draw[match] (j5)--(j6);
 			
 			\node[v] (j7) at (2.3,-1.65) {};
 			\node[v] (j8) at (3.0,-1.65) {};
 			\draw[match] (j7)--(j8);
 			
 			\node[
 			box,
 			fit=(j1)(j2)(j3)(j4)(j5)(j6)(j7)(j8)
 			] (J) {};
 			
 			\node[below=2pt of J.south] {$C_2$};

 			
 			\draw[
 			joinedge,
 			shorten >=3pt,
 			shorten <=3pt
 			]
 			(H.north east)--(I.west);
 			
 			\draw[
 			joinedge,
 			shorten >=3pt,
 			shorten <=3pt
 			]
 			(H.south east)--(J.west);
 			
 			\draw[
 			joinedge,
 			shorten >=2.5pt,
 			shorten <=2.5pt
 			]
 			(I.south)--(J.north);
 			
 			\node at (0,-2.65) {\large $X_{8,2}$};
 			
 		\end{scope}

 		
 		\draw[arr]
 		(2.8,5.5)
 		--
 		node[above] {$r:1\mapsto 2$}
 		(6.7,5.5);
 		
 		\draw[arr]
 		($(G.east)+(0.40,0)$)
 		--
 		node[above] {$r:1\mapsto 2$}
 		($(H.west)+(-0.40,0)$);
 		
 		\draw[arr]
 		(0,3.35)
 		--
 		node[left] {$h:4\mapsto 8$}
 		(0,-1.20);
 		
 		\draw[arr]
 		(9.5,2.85)
 		--
 		node[right] {$h:4\mapsto 8$}
 		(9.5,-0.85);
 		
 	\end{tikzpicture}
 	
 	\caption{A specific example of the two-directional recursive
 		construction. Horizontally, the parameter $r$ increases from
 		$1$ to $2$, while vertically the parameter $h$ increases from
 		$4$ to $8$. A bold line between two dotted boxes represents the
 		complete join between the corresponding parts.}
 	\label{recursive_example}
 \end{figure}

 \begin{figure}[ht]
 	\centering
 	
 	\usetikzlibrary{fit,calc}
 	
 	\begin{tikzpicture}[
 		scale=0.95,
 		vertex/.style={
 			circle,
 			draw=black,
 			fill=white,
 			minimum size=6mm,
 			inner sep=0.5pt,
 			font=\scriptsize,
 			line width=0.7pt
 		},
 		crossedge/.style={
 			draw=black!55,
 			line width=0.35pt
 		},
 		matchedge/.style={
 			draw=black,
 			line width=1.8pt,
 			line cap=round,
 			preaction={
 				draw=white,
 				line width=4pt
 			}
 		},
 		partbox/.style={
 			draw=black,
 			densely dotted,
 			rounded corners=3pt,
 			inner sep=7pt
 		}
 		]
 		
 		
 		\node[vertex] (a)    at (-4.5, 3.0) {$a$};
 		\node[vertex] (b)    at (-4.5, 2.0) {$b$};
 		\node[vertex] (ab)   at (-4.5, 1.0) {$ab$};
 		\node[vertex] (g2)   at (-4.5, 0.0) {$g^2$};
 		\node[vertex] (ag2)  at (-4.5,-1.0) {$ag^2$};
 		\node[vertex] (bg2)  at (-4.5,-2.0) {$bg^2$};
 		\node[vertex] (abg2) at (-4.5,-3.0) {$abg^2$};
 		
 		\node[
 		partbox,
 		fit=(a)(b)(ab)(g2)(ag2)(bg2)(abg2)
 		] (Hbox) {};
 		
 		\node[font=\small]
 		at ($(Hbox.north)+(0,0.48)$)
 		{$H(G)\setminus\{e\}\cong\overline K_7$};

 		%
 		
 		\node[vertex] (g)  at (3.0, 3.5) {$g$};
 		\node[vertex] (g3) at (3.0, 2.5) {$g^3$};
 		
 		\node[vertex] (ag)  at (3.0, 1.5) {$ag$};
 		\node[vertex] (bg3) at (3.0, 0.5) {$bg^3$};
 		
 		\node[vertex] (bg)  at (3.0,-0.5) {$bg$};
 		\node[vertex] (ag3) at (3.0,-1.5) {$ag^3$};
 		
 		\node[vertex] (abg)  at (3.0,-2.5) {$abg$};
 		\node[vertex] (abg3) at (3.0,-3.5) {$abg^3$};

 		\node[
 		partbox,
 		fit=(g)(g3)(ag)(bg3)(bg)(ag3)(abg)(abg3)
 		] (Cbox) {};
 		
 		\node[font=\small]
 		at ($(Cbox.north)+(0,0.48)$)
 		{$G\setminus H(G)$};

 		
 		\foreach \u in {a,b,ab,g2,ag2,bg2,abg2}{
 			\foreach \v in {g,g3,ag,bg3,bg,ag3,abg,abg3}{
 				\draw[crossedge] (\u)--(\v);
 			}
 		}

 		
 		\draw[matchedge] (g)--(g3);
 		\draw[matchedge] (ag)--(bg3);
 		\draw[matchedge] (bg)--(ag3);
 		\draw[matchedge] (abg)--(abg3);

 		
 		\node[font=\small]
 		at ($(Cbox.east)+(0.75,0)$)
 		{$4K_2$};
 		
 	\end{tikzpicture}
 	
 	\caption{The Proper POE Complement Graph of
 		$G=\mathbb Z_2^2\rtimes_\theta\mathbb Z_4$.
 		Here,
 		$D^*(G)\cong X_{8,1}
 		=\overline K_7\vee4K_2$.
 		Every vertex of $H(G)\setminus\{e\}$ is adjacent to every vertex
 		of $G\setminus H(G)$, while the four thick vertical edges,
 		arranged one below another, form the matching $4K_2$.}
 	
 	\label{order16-full}
 \end{figure}
 
 
 \newcommand{\XonepicCombined}[2]{%
 	\begin{tikzpicture}[
 		scale=0.52,
 		partbox/.style={
 			draw,
 			rounded corners=2pt,
 			minimum width=1.15cm,
 			minimum height=0.50cm,
 			font=\scriptsize
 		},
 		joinline/.style={very thick}
 		]
 		\node[partbox] (A) at (0,0) {$\overline K_{#1-1}$};
 		\node[partbox] (B) at (1.9,0) {$#2$};
 		\draw[joinline] (A)--(B);
 	\end{tikzpicture}%
 }

 \newcommand{\XtwopicCombined}[2]{%
 	\begin{tikzpicture}[
 		scale=0.50,
 		partbox/.style={
 			draw,
 			rounded corners=2pt,
 			minimum width=1.05cm,
 			minimum height=0.48cm,
 			font=\scriptsize
 		},
 		joinline/.style={very thick}
 		]
 		\node[partbox] (A) at (0,0.9) {$\overline K_{#1-1}$};
 		\node[partbox] (B) at (-1.0,-0.55) {$#2$};
 		\node[partbox] (C) at (1.0,-0.55) {$#2$};
 		
 		\draw[joinline] (A)--(B);
 		\draw[joinline] (A)--(C);
 		\draw[joinline] (B)--(C);
 	\end{tikzpicture}%
 }

 \newcommand{\XthreepicCombined}[2]{%
 	\begin{tikzpicture}[
 		scale=0.48,
 		partbox/.style={
 			draw,
 			rounded corners=2pt,
 			minimum width=1.00cm,
 			minimum height=0.46cm,
 			font=\scriptsize
 		},
 		joinline/.style={very thick}
 		]
 		\node[partbox] (A) at (0,1.0) {$\overline K_{#1-1}$};
 		\node[partbox] (B) at (-1.05,-0.35) {$#2$};
 		\node[partbox] (C) at (1.05,-0.35) {$#2$};
 		\node[partbox] (D) at (0,-1.55) {$#2$};
 		
 		\draw[joinline] (A)--(B);
 		\draw[joinline] (A)--(C);
 		\draw[joinline] (A)--(D);
 		\draw[joinline] (B)--(C);
 		\draw[joinline] (B)--(D);
 		\draw[joinline] (C)--(D);
 	\end{tikzpicture}%
 }

 
 \newcommand{\XonePage}[2]{%
 	\begin{tikzpicture}[
 		scale=0.72,
 		partbox/.style={
 			draw,
 			rounded corners=3pt,
 			minimum width=1.7cm,
 			minimum height=0.72cm,
 			font=\small
 		},
 		joinline/.style={very thick}
 		]
 		\node[partbox] (A) at (0,0) {$\overline K_{#1-1}$};
 		\node[partbox] (B) at (2.8,0) {$#2$};
 		\draw[joinline] (A)--(B);
 	\end{tikzpicture}%
 }
 
 \newcommand{\XtwoPage}[2]{%
 	\begin{tikzpicture}[
 		scale=0.68,
 		partbox/.style={
 			draw,
 			rounded corners=3pt,
 			minimum width=1.45cm,
 			minimum height=0.68cm,
 			font=\small
 		},
 		joinline/.style={very thick}
 		]
 		\node[partbox] (A) at (0,1.15) {$\overline K_{#1-1}$};
 		\node[partbox] (B) at (-1.45,-0.7) {$#2$};
 		\node[partbox] (C) at ( 1.45,-0.7) {$#2$};
 		
 		\draw[joinline] (A)--(B);
 		\draw[joinline] (A)--(C);
 		\draw[joinline] (B)--(C);
 	\end{tikzpicture}%
 }
 
 \newcommand{\XthreePage}[2]{%
 	\begin{tikzpicture}[
 		scale=0.63,
 		partbox/.style={
 			draw,
 			rounded corners=3pt,
 			minimum width=1.35cm,
 			minimum height=0.64cm,
 			font=\small
 		},
 		joinline/.style={very thick}
 		]
 		\node[partbox] (A) at (0,1.2) {$\overline K_{#1-1}$};
 		\node[partbox] (B) at (-1.4,-0.45) {$#2$};
 		\node[partbox] (C) at ( 1.4,-0.45) {$#2$};
 		\node[partbox] (D) at (0,-2.0) {$#2$};
 		
 		\draw[joinline] (A)--(B);
 		\draw[joinline] (A)--(C);
 		\draw[joinline] (A)--(D);
 		\draw[joinline] (B)--(C);
 		\draw[joinline] (B)--(D);
 		\draw[joinline] (C)--(D);
 	\end{tikzpicture}%
 }
 
 
 \newcommand{\XoneFit}[2]{%
 	\begin{tikzpicture}[
 		partbox/.style={
 			draw,
 			rounded corners=3pt,
 			minimum width=1.65cm,
 			minimum height=0.72cm,
 			font=\small
 		},
 		joinline/.style={very thick}
 		]
 		\node[partbox] (A) at (0,0) {$\overline K_{#1-1}$};
 		\node[partbox] (B) at (2.6,0) {$#2$};
 		\draw[joinline] (A)--(B);
 	\end{tikzpicture}%
 }

 \newcommand{\XtwoFit}[2]{%
 	\begin{tikzpicture}[
 		partbox/.style={
 			draw,
 			rounded corners=3pt,
 			minimum width=1.55cm,
 			minimum height=0.70cm,
 			font=\small
 		},
 		joinline/.style={very thick}
 		]
 		\node[partbox] (A) at (0,1.15) {$\overline K_{#1-1}$};
 		\node[partbox] (B) at (-1.35,-0.55) {$#2$};
 		\node[partbox] (C) at ( 1.35,-0.55) {$#2$};
 		
 		\draw[joinline] (A)--(B);
 		\draw[joinline] (A)--(C);
 		\draw[joinline] (B)--(C);
 	\end{tikzpicture}%
 }

 \newcommand{\XthreeFit}[2]{%
 	\begin{tikzpicture}[
 		partbox/.style={
 			draw,
 			rounded corners=3pt,
 			minimum width=1.50cm,
 			minimum height=0.68cm,
 			font=\small
 		},
 		joinline/.style={very thick}
 		]
 		\node[partbox] (A) at (0,1.25) {$\overline K_{#1-1}$};
 		
 		\node[partbox] (B) at (-1.35,-0.35) {$#2$};
 		\node[partbox] (C) at ( 1.35,-0.35) {$#2$};
 		\node[partbox] (D) at (0,-1.75) {$#2$};
 		
 		\draw[joinline] (A)--(B);
 		\draw[joinline] (A)--(C);
 		\draw[joinline] (A)--(D);
 		\draw[joinline] (B)--(C);
 		\draw[joinline] (B)--(D);
 		\draw[joinline] (C)--(D);
 	\end{tikzpicture}%
 }

 
 \tikzset{
 	Xpart/.style={
 		draw=black,
 		rounded corners=2pt,
 		minimum width=1.35cm,
 		minimum height=0.58cm,
 		font=\scriptsize,
 		fill=white
 	},
 	Xjoin/.style={
 		draw=black,
 		line width=1.2pt
 	}
 }
 
 \newcommand{\XoneFullPage}[2]{%
 	\begin{tikzpicture}[baseline=(current bounding box.center)]
 		
 		\node[Xpart] (A) at (0,0)
 		{$\overline K_{#1-1}$};
 		
 		\node[Xpart] (B) at (2.15,0)
 		{$#2$};
 		
 		\draw[Xjoin] (A)--(B);
 		
 	\end{tikzpicture}%
 }
 
 \newcommand{\XtwoFullPage}[2]{%
 	\begin{tikzpicture}[baseline=(current bounding box.center)]
 		
 		\node[Xpart] (A) at (0,1.05)
 		{$\overline K_{#1-1}$};
 		
 		\node[Xpart] (B) at (-1.15,-0.65)
 		{$#2$};
 		
 		\node[Xpart] (C) at (1.15,-0.65)
 		{$#2$};
 		
 		\draw[Xjoin] (A)--(B);
 		\draw[Xjoin] (A)--(C);
 		\draw[Xjoin] (B)--(C);
 		
 	\end{tikzpicture}%
 }
 
 \newcommand{\XthreeFullPage}[2]{%
 	\begin{tikzpicture}[baseline=(current bounding box.center)]
 		
 		\node[Xpart] (A) at (0,1.25)
 		{$\overline K_{#1-1}$};
 		
 		\node[Xpart] (B) at (-1.25,0)
 		{$#2$};
 		
 		\node[Xpart] (C) at (1.25,0)
 		{$#2$};
 		
 		\node[Xpart] (D) at (0,-1.35)
 		{$#2$};
 		
 		\draw[Xjoin] (A)--(B);
 		\draw[Xjoin] (A)--(C);
 		\draw[Xjoin] (A)--(D);
 		\draw[Xjoin] (B)--(C);
 		\draw[Xjoin] (B)--(D);
 		\draw[Xjoin] (C)--(D);
 		
 	\end{tikzpicture}%
 }
 
 \newcommand{\XfourFullPage}[2]{%
 	\begin{tikzpicture}[baseline=(current bounding box.center)]
 		
 		\node[Xpart] (A) at (0,1.60)
 		{$\overline K_{#1-1}$};
 		
 		\node[Xpart] (B) at (-1.55,0.45)
 		{$#2$};
 		
 		\node[Xpart] (C) at (-0.95,-1.25)
 		{$#2$};
 		
 		\node[Xpart] (D) at (0.95,-1.25)
 		{$#2$};
 		
 		\node[Xpart] (E) at (1.55,0.45)
 		{$#2$};
 		
 		
 		\draw[Xjoin] (A)--(B);
 		\draw[Xjoin] (A)--(C);
 		\draw[Xjoin] (A)--(D);
 		\draw[Xjoin] (A)--(E);
 		
 		\draw[Xjoin] (B)--(C);
 		\draw[Xjoin] (B)--(D);
 		\draw[Xjoin] (B)--(E);
 		
 		\draw[Xjoin] (C)--(D);
 		\draw[Xjoin] (C)--(E);
 		
 		\draw[Xjoin] (D)--(E);
 		
 	\end{tikzpicture}%
 }

 \begin{figure}[p]
 	\centering
 	
 	\begin{adjustbox}{
 			max totalsize={0.98\textwidth}{0.84\textheight},
 			center
 		}
 		
 		\setlength{\tabcolsep}{5pt}
 		\renewcommand{\arraystretch}{1.15}
 		
 		\begin{tabular}{c|c c c c}
 			
 			&
 			\textbf{$h=2$}
 			&
 			\textbf{$h=4$}
 			&
 			\textbf{$h=8$}
 			&
 			\textbf{$h=16$}
 			\\[2mm]
 			
 			\hline
 			\\[-1mm]
 			
 			
 			\textbf{$r=1$}
 			&
 			\begin{tabular}{c}
 				\XoneFullPage{2}{K_2}\\[1mm]
 				{\small $X_{2,1}$}
 			\end{tabular}
 			&
 			\begin{tabular}{c}
 				\XoneFullPage{4}{2K_2}\\[1mm]
 				{\small $X_{4,1}$}
 			\end{tabular}
 			&
 			\begin{tabular}{c}
 				\XoneFullPage{8}{4K_2}\\[1mm]
 				{\small $X_{8,1}$}
 			\end{tabular}
 			&
 			\begin{tabular}{c}
 				\XoneFullPage{16}{8K_2}\\[1mm]
 				{\small $X_{16,1}$}
 			\end{tabular}
 			
 			\\[5mm]
 			
 			
 			\textbf{$r=2$}
 			&
 			\begin{tabular}{c}
 				\XtwoFullPage{2}{K_2}\\[1mm]
 				{\small $X_{2,2}$}
 			\end{tabular}
 			&
 			\begin{tabular}{c}
 				\XtwoFullPage{4}{2K_2}\\[1mm]
 				{\small $X_{4,2}$}
 			\end{tabular}
 			&
 			\begin{tabular}{c}
 				\XtwoFullPage{8}{4K_2}\\[1mm]
 				{\small $X_{8,2}$}
 			\end{tabular}
 			&
 			\begin{tabular}{c}
 				\XtwoFullPage{16}{8K_2}\\[1mm]
 				{\small $X_{16,2}$}
 			\end{tabular}
 			
 			\\[5mm]
 			
 			
 			\textbf{$r=3$}
 			&
 			\begin{tabular}{c}
 				\XthreeFullPage{2}{K_2}\\[1mm]
 				{\small $X_{2,3}$}
 			\end{tabular}
 			&
 			\begin{tabular}{c}
 				\XthreeFullPage{4}{2K_2}\\[1mm]
 				{\small $X_{4,3}$}
 			\end{tabular}
 			&
 			\begin{tabular}{c}
 				\XthreeFullPage{8}{4K_2}\\[1mm]
 				{\small $X_{8,3}$}
 			\end{tabular}
 			&
 			\begin{tabular}{c}
 				\XthreeFullPage{16}{8K_2}\\[1mm]
 				{\small $X_{16,3}$}
 			\end{tabular}
 			
 			\\[5mm]
 			
 			
 			\textbf{$r=4$}
 			&
 			\begin{tabular}{c}
 				\XfourFullPage{2}{K_2}\\[1mm]
 				{\small $X_{2,4}$}
 			\end{tabular}
 			&
 			\begin{tabular}{c}
 				\XfourFullPage{4}{2K_2}\\[1mm]
 				{\small $X_{4,4}$}
 			\end{tabular}
 			&
 			\begin{tabular}{c}
 				\XfourFullPage{8}{4K_2}\\[1mm]
 				{\small $X_{8,4}$}
 			\end{tabular}
 			&
 			\begin{tabular}{c}
 				\XfourFullPage{16}{8K_2}\\[1mm]
 				{\small $X_{16,4}$}
 			\end{tabular}
 			
 		\end{tabular}
 		
 	\end{adjustbox}
 	
 	\vspace{3mm}
 	
 	\caption{Examples from the scalable family $X_{h,r}$ for
 		$h\in\{2,4,8,16\}$ and $r\in\{1,2,3,4\}$.
 		The columns correspond to fixed values of $h$, while the rows
 		correspond to fixed values of $r$. Each box represents one
 		part of the graph, and every thick line between two boxes
 		indicates that the corresponding parts are completely joined.}
 	
 	\label{all-scalable-family}
 	
 \end{figure}

\end{document}